%%%%%%%%%%%%%%%%%%%%%%%%%%%%%%%%%%%%%%%%%%%%%%%%%%%%%%%%%%%%%%%%%%%%%
%%                                                                 %%
%% Please do not use \input{...} to include other tex files.       %%
%% Submit your LaTeX manuscript as one .tex document.              %%
%%                                                                 %%
%% All additional figures and files should be attached             %%
%% separately and not embedded in the \TeX\ document itself.       %%
%%                                                                 %%
%%%%%%%%%%%%%%%%%%%%%%%%%%%%%%%%%%%%%%%%%%%%%%%%%%%%%%%%%%%%%%%%%%%%%

%%\documentclass[referee,sn-basic]{sn-jnl}% referee option is meant for double line spacing

%%=======================================================%%
%% to print line numbers in the margin use lineno option %%
%%=======================================================%%

%%\documentclass[lineno,sn-basic]{sn-jnl}% Basic Springer Nature Reference Style/Chemistry Reference Style

%%======================================================%%
%% to compile with pdflatex/xelatex use pdflatex option %%
%%======================================================%%

%%\documentclass[pdflatex,sn-basic]{sn-jnl}% Basic Springer Nature Reference Style/Chemistry Reference Style

%%\documentclass[sn-basic]{sn-jnl}% Basic Springer Nature Reference Style/Chemistry Reference Style
\documentclass[sn-mathphys]{sn-jnl}% Math and Physical Sciences Reference Style
%%\documentclass[sn-aps]{sn-jnl}% American Physical Society (APS) Reference Style
%%\documentclass[sn-vancouver]{sn-jnl}% Vancouver Reference Style
%%\documentclass[sn-apa]{sn-jnl}% APA Reference Style
%%\documentclass[sn-chicago]{sn-jnl}% Chicago-based Humanities Reference Style
%%\documentclass[sn-standardnature]{sn-jnl}% Standard Nature Portfolio Reference Style
%%\documentclass[default]{sn-jnl}% Default
%%\documentclass[default,iicol]{sn-jnl}% Default with double column layout

%%%% Standard Packages
%%<additional latex packages if required can be included here>
%%%%

%%%%%=============================================================================%%%%
%%%%  Remarks: This template is provided to aid authors with the preparation
%%%%  of original research articles intended for submission to journals published 
%%%%  by Springer Nature. The guidance has been prepared in partnership with 
%%%%  production teams to conform to Springer Nature technical requirements. 
%%%%  Editorial and presentation requirements differ among journal portfolios and 
%%%%  research disciplines. You may find sections in this template are irrelevant 
%%%%  to your work and are empowered to omit any such section if allowed by the 
%%%%  journal you intend to submit to. The submission guidelines and policies 
%%%%  of the journal take precedence. A detailed User Manual is available in the 
%%%%  template package for technical guidance.
%%%%%=============================================================================%%%%

\jyear{2021}%

%% as per the requirement new theorem styles can be included as shown below
\theoremstyle{thmstyleone}%
\usepackage{subfigure}
\newtheorem{theorem}{Theorem}%  meant for continuous numbers
%%\newtheorem{theorem}{Theorem}[section]% meant for sectionwise numbers
%% optional argument [theorem] produces theorem numbering sequence instead of independent numbers for Proposition
\newtheorem{proposition}[theorem]{Proposition}% 
\newtheorem{lemma}{Lemma}%
\newtheorem{corollary}{Corollary}%

\theoremstyle{thmstyletwo}%
\newtheorem{example}{Example}%
\newtheorem{remark}{Remark}%

\theoremstyle{thmstylethree}%
\newtheorem{definition}{Definition}%

\raggedbottom
%%\unnumbered% uncomment this for unnumbered level heads

\usepackage{algorithm,algpseudocode,float}
\usepackage{lipsum}

\makeatletter
\newenvironment{breakablealgorithm}
  {% \begin{breakablealgorithm}
   %\begin{center}
     \refstepcounter{algorithm}% New algorithm
     \hrule height.8pt depth0pt \kern2pt% \@fs@pre for \@fs@ruled
     \renewcommand{\caption}[2][\relax]{% Make a new \caption
       {\raggedright\textbf{\ALG@name~\thealgorithm} ##2\par}%
       \ifx\relax##1\relax % #1 is \relax
         \addcontentsline{loa}{algorithm}{\protect\numberline{\thealgorithm}##2}%
       \else % #1 is not \relax
         \addcontentsline{loa}{algorithm}{\protect\numberline{\thealgorithm}##1}%
       \fi
       \kern2pt\hrule\kern2pt
     }
  }{% \end{breakablealgorithm}
     \kern2pt\hrule\relax% \@fs@post for \@fs@ruled
   %\end{center}
  }
\makeatother

\begin{document}

\title[On sketch-and-project methods for solving tensor equations]{On sketch-and-project methods for solving tensor equations}

%%=============================================================%%
%% Prefix	-> \pfx{Dr}
%% GivenName	-> \fnm{Joergen W.}
%% Particle	-> \spfx{van der} -> surname prefix
%% FamilyName	-> \sur{Ploeg}
%% Suffix	-> \sfx{IV}
%% NatureName	-> \tanm{Poet Laureate} -> Title after name
%% Degrees	-> \dgr{MSc, PhD}
%% \author*[1,2]{\pfx{Dr} \fnm{Joergen W.} \spfx{van der} \sur{Ploeg} \sfx{IV} \tanm{Poet Laureate} 
%%                 \dgr{MSc, PhD}}\email{iauthor@gmail.com}
%%=============================================================%%

\author[1]{\fnm{Ling} \sur{Tang}}\email{ltang@cqu.edu.cn}

\author[1]{\fnm{Yanjun} \sur{Zhang}}\email{yjzhang@cqu.edu.cn}
%\equalcont{These authors contributed equally to this work.}

\author*[1]{\fnm{Hanyu} \sur{Li}}\email{lihy.hy@gmail.com or hyli@cqu.edu.cn}
%\equalcont{These authors contributed equally to this work.}

\affil[1]{\orgdiv{College of Mathematics and Statistics}, \orgname{Chongqing University}, \orgaddress{ \city{Chongqing}, \postcode{401331},  \country{P.R. China}}}

%\affil[2]{\orgdiv{Department}, \orgname{Organization}, \orgaddress{\street{Street}, \city{City}, \postcode{10587}, \state{State}, \country{Country}}}

%\affil[3]{\orgdiv{Department}, \orgname{Organization}, \orgaddress{\street{Street}, \city{City}, \postcode{610101}, \state{State}, \country{Country}}}

%%==================================%%
%% sample for unstructured abstract %%
%%==================================%%

\abstract{We ﬁrst propose the regular sketch-and-project method for solving tensor equations with respect to the popular t-product. Then, three adaptive sampling strategies and three corresponding adaptive sketch-and-project methods are derived. We prove that all the proposed methods have linear convergence in expectation. Furthermore, we investigate the Fourier domain versions and some special cases of the new methods, where the latter corresponds to some existing matrix equation methods. Finally, numerical experiments are presented to demonstrate and test the feasibility and effectiveness of the proposed methods for solving tensor equations.}

\keywords{Sketch-and-project, Tensor equation, T-product, Adaptive sampling, Fourier domain}

%%\pacs[JEL Classification]{D8, H51}

\pacs[MSC Classification]{65F10, 68W20, 15A69, 15A24}

\maketitle

\section{Introduction}\label{Introduction}

Given the third-order tensors $\mathcal{A}\in\mathbb{R}^{m\times r\times l}$, $\mathcal{B}\in\mathbb{R}^{s\times n \times l}$, and $\mathcal{C}\in\mathbb{R}^{m\times n \times l}$, we consider the following consistent  linear tensor equation
\begin{align}\label{tensorequation}
	\mathcal{A}*\mathcal{X}*\mathcal{B}=\mathcal{C},
\end{align}
where the operator $*$ denotes the t-product introduced by Kilmer and Martin in \cite{kilmer2011factorization} and has been proved to be extremely useful in a variety of  fields, including image and signal processing \cite{kilmer2013third, soltani2016tensor, tarzanagh2018fast}, computer vision \cite{xie2018unifying, yin2018multiview}, data denoising \cite{zhang2018nonlocal}, low-rank tensor completion \cite{semerci2014tensor,  zhang2014novel, zhang2016exact, zhou2017tensor}, and robust tensor PCA \cite{liu2018improved}, among others. One main reason may be that, with t-product, many properties of numerical linear algebra can be extended to third and high order tensors; see \cite{braman2010third, jin2017generalized, lund2020tensor, miao2020generalized, miao2021t,  zheng2021t, qi2021t}. We will go through the fundamentals of t-product in Section \ref{Na_Pre}.

%The advantage of the t-product over other products of tensors, such as the n-mode and Einstein product, is that it avoids the loss of information inherent in the flattening of a tensor.

The tensor equation (\ref{tensorequation}) is widely and heavily used in tensor low-rank approximation and decomposition \cite{tarzanagh2018fast}, statistical models \cite{jin2017generalized}, and so on. At present, there are also some work on the computation of this equation. For example, Jin et al. \cite{jin2017generalized} gave a solvability condition and a general solution to (\ref{tensorequation}); El Guide et al. \cite{el2021tensor} generalized the GMRES and Golub-Kahan bidiagonalization methods to solve discrete-ill-posed tensor equations.
Note that, in \cite{el2021tensor}, the authors mainly discussed a special case of (\ref{tensorequation}), i.e., the case of  $\mathcal{A}$ and $\mathcal{B}$ being square tensors. In this paper, we aim to consider the stochastic iterative methods  deeply influenced by the philosophy of the famous Kaczmarz method for solving the general linear tensor equation (\ref{tensorequation}). 

The tensor equation (\ref{tensorequation}) can be regarded as a generalization of the linear matrix equation, that is, the special case for $l=1$ in  (\ref{tensorequation}), whose computation has been dealt with  by some Kaczmarz-type methods. Specifically, Wu et al. \cite{wu2022kaczmarz} presented the randomized Kaczmarz method and combined it with the relaxed greedy selection strategies;  Niu and Zheng  \cite{niu2022global} proposed the randomized block Kaczmarz method and  randomized average block Kaczmarz method. Furthermore, Du et al. \cite{du2022convergence} proposed the randomized block coordinate descent methods for solving the matrix least-squares problem $\min_{X\in\mathbb{R}^{r\times s}}\|C-AXB\|_{F}$, where $\|\cdot\|_{F}$ denotes the Frobenius norm of a matrix. As we know, the randomized Kaczmarz-type and coordinate descent methods can be unified into the sketch-and-project method and its adaptive variants \cite{gower2015randomized,gower2019adaptive,tang2022sketch}. So, more specifically, the stochastic iterative methods we consider for solving (\ref{tensorequation}) in this paper are the sketch-and-project methods. We first propose the regular  sketch-and-project method % to solve  (\ref{tensorequation}) 
and dub it TESP for short. Then, to further improve the convergence rate, we derive three adaptive sampling strategies and propose the corresponding adaptive variants of the TESP method. %The convergence analyses of all the proposed methods are given. 
Meanwhile, we also discuss the Fourier domain versions and some special cases of the proposed methods. %analyze the corresponding convergence. Numerical experiments show the applicability and efficiency of our proposed methods in solving the tensor equation (\ref{tensorequation}) and the adaptive methods converges faster than the nonadaptive ones.

The remainder of this paper is organized as follows. Section \ref{Na_Pre} introduces some necessary notation and preliminaries. In Section \ref{subTESP}, we present the TESP method and its adaptive variants, and establish their convergence theories. The implementation of the proposed methods in the Fourier domain is described in Section \ref{subTESPF}. In Section \ref{subspecialcase}, we discuss some special cases of the TESP methods.  Section \ref{subexperi} is  devoted to numerical experiments to 
test our methods. Finally, we give the conclusion of the whole paper. The detailed proofs of main lemmas and theorems are provided in the appendix, along with an additional algorithm.

\section{Notation and preliminaries}\label{Na_Pre}
%\subsection{Notation}
  Throughout this work, scalars, vectors, matrices, and tensors are denoted by lowercase letters, e.g., $a$, boldface lowercase letters, e.g., $\mathbf{a}$, capital letters, e.g., $A$, and calligraphic letters, e.g., $\mathcal{A}$, respectively. For a positive integer $m$, let $[m]:=\{1,\cdots,m\}$.

For a  matrix $A\in \mathbb{R}^{m\times n}$, %\textbf{Rank}($A$),  \textbf{Trace}($A$), and
\textbf{Range}($A$)  denotes its %rank, trace, and
column space. %, respectively. 
 When $A$ is square, $\lambda_{\max}(A)$, $\lambda_{\min}(A)$, and $\lambda_{\min}^{+}(A)$ represent its largest eigenvalue, smallest eigenvalue, and smallest positive eigenvalue, respectively.

For a third-order tensor $\mathcal{A}\in\mathbb{R}^{m\times n\times l}$, its $(i,j,k)$-th element is denoted as $\mathcal{A}_{(i,j,k)}$. Fibers of $\mathcal{A}$ are defined by fixing two indices, and its $(j,k)$-th column, $(i,k)$-th row and $(i,j)$-th tube fibers are denoted by $\mathcal{A}_{(:,j,k)}$, $\mathcal{A}_{(i,:,k)}$ and $\mathcal{A}_{(i,j,:)}$, respectively. Slices of $\mathcal{A}$ are defined by fixing one index, and its $i$-th horizontal, $j$-th lateral and $k$-th frontal slices are denoted by $\mathcal{A}_{(i,:,:)}$, $\mathcal{A}_{(:,j,:)}$ and $\mathcal{A}_{(:,:,k)}$, respectively. For convenience, the frontal slice $\mathcal{A}_{(:,:,k)}$ is written as $\mathcal{A}_{(k)}$, and $(\mathcal{A}_{(i,:,:)})^{T}$, $(\mathcal{A}_{(:,j,:)})^{T}$ and $(\mathcal{A}_{(k)})^{T}$ are simply denoted  as $\mathcal{A}_{(i,:,:)}^{T}$, $\mathcal{A}_{(:,j,:)}^{T}$ and $\mathcal{A}_{(k)}^{T}$, respectively.

 In this paper, we also refer to third-order tensors as tubal matrices. The details are described in the following definition.
 \begin{definition}[\cite{kilmer2013third}]\rm
 	An element $\mathbf{a}\in \mathbb{R}^{1\times 1\times l}$ is called a tubal scalar of length $l$ and the set consisting of all tubal scalars of length $l$ is denoted by $\mathbb{K}_{l}$; an element $\overrightarrow{\mathcal{A}} \in \mathbb{R}^{m\times 1\times l}$ %then $\mathcal{X}$ 
 	is  called a vector of tubal scalars of length $l$ with size $m$ and the corresponding set is denoted by $\mathbb{K}^{m}_{l}$; 
 	an element $\mathcal{A} \in \mathbb{R}^{m\times n\times l}$ is  called a matrix of tubal scalars of length $l$ with size $m\times n$ and the corresponding set is denoted by $\mathbb{K}^{m\times n}_{l}$.
 \end{definition}

Now, we introduce the definition of the t-product. 
\begin{definition}[t-product \cite{kilmer2011factorization}] \rm
	Given $\mathcal{A}\in \mathbb{K}^{m\times n}_{l}$ and $\mathcal{B}\in \mathbb{K}^{n\times r}_{l}$, the t-product $\mathcal{A}*\mathcal{B}\in \mathbb{K}^{m\times r}_{l}$ is defined as
	$$\mathcal{A}*\mathcal{B}=\text{fold}(\text{bcirc}(\mathcal{A})\text{unfold}(\mathcal{B})),$$
	where 
	$$\text{bcirc}(\mathcal{A}):=
	\begin{bmatrix}
		\mathcal{A}_{(1)} & \mathcal{A}_{(l)} & \cdots & \mathcal{A}_{(2)}\\
		\mathcal{A}_{(2)} & \mathcal{A}_{(1)} & \cdots & \mathcal{A}_{(3)}  \\
		\vdots & \vdots & \ddots & \vdots \\
		\mathcal{A}_{(l)} & \mathcal{A}_{(l-1)} & \cdots & \mathcal{A}_{(1)} \\
	\end{bmatrix}
	,~~\text{unfold}(\mathcal{A}):=\begin{bmatrix}
		\mathcal{A}_{(1)} \\
		\mathcal{A}_{(2)} \\
		\vdots \\
		\mathcal{A}_{(l)}
	\end{bmatrix},$$
 and  $\text{fold}(\text{unfold}(\mathcal{A})):=\mathcal{A}$.
\end{definition}

%In addition, we use $\text{bcirc}^{-1}(\cdot)$ to denote the inverse operation of $\text{bcirc}(\cdot)$, i.e., $\text{bcirc}^{-1}(\text{bcirc}(\mathcal{A})):=\mathcal{A}$. For $\mathcal{A}$, by using the Matlab commands $\texttt{fft}$ and $\texttt{ifft}$,  we denote by $\widehat{\mathcal{ A}}$ the result obtained by applying the discrete Fourier transform (DFT) on $\mathcal{ A}$ along the third dimension, i.e., $\widehat{\mathcal{ A}}=\texttt{fft}(\mathcal{ A},[~],3)$. And we can compute $\mathcal{ A}$ from $\widehat{\mathcal{ A}}$ via $\mathcal{ A}=\texttt{ifft}(\widehat{\mathcal{ A}},[~],3)$. In particular, define
%\begin{align}\label{sec2e1}
%	\text{bdiag}(\widehat{\mathcal{A}}):=\begin{bmatrix}
	%	\widehat{\mathcal{ A}}_{(1)} &   &   &   \\
	%	& \widehat{\mathcal{ A}}_{(2)} &   &   \\
	%	&   & \ddots &   \\
%		&   &   &  \widehat{\mathcal{ A}}_{(l)} \\
%	\end{bmatrix},
%\end{align}
%which is the block diagonal matrix with each block corresponding to the frontal slices of the tubal matrix $\widehat{\mathcal{ A}}$.Considering that the t-product is actually the circular convolution which 
Using the Matlab commands $\texttt{fft}$ and $\texttt{ifft}$, the t-product can be computed by the discrete Fourier transform (DFT), %we can compute it in the Fourier domain, 
as shown in Algorithm \ref{t-product}.
	\begin{algorithm}[htbp]
	\caption{t-product $\mathcal{C}=\mathcal{ A}*\mathcal{ B}$ in the Fourier domain \cite{kilmer2011factorization}} \label{t-product}
	\begin{algorithmic}[1]
 \State  \textbf{Input:} $\mathcal{A}\in \mathbb{K}^{m\times n }_{l}$, $\mathcal{B}\in \mathbb{K}^{n\times r }_{l}$
		\State $\widehat{\mathcal{ A}}=\texttt{fft}(\mathcal{ A},[~],3)$ and $\widehat{\mathcal{B}}=\texttt{fft}(\mathcal{ B},[~],3)$
		\For{$k=1,\cdots,l$}
		\State $\widehat{\mathcal{ C}}_{(k)}=\widehat{\mathcal{ A}}_{(k)}\widehat{\mathcal{ B}}_{(k)}$
		\EndFor
		\State $\mathcal{ C}=\texttt{ifft}(\widehat{\mathcal{ C}},[~],3)$. 
  \State  \textbf{Output:} $\mathcal{C}$
	\end{algorithmic}
\end{algorithm}
Moreover, due to the special structure of the DFT, the $l$ matrix-matrix multiplications in Algorithm \ref{t-product} can be reduced to $\lceil\frac{l+1}{2}\rceil$, where $\lceil n \rceil$ means the nearest integer number larger than or equal to $n$. That is, we can replace the ``for" loop in Algorithm \ref{t-product} with the following computations \cite{lu2019tensor}:
 \begin{equation*}
 	\left\{
 	\begin{array}{lcl}
 		\widehat{\mathcal{ C}}_{(k)}=\widehat{\mathcal{ A}}_{(k)}\widehat{\mathcal{ B}}_{(k)},&&\text{for}~k=1,\cdots,\lceil\frac{l+1}{2}\rceil,\\
 		\widehat{\mathcal{C}}_{(k)}=\text{conj}(\widehat{\mathcal{C}}_{(l-k+2)}), & &\text{for}~ k=\lceil\frac{l+1}{2}\rceil+1,\cdots,l.
 	\end{array} \right.
 \end{equation*}
We will use them in all our simulations.

Next, we review some definitions and properties related to t-product, which will be necessary later in this paper. For details, refer to \cite{kilmer2011factorization, jin2017generalized, qi2021t, zheng2021t, kilmer2013third}.
\begin{definition}[identity tubal matrix \cite{kilmer2011factorization}]\rm
	The identity tubal matrix $\mathcal{I}_{n}\in \mathbb{K}^{n\times n }_{l}$ is the tubal matrix whose first frontal slice is the $n\times n$ identity matrix, and whose other frontal slices are all zeros. 
\end{definition}

\begin{definition}[inverse \cite{kilmer2011factorization}]\rm
	For a tubal matrix $\mathcal{A}\in \mathbb{K}^{n\times n}_{l}$, if there exists $\mathcal{B} \in \mathbb{K}^{n\times n}_{l}$ such that
	$$\mathcal{A}*\mathcal{B}=\mathcal{I}_{n}\quad \textrm{and}\quad \mathcal{B}*\mathcal{A}=\mathcal{I}_{n},$$
	then $\mathcal{A}$ is said to be invertible, and $\mathcal{B}$ is the inverse of $\mathcal{A}$, which is denoted by $\mathcal{A}^{-1}$.
\end{definition}

\begin{definition}[Moore-Penrose inverse \cite{jin2017generalized}]\rm
	For a tubal matrix $\mathcal{A}\in \mathbb{K}^{m\times n}_{l}$, if there exists $\mathcal{B} \in \mathbb{K}^{n\times m}_{l}$ such that
	$$\mathcal{A}*\mathcal{B}*\mathcal{A}=\mathcal{A},\quad\mathcal{B}*\mathcal{A}*\mathcal{B}=\mathcal{B},\quad (\mathcal{A}*\mathcal{B})^{T}=\mathcal{A}*\mathcal{B},\quad (\mathcal{B}*\mathcal{A})^{T}=\mathcal{B}*\mathcal{A},$$
	then $\mathcal{B}$ is called the Moore-Penrose inverse of $\mathcal{A}$ and is denoted by $\mathcal{A}^{\dag}$.
\end{definition}

\begin{lemma}[\cite{jin2017generalized}]
	The Moore-Penrose inverse of any tubal matrix $\mathcal{A}\in \mathbb{K}^{m\times n}_{l}$ exists and is unique, and if $\mathcal{A}$ is invertible, then $\mathcal{A}^{\dag}=\mathcal{A}^{-1}$.
\end{lemma}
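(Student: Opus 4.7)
The plan is to reduce both claims to the corresponding well-known facts about the ordinary Moore-Penrose inverse of matrices by exploiting the block-circulant representation. The key technical observation I would use is that $\mathrm{bcirc}$ is a ring homomorphism: $\mathrm{bcirc}(\mathcal{A}\ast\mathcal{B})=\mathrm{bcirc}(\mathcal{A})\mathrm{bcirc}(\mathcal{B})$ and $\mathrm{bcirc}(\mathcal{A}^{T})=\mathrm{bcirc}(\mathcal{A})^{T}$, so the four Moore-Penrose-type relations in the definition are equivalent, under $\mathrm{bcirc}$, to the four standard Penrose equations for the $ml\times nl$ matrix $\mathrm{bcirc}(\mathcal{A})$.

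For \emph{existence}, I would first take $M:=\mathrm{bcirc}(\mathcal{A})^{\dagger}$, the usual matrix Moore-Penrose inverse, and then argue that $M$ is itself block circulant. The cleanest way is to pass to the Fourier domain: the block DFT simultaneously block-diagonalizes every block-circulant matrix into the frontal slices $\widehat{\mathcal{A}}_{(k)}$; applying the scalar Moore-Penrose inverse slice-by-slice gives a block-diagonal matrix whose inverse DFT is again block circulant, and hence equals $\mathrm{bcirc}(\mathcal{B})$ for some $\mathcal{B}\in\mathbb{K}_{l}^{n\times m}$. Translating the four Penrose relations back through $\mathrm{bcirc}$ shows that this $\mathcal{B}$ satisfies the defining identities of $\mathcal{A}^{\dagger}$.

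For \emph{uniqueness}, suppose $\mathcal{B}_{1}$ and $\mathcal{B}_{2}$ both satisfy the four identities. Applying $\mathrm{bcirc}$ gives two matrices satisfying the four Penrose equations for $\mathrm{bcirc}(\mathcal{A})$, which must therefore coincide by the standard uniqueness of the matrix Moore-Penrose inverse; injectivity of $\mathrm{bcirc}$ (recovered via $\mathrm{fold}$) then forces $\mathcal{B}_{1}=\mathcal{B}_{2}$. For the last claim, if $\mathcal{A}$ is invertible with inverse $\mathcal{A}^{-1}$, one checks directly that $\mathcal{B}=\mathcal{A}^{-1}$ fulfills all four Moore-Penrose identities (the first two by definition of inverse, the last two because the relevant products equal $\mathcal{I}_{n}$, which is symmetric in the sense $\mathcal{I}_{n}^{T}=\mathcal{I}_{n}$); uniqueness then yields $\mathcal{A}^{\dagger}=\mathcal{A}^{-1}$.

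The only genuinely non-routine point is the closure property ``the Moore-Penrose inverse of a block circulant matrix is block circulant.'' I would handle this through the Fourier-domain block-diagonalization above rather than by a direct combinatorial argument on circulant blocks, since that gives the result in one line and simultaneously produces an explicit formula for $\mathcal{A}^{\dagger}$ via the slice-wise pseudoinverses $\widehat{\mathcal{A}}_{(k)}^{\dagger}$, which is useful later in the paper.
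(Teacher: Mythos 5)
Your argument is correct. Note that the paper does not prove this lemma at all --- it is quoted directly from the cited reference --- so there is no in-paper proof to compare against; your bcirc/Fourier-domain reduction is the standard route and is essentially certainly what the cited source does. The only detail worth making explicit in your existence step is why the tubal matrix recovered from the slice-wise pseudoinverses is \emph{real}: the DFT slices of a real tensor satisfy $\widehat{\mathcal{A}}_{(l-k+2)}=\mathrm{conj}(\widehat{\mathcal{A}}_{(k)})$, and since $(\mathrm{conj}(M))^{\dagger}=\mathrm{conj}(M^{\dagger})$ this conjugate symmetry is preserved under slice-wise pseudoinversion, so the inverse DFT lands back in $\mathbb{K}_{l}^{n\times m}$ rather than merely in a complex block-circulant algebra.
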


\begin{definition}[transpose \cite{kilmer2011factorization}]\rm
	For a tubal matrix $\mathcal{A}\in \mathbb{K}^{m\times n}_{l}$, the transpose $\mathcal{A}^{T}$ is defined by transposing each of the frontal slices of $\mathcal{A}$ and then reversing the order of transposed frontal slices $2$ through $l$, that is
	\begin{equation*}
		\left\{
		\begin{array}{lcl}
			(\mathcal{ A}^{T})_{(1)}=\mathcal{ A}_{(1)}^{T}, && \\
			(\mathcal{ A}^{T})_{(k)}=\mathcal{ A}_{(l-k+2)}^{T}, & &\text{for}~ k=2,\cdots,l.
		\end{array} \right.
	\end{equation*}
\end{definition}
In addition, we define the slice transpose $\mathcal{A}^{ST}$ by transposing each of the frontal slices of $\mathcal{A}$, that is
 \begin{equation*}
 		(\mathcal{ A}^{ST})_{(k)}=\mathcal{ A}_{(k)}^{T}, \quad\text{for}~ k=1,\cdots,l,
 \end{equation*}
 and the reverse $\mathcal{A}^{R}$ by reversing the order of $\mathcal{A}$'s frontal slices $2$ through $l$, that is
  \begin{equation*}
  	\left\{
  	\begin{array}{lcl}
  		(\mathcal{ A}^{R})_{(1)}=\mathcal{ A}_{(1)}, && \\
  		(\mathcal{ A}^{R})_{(k)}=\mathcal{ A}_{(l-k+2)}, & &\text{for}~ k=2,\cdots,l.
  	\end{array} \right.
  \end{equation*}
\begin{lemma}
		Let $\mathcal{ A}$ and $\mathcal{ B}$ be tubal matrices of any multiplicable dimension. Then
	\begin{enumerate}
		\item $(\mathcal{ A}^{ST})^{ST}=\mathcal{ A}$, $(\mathcal{ A}*\mathcal{ B})^{ST}=\mathcal{ B}^{ST}*\mathcal{ A}^{ST}$, $(\mathcal{ A}^{-1})^{ST}=(\mathcal{ A}^{ST})^{-1}$, $(\mathcal{ A}^{\dag})^{ST}=(\mathcal{ A}^{ST})^{\dag}$; 
		\item $(\mathcal{ A}^{R})^{R}=\mathcal{ A}$, $(\mathcal{ A}*\mathcal{ B})^{R}=\mathcal{ B}^{R}*\mathcal{ A}^{R}$, $(\mathcal{ A}^{-1})^{R}=(\mathcal{ A}^{R})^{-1}$, $(\mathcal{ A}^{\dag})^{R}=(\mathcal{ A}^{R})^{\dag}$;
		\item $\mathcal{A}^{T}=(\mathcal{A}^{ST})^{R}=(\mathcal{A}^{R})^{ST}$;
		\item $\widehat{\mathcal{ A}^{T}}_{(k)}=\widehat{\mathcal{ A}}_{(k)}^{H}$, $\widehat{\mathcal{ A}^{ST}}_{(k)}=\widehat{\mathcal{ A}}_{(k)}^{T}$ and $\widehat{\mathcal{ A}^{R}}_{(k)}=\text{\rm conj}(\widehat{\mathcal{ A}}_{(k)})$ for $k=1,\cdots,l$,  where $\widehat{\mathcal{ A}^{T}}=\texttt{\rm fft}(\mathcal{ A}^{T},[~],3)$, $\widehat{\mathcal{ A}^{ST}}=\texttt{\rm fft}(\mathcal{ A}^{ST},[~],3)$, $\widehat{\mathcal{ A}^{R}}=\texttt{\rm fft}(\mathcal{ A}^{R},[~],3)$ and $\widehat{\mathcal{ A}}_{(k)}^{H}$, $\widehat{\mathcal{ A}}_{(k)}^{T}$, and $\text{\rm conj}(\widehat{\mathcal{ A}}_{(k)})$ represent the conjugate transpose, transpose, and conjugate of $\widehat{\mathcal{ A}}_{(k)}$, respectively.
	\end{enumerate}
\end{lemma}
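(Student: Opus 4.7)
The plan is to establish part 4 first, since the Fourier-domain identities will serve as the main technical tool for all the other parts. To prove $\widehat{\mathcal{A}^{ST}}_{(k)}=\widehat{\mathcal{A}}_{(k)}^{T}$, I would apply the DFT along the third mode slice-by-slice: $\widehat{\mathcal{A}^{ST}}_{(k)}=\sum_{j=1}^{l}\mathcal{A}_{(j)}^{T}\,\omega_{l}^{(j-1)(k-1)}$ with $\omega_{l}=e^{-2\pi i/l}$, and this equals $\widehat{\mathcal{A}}_{(k)}^{T}$ by linearity of the DFT. For $\widehat{\mathcal{A}^{R}}_{(k)}=\text{conj}(\widehat{\mathcal{A}}_{(k)})$, I would substitute $m=l-j+2$ in the defining sum, use $\omega_{l}^{l}=1$ to turn $\omega_{l}^{(l-m+1)(k-1)}$ into $\omega_{l}^{-(m-1)(k-1)}$, and then observe that the entries $\mathcal{A}_{(m)}$ are real so the resulting sum is the entrywise complex conjugate of $\widehat{\mathcal{A}}_{(k)}$. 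The transpose identity $\widehat{\mathcal{A}^{T}}_{(k)}=\widehat{\mathcal{A}}_{(k)}^{H}$ follows by combining these two after part 3 is in hand.

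For part 3, I would verify $\mathcal{A}^{T}=(\mathcal{A}^{ST})^{R}$ slice-by-slice directly from the definitions: both sides equal $\mathcal{A}_{(1)}^{T}$ on the first frontal slice and $\mathcal{A}_{(l-k+2)}^{T}$ on slice $k\geq 2$. The identity $\mathcal{A}^{T}=(\mathcal{A}^{R})^{ST}$ is checked analogously (and is essentially commutativity of transposing each slice with reordering the stack).

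Parts 1 and 2 then follow by a standard Fourier-domain argument combined with injectivity of the DFT. For instance, to prove $(\mathcal{A}*\mathcal{B})^{ST}=\mathcal{B}^{ST}*\mathcal{A}^{ST}$, I would compute $\widehat{(\mathcal{A}*\mathcal{B})^{ST}}_{(k)}=(\widehat{\mathcal{A}}_{(k)}\widehat{\mathcal{B}}_{(k)})^{T}=\widehat{\mathcal{B}^{ST}}_{(k)}\widehat{\mathcal{A}^{ST}}_{(k)}$ using part 4 together with Algorithm \ref{t-product}. The identities for $\mathcal{A}^{-1}$ and $\mathcal{A}^{\dag}$ follow by applying ST (resp.\ R) to the defining equations $\mathcal{A}*\mathcal{A}^{-1}=\mathcal{A}^{-1}*\mathcal{A}=\mathcal{I}_{n}$ and to the four Moore-Penrose conditions, using $\mathcal{I}_{n}^{ST}=\mathcal{I}_{n}^{R}=\mathcal{I}_{n}$ (since only the first frontal slice of $\mathcal{I}_n$ is nonzero and it equals the ordinary identity matrix), together with uniqueness of the respective inverses. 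Part 2 is entirely analogous with $\text{conj}(\cdot)$ in place of transpose. The main subtlety I expect is in the Moore-Penrose cases, where one must verify that applying ST or R preserves all four Penrose conditions simultaneously; the Fourier-domain viewpoint handles this cleanly, because part 4 diagonalizes the t-product and reduces the four tubal Penrose conditions to four ordinary matrix Penrose conditions on each frequency block $\widehat{\mathcal{A}}_{(k)}$.
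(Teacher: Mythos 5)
Your proposal is correct. The paper itself omits the proof of this lemma (it says only that the argument is ``straightforward, but tedious''), so there is no argument of record to compare against; your Fourier-domain route --- establishing part 4 by direct DFT computation with the index substitution $m=l-j+2$ and the fact that the frontal slices are real, part 3 slice-by-slice from the definitions, and then parts 1 and 2 by passing to the frequency blocks and invoking injectivity of the DFT together with uniqueness of the inverse and of the Moore--Penrose inverse --- is a complete and clean way to supply the missing proof. The one presentational wrinkle is ordering: the identity $\widehat{\mathcal{A}^{T}}_{(k)}=\widehat{\mathcal{A}}_{(k)}^{H}$ in part 4 leans on part 3, which is itself independent of part 4, so the parts should be proved in the order 3, 4, 1, 2 (or the $T$-identity deferred); this is cosmetic, not a gap. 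Your observation that part 4 reduces the four tubal Penrose conditions to ordinary matrix Penrose conditions on each block $\widehat{\mathcal{A}}_{(k)}$ (using $(A^{T})^{\dag}=(A^{\dag})^{T}$ for complex matrices) is exactly the right way to handle the $\mathcal{A}^{\dag}$ cases; an equivalent time-domain alternative would be to carry out the same bookkeeping with $\text{bcirc}(\cdot)$ throughout.
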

\begin{proof}
    The proof is straightforward, but tedious, so we omit it here.
\end{proof}
\begin{definition}[T-symmetric \cite{kilmer2011factorization}]\rm
	For a tubal matrix $\mathcal{A}\in \mathbb{K}^{n\times n}_{l}$, it is T-symmetric if $\mathcal{A}=\mathcal{A}^{T}$. 
\end{definition}

It is easy to see that if  $\mathcal{ A}$ is T-symmetric, then $\mathcal{A}^{ST}=\mathcal{A}^{R}$.

\begin{definition} [orthogonal tubal matrix \cite{kilmer2011factorization}]\rm
	For a tubal matrix $\mathcal{A}\in \mathbb{K}^{n\times n}_{l}$, it is orthogonal if $\mathcal{A}^{T}*\mathcal{A}=\mathcal{A}*\mathcal{A}^{T}=\mathcal{I}$.
\end{definition}

\begin{definition}[\cite{kilmer2011factorization}]\rm
	For a tubal matrix
	$\mathcal{A} \in \mathbb{K}^{m\times n}_{l}$,  define
 \begin{align*}
     \mathbf{Range}(\mathcal{A})=\left\{\overrightarrow{\mathcal{Y}}\in\mathbb{K}^{m}_{l}\mid\overrightarrow{\mathcal{Y}}=\mathcal{A}*\overrightarrow{\mathcal{X}},~\text{for}~\text{any}~\overrightarrow{\mathcal{X}}\in \mathbb{K}^{n}_{l}\right\}.
 \end{align*}
\end{definition}

\begin{definition}[\cite{kilmer2013third}]\rm
	For  a tubal matrix
	$\mathcal{P}\in \mathbb{K}^{n\times n}_{l}$, it is a projector if $\mathcal{P}^{2}=\mathcal{P}*\mathcal{P}=\mathcal{P}$, and is orthogonal projector if $\mathcal{P}^{T}=\mathcal{P}$ also holds. 
\end{definition}

Note that $\mathcal{A}*(\mathcal{A}^{T}*\mathcal{A})^{\dag}*\mathcal{A}^{T}$ is an orthogonal projector onto $\mathbf{Range}(\mathcal{A})$.
\begin{lemma} \label{sub1lem4}
Let $\mathcal{ A}\in \mathbb{K}^{m\times n}_{l}$ be any tubal matrix. Then
$$\|\mathcal{A}-\mathcal{P}_{1}*\mathcal{A}*\mathcal{P}_{2}\|_{F}^{2}=\|\mathcal{A}\|_{F}^{2}-\|\mathcal{P}_{1}*\mathcal{A}*\mathcal{P}_{2}\|_{F}^{2},$$ where $\mathcal{P}_{1}$ and $\mathcal{P}_{2}$ are orthogonal projectors.
\end{lemma}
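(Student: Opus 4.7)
The plan is to pass to the Fourier domain and reduce the identity to a slicewise Pythagorean theorem for complex matrices. The key ingredients I will use are Parseval's identity for the mode-$3$ DFT, namely $\|\mathcal{X}\|_F^{2}=\tfrac{1}{l}\sum_{k=1}^{l}\|\widehat{\mathcal{X}}_{(k)}\|_F^{2}$, together with the slicewise form of the t-product recorded in Algorithm~\ref{t-product}, which yields $\widehat{\mathcal{P}_1*\mathcal{A}*\mathcal{P}_2}_{(k)}=\widehat{\mathcal{P}_1}_{(k)}\widehat{\mathcal{A}}_{(k)}\widehat{\mathcal{P}_2}_{(k)}$ for every $k\in[l]$. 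Combining the two, the tensor identity to be proved reduces to showing that the complex matrix identity
\begin{equation*}
\bigl\|\widehat{\mathcal{A}}_{(k)}-\widehat{\mathcal{P}_1}_{(k)}\widehat{\mathcal{A}}_{(k)}\widehat{\mathcal{P}_2}_{(k)}\bigr\|_{F}^{2}
=\bigl\|\widehat{\mathcal{A}}_{(k)}\bigr\|_{F}^{2}-\bigl\|\widehat{\mathcal{P}_1}_{(k)}\widehat{\mathcal{A}}_{(k)}\widehat{\mathcal{P}_2}_{(k)}\bigr\|_{F}^{2}
\end{equation*}
holds for every $k\in[l]$, after which summing over $k$ and dividing by $l$ delivers the tensor statement.

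Next, I would verify that each Fourier slice $\widehat{\mathcal{P}_i}_{(k)}$ is a Hermitian idempotent complex matrix. Idempotency $\widehat{\mathcal{P}_i}_{(k)}^{2}=\widehat{\mathcal{P}_i}_{(k)}$ is immediate from $\mathcal{P}_i*\mathcal{P}_i=\mathcal{P}_i$ by the slicewise product rule, while Hermitian-ness $\widehat{\mathcal{P}_i}_{(k)}^{H}=\widehat{\mathcal{P}_i}_{(k)}$ follows from the T-symmetry $\mathcal{P}_i^{T}=\mathcal{P}_i$ combined with item~4 of the preceding lemma, which identifies $\widehat{\mathcal{P}_i^{T}}_{(k)}$ with $\widehat{\mathcal{P}_i}_{(k)}^{H}$. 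Thus the Fourier slices of $\mathcal{P}_1$ and $\mathcal{P}_2$ are honest orthogonal projectors in the ordinary complex-matrix sense.

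Third, I would invoke the classical Pythagorean identity for Hermitian projectors: for any complex matrix $M$ and Hermitian projectors $P_1,P_2$, expanding $\|M-P_1 M P_2\|_{F}^{2}$ in terms of the trace inner product and using cyclicity together with $P_i^{2}=P_i=P_i^{H}$ shows that the cross term $2\,\mathrm{Re}\,\mathrm{tr}(M^{H}P_1 M P_2)$ collapses to $2\|P_1 M P_2\|_{F}^{2}$, yielding the desired matrix identity. Setting $M=\widehat{\mathcal{A}}_{(k)}$ and $P_i=\widehat{\mathcal{P}_i}_{(k)}$ closes the slicewise step, and the Parseval summation then completes the argument.

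The main technical point is the Hermitian identification in the second step: one must carefully translate the two tensorial hypotheses on $\mathcal{P}_i$ into ordinary complex Hermitian-and-idempotent properties of their Fourier slices via item~4 of the preceding lemma. Everything else is routine. A purely tensor-level alternative would expand $\|\mathcal{A}-\mathcal{P}_1*\mathcal{A}*\mathcal{P}_2\|_{F}^{2}$ directly and reduce the claim to the inner-product identity $\langle\mathcal{A},\mathcal{P}_1*\mathcal{A}*\mathcal{P}_2\rangle=\|\mathcal{P}_1*\mathcal{A}*\mathcal{P}_2\|_{F}^{2}$, but establishing the requisite adjointness $\langle\mathcal{X},\mathcal{B}*\mathcal{Y}\rangle=\langle\mathcal{B}^{T}*\mathcal{X},\mathcal{Y}\rangle$ for the t-product essentially retraces the Fourier argument, so the route above seems the cleanest.
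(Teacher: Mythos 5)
Your proof is correct and is essentially the paper's argument: both reduce the claim to the ordinary matrix Pythagorean identity for orthogonal projectors via a norm-preserving linear representation of the t-product. The only difference is that the paper works directly with $\text{bcirc}(\cdot)$ (using Proposition~\ref{sec2-pro1} to see that $\text{bcirc}(\mathcal{P}_i)$ is an orthogonal projector and that $\|\mathcal{X}\|_F^2=\tfrac{1}{l}\|\text{bcirc}(\mathcal{X})\|_F^2$), whereas you pass one step further to the Fourier slices, which block-diagonalize $\text{bcirc}$ and turn the same computation into a slicewise identity for complex Hermitian idempotents; the two routes are equivalent.
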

\begin{proof}
Since  $\mathcal{P}_{1}$ and $\mathcal{P}_{2}$ are orthogonal projectors,  $\text{bcirc}(\mathcal{P}_{1})$ and $\text{bcirc}(\mathcal{P}_{2})$ are also orthogonal projectors, which will be given in Proposition \ref{sec2-pro1}. Thus
\begin{align*}
\|\mathcal{A}-\mathcal{P}_{1}*\mathcal{A}*\mathcal{P}_{2}\|_{F}^{2}&=\frac{1}{l}\|\text{bcirc}(\mathcal{A}-\mathcal{P}_{1}*\mathcal{A}*\mathcal{P}_{2})\|_{F}^{2}\\
&=\frac{1}{l}\|\text{bcirc}(\mathcal{A})-\text{bcirc}(\mathcal{P}_{1})\text{bcirc}(\mathcal{A})\text{bcirc}(\mathcal{P}_{2})\|_{F}^{2}\\
&=\frac{1}{l}\|\text{bcirc}(\mathcal{A})\|_{F}^{2}-\frac{1}{l}\|\text{bcirc}(\mathcal{P}_{1})\text{bcirc}(\mathcal{A})\text{bcirc}(\mathcal{P}_{2})\|_{F}^{2}\\
&=\frac{1}{l}\|\text{bcirc}(\mathcal{A})\|_{F}^{2}-\frac{1}{l}\|\text{bcirc}(\mathcal{P}_{1}*\mathcal{A}*\mathcal{P}_{2})\|_{F}^{2}\\
&=\|\mathcal{A}\|_{F}^{2}-\|\mathcal{P}_{1}*\mathcal{A}*\mathcal{P}_{2}\|_{F}^{2}.
\end{align*}
\end{proof}

\begin{definition}[T-symmetric T-positive (semi)definite \cite{zheng2021t}]\rm
	For  a tubal matrix $\mathcal{A}\in \mathbb{K}^{n\times n }_{l}$, % be a tubal matrix. We say $\mathcal{X}$
	it is T-symmetric T-positive (semi)definite if $\mathcal{A}$ is  T-symmetric and $\langle\overrightarrow{\mathcal{X}},\mathcal{A}*\overrightarrow{\mathcal{X}}\rangle>(\geq) 0$ holds for any nonzero $\overrightarrow{\mathcal{X}}\in \mathbb{K}^{n}_{l}$ (for any $\overrightarrow{\mathcal{X}}\in \mathbb{K}^{n}_{l}$).
\end{definition}

\begin{proposition}[\cite{qi2021t, zheng2021t}]\label{sec2-pro1}
	For a tubal matrix
	$\mathcal{A}\in \mathbb{K}^{n\times n}_{l}$, it is T-symmetric if and only if $\text{bcirc}(\mathcal{A})$ is symmetric, % A tubal matrix $\mathcal{X}\in \mathbb{K}^{n\times n }_{l}$ 
	is invertible if and only if $\text{bcirc}(\mathcal{A})$ is invertible, %. Furthermore, $\mathcal{X}\in \mathbb{K}^{n\times n}_{l}$ 
	is orthogonal if and only if $\text{bcirc}(\mathcal{A})$ is orthogonal, and is T-symmetric T-positive (semi)definite if and only if $\text{bcirc}(\mathcal{A})$ is symmetric positive (semi)definite %, and is T-symmetric T-positive (semi)definite 
	if and only if $\widehat{\mathcal{A}}_{(k)}$ for $k=1,\cdots,l$ are all Hermitian positive (semi)definite.
\end{proposition}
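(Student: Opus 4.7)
The plan is to handle the four equivalences in sequence, with the two unifying tools being: (i) the ring-homomorphism identities $\text{bcirc}(\mathcal{A}\ast\mathcal{B})=\text{bcirc}(\mathcal{A})\,\text{bcirc}(\mathcal{B})$, $\text{bcirc}(\mathcal{I}_n)=I_{nl}$, and $\text{bcirc}(\mathcal{A}^{T})=(\text{bcirc}(\mathcal{A}))^{T}$; and (ii) the DFT block-diagonalization
\[
(F_l\otimes I_n)\,\text{bcirc}(\mathcal{A})\,(F_l^{\ast}\otimes I_n)=\text{blkdiag}\bigl(\widehat{\mathcal{A}}_{(1)},\ldots,\widehat{\mathcal{A}}_{(l)}\bigr),
\]
where $F_l$ is the normalized DFT matrix. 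Identity (i) is essentially bookkeeping from the definitions of $*$, transpose and bcirc, while (ii) is the standard diagonalization of block-circulant matrices by the Fourier transform.

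Using (i), the T-symmetry equivalence $\mathcal{A}=\mathcal{A}^{T}\iff\text{bcirc}(\mathcal{A})=(\text{bcirc}(\mathcal{A}))^{T}$ is immediate. For invertibility, the forward direction follows from $\text{bcirc}(\mathcal{A})\,\text{bcirc}(\mathcal{A}^{-1})=\text{bcirc}(\mathcal{I}_n)=I_{nl}$; for the converse, I would use (ii) to conclude that $\text{bcirc}(\mathcal{A})$ is invertible iff every $\widehat{\mathcal{A}}_{(k)}$ is, and then construct the candidate inverse $\mathcal{B}$ by setting $\widehat{\mathcal{B}}_{(k)}:=\widehat{\mathcal{A}}_{(k)}^{-1}$ and applying $\texttt{ifft}$ along the third mode, verifying $\mathcal{A}\ast\mathcal{B}=\mathcal{B}\ast\mathcal{A}=\mathcal{I}_n$ via Algorithm \ref{t-product}. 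The orthogonality equivalence then drops out by combining the previous two: $\mathcal{A}^{T}\ast\mathcal{A}=\mathcal{A}\ast\mathcal{A}^{T}=\mathcal{I}_n$ translates directly to $(\text{bcirc}(\mathcal{A}))^{T}\text{bcirc}(\mathcal{A})=\text{bcirc}(\mathcal{A})(\text{bcirc}(\mathcal{A}))^{T}=I_{nl}$.

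For the T-symmetric T-positive (semi)definite chain, the crucial identity is
\[
\mathbf{v}^{T}\,\text{bcirc}(\mathcal{A})\,\mathbf{v}=\langle\overrightarrow{\mathcal{X}},\mathcal{A}\ast\overrightarrow{\mathcal{X}}\rangle\quad\text{for }\mathbf{v}=\text{unfold}(\overrightarrow{\mathcal{X}}),
\]
which follows from $\text{unfold}(\mathcal{A}\ast\overrightarrow{\mathcal{X}})=\text{bcirc}(\mathcal{A})\,\text{unfold}(\overrightarrow{\mathcal{X}})$ together with the fact that unfold is an isometric bijection between $\mathbb{K}_l^{n}$ and $\mathbb{R}^{nl}$. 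Since every $\mathbf{v}\in\mathbb{R}^{nl}$ arises this way, the two quadratic-form conditions are equivalent, and together with the T-symmetry part this yields the first equivalence. For the second equivalence, I would invoke (ii): unitary similarity preserves the Hermitian property and the spectrum, so $\text{bcirc}(\mathcal{A})$ is symmetric positive (semi)definite iff the block-diagonal matrix on the right is Hermitian positive (semi)definite, which holds iff every diagonal block $\widehat{\mathcal{A}}_{(k)}$ is Hermitian positive (semi)definite.

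I expect the only real obstacle to be the verification of the transpose identity $\text{bcirc}(\mathcal{A}^{T})=(\text{bcirc}(\mathcal{A}))^{T}$, since it requires matching the ``reverse-and-transpose slices'' definition of $\mathcal{A}^{T}$ against the block-circulant index pattern; this is the step where the indexing $k\mapsto l-k+2$ in the transpose definition must be paired correctly with the circulant shift. Once this identity is in hand, all four statements follow from the same two tools applied mechanically.
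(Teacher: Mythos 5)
Your argument is correct. The paper does not actually prove Proposition~\ref{sec2-pro1} — it imports it from \cite{qi2021t, zheng2021t} — but your two tools (the ring-homomorphism identities $\text{bcirc}(\mathcal{A}*\mathcal{B})=\text{bcirc}(\mathcal{A})\text{bcirc}(\mathcal{B})$, $\text{bcirc}(\mathcal{A}^{T})=\text{bcirc}(\mathcal{A})^{T}$ together with the DFT block-diagonalization of block-circulant matrices) are exactly the standard route and are consistent with how the paper itself manipulates $\text{bcirc}(\cdot)$ elsewhere, e.g.\ in Lemma~\ref{sub1lem4} and in the eigenvalue computations $\lambda_{\min}(\mathbb{E}[\text{bcirc}(\cdot)])=\min_{k}\lambda_{\min}(\mathbb{E}[\widehat{\cdot}_{(k)}])$ in the appendix. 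The one point worth making explicit in your invertibility step is that the candidate inverse $\mathcal{B}$ defined by $\widehat{\mathcal{B}}_{(k)}=\widehat{\mathcal{A}}_{(k)}^{-1}$ is a \emph{real} tubal matrix; this follows because the conjugate symmetry $\widehat{\mathcal{A}}_{(l-k+2)}=\text{conj}(\widehat{\mathcal{A}}_{(k)})$ of the DFT of a real tensor (used by the paper to halve the work in Algorithm~\ref{t-product}) is inherited by the blockwise inverses, so the \texttt{ifft} returns a real tensor.
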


\begin{definition}[\cite{tang2022sketch}]\rm
For a T-symmetric T-positive (semi)definite tubal matrix $\mathcal{A}\in \mathbb{K}^{n\times n }_{l}$, its square root is defined as $\mathcal{A}^{\frac{1}{2}}=\text{bcirc}^{-1}(\text{bcirc}(\mathcal{A})^{\frac{1}{2}})$, where $\text{bcirc}^{-1}(\cdot)$ denotes the inverse operation of $\text{bcirc}(\cdot)$. Moreover, $\mathcal{A}=\mathcal{A}^{\frac{1}{2}}*\mathcal{A}^{\frac{1}{2}}$ and $\text{bcirc}(\mathcal{A}^{\frac{1}{2}})=\text{bcirc}(\mathcal{A})^{\frac{1}{2}}$.
\end{definition}

Furthermore, we can prove the following results.
\begin{lemma}
	Let $\mathcal{ A}$ be  any T-symmetric T-positive (semi)definite tubal matrix. Then
	\begin{enumerate}
		\item $\mathcal{ A}^{ST}$ is also a T-symmetric T-positive (semi)deﬁnite tubal matrix and $(\mathcal{A}^{ST})^{\frac{1}{2}}=(\mathcal{A}^{\frac{1}{2}})^{ST}$;
		\item $\mathcal{ A}^{R}$ is also a T-symmetric T-positive (semi)deﬁnite tubal matrix and $(\mathcal{ A}^{R})^{\frac{1}{2}}=(\mathcal{ A}^{\frac{1}{2}})^{R}$.
	\end{enumerate}
\end{lemma}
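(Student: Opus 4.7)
The plan is to reduce everything to the Fourier domain using Proposition \ref{sec2-pro1} and the identities $\widehat{\mathcal{A}^{ST}}_{(k)} = \widehat{\mathcal{A}}_{(k)}^{T}$ and $\widehat{\mathcal{A}^{R}}_{(k)} = \text{conj}(\widehat{\mathcal{A}}_{(k)})$ from the lemma just above. Because $\mathcal{A}$ is T-symmetric T-positive (semi)definite, each $\widehat{\mathcal{A}}_{(k)}$ is Hermitian positive (semi)definite. First I would observe that for a Hermitian PSD matrix $H$ one has $H^{T} = \overline{H^{H}} = \overline{H}$, so both $H^{T}$ and $\text{conj}(H)$ are Hermitian and, having the same (real) eigenvalues as $H$, are again PSD. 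Applied slice-by-slice, this shows $\widehat{\mathcal{A}^{ST}}_{(k)}$ and $\widehat{\mathcal{A}^{R}}_{(k)}$ are Hermitian PSD for every $k$, and Proposition \ref{sec2-pro1} then gives that $\mathcal{A}^{ST}$ and $\mathcal{A}^{R}$ are T-symmetric T-positive (semi)definite.

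For the square-root identities I would use the uniqueness of the Hermitian PSD square root slicewise. The definition $\mathcal{A}^{\frac{1}{2}}=\text{bcirc}^{-1}(\text{bcirc}(\mathcal{A})^{\frac{1}{2}})$, combined with the fact that $\text{bcirc}$ is block-diagonalized by the DFT, yields $\widehat{\mathcal{A}^{\frac{1}{2}}}_{(k)} = (\widehat{\mathcal{A}}_{(k)})^{\frac{1}{2}}$, the unique Hermitian PSD square root. For part (1), this gives
\[
\widehat{(\mathcal{A}^{\frac{1}{2}})^{ST}}_{(k)} = \bigl((\widehat{\mathcal{A}}_{(k)})^{\frac{1}{2}}\bigr)^{T}, \qquad \widehat{(\mathcal{A}^{ST})^{\frac{1}{2}}}_{(k)} = \bigl((\widehat{\mathcal{A}}_{(k)})^{T}\bigr)^{\frac{1}{2}}.
\]
The matrix $((\widehat{\mathcal{A}}_{(k)})^{\frac{1}{2}})^{T}$ is Hermitian PSD by the observation above and, since $(BB)^{T} = B^{T}B^{T}$, squares to $(\widehat{\mathcal{A}}_{(k)})^{T}$; by uniqueness of the Hermitian PSD square root it equals $((\widehat{\mathcal{A}}_{(k)})^{T})^{\frac{1}{2}}$. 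Applying $\texttt{ifft}$ along the third mode then gives $(\mathcal{A}^{\frac{1}{2}})^{ST} = (\mathcal{A}^{ST})^{\frac{1}{2}}$, and part (2) follows by the same argument with transpose replaced by entrywise conjugation.

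A clean spatial-domain alternative is available: from the product rule $(\mathcal{X}*\mathcal{Y})^{ST}=\mathcal{Y}^{ST}*\mathcal{X}^{ST}$ (and its $R$ analogue) recorded earlier, one has $(\mathcal{A}^{\frac{1}{2}})^{ST}*(\mathcal{A}^{\frac{1}{2}})^{ST}=(\mathcal{A}^{\frac{1}{2}}*\mathcal{A}^{\frac{1}{2}})^{ST}=\mathcal{A}^{ST}$, and part (1) of the present lemma applied to $\mathcal{A}^{\frac{1}{2}}$ guarantees that $(\mathcal{A}^{\frac{1}{2}})^{ST}$ is itself T-symmetric T-positive (semi)definite; uniqueness of the tubal-matrix square root (which is inherited from the slicewise uniqueness) then forces $(\mathcal{A}^{\frac{1}{2}})^{ST} = (\mathcal{A}^{ST})^{\frac{1}{2}}$. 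I expect the only delicate point to be articulating this uniqueness and checking that $\mathcal{A}^{\frac{1}{2}}$ is itself T-symmetric T-positive (semi)definite; both drop out of Proposition \ref{sec2-pro1} together with the standard fact that the PSD square root of a symmetric PSD matrix is itself symmetric PSD, applied to $\text{bcirc}(\mathcal{A})$.
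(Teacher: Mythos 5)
The paper states this lemma without any proof (not even the ``straightforward but tedious'' disclaimer attached to the neighbouring lemmas), so there is no official argument to compare against; judged on its own, your proof is correct and complete. The Fourier-domain reduction is exactly the natural route: Proposition \ref{sec2-pro1} converts T-symmetric T-positive (semi)definiteness into slicewise Hermitian positive (semi)definiteness, the identities $\widehat{\mathcal{A}^{ST}}_{(k)}=\widehat{\mathcal{A}}_{(k)}^{T}$ and $\widehat{\mathcal{A}^{R}}_{(k)}=\text{conj}(\widehat{\mathcal{A}}_{(k)})$ handle the two operations, and your observation that $H^{T}=\overline{H}$ for Hermitian $H$ preserves Hermitian PSD-ness settles the first claim in each item. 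For the square-root identities, the key facts you invoke — that $\widehat{\mathcal{A}^{\frac{1}{2}}}_{(k)}=(\widehat{\mathcal{A}}_{(k)})^{\frac{1}{2}}$ (because $\text{bcirc}$ is block-diagonalized by the DFT and the paper records $\text{bcirc}(\mathcal{A}^{\frac{1}{2}})=\text{bcirc}(\mathcal{A})^{\frac{1}{2}}$), that $(BB)^{T}=B^{T}B^{T}$, and the uniqueness of the Hermitian PSD square root — are all correct and suffice, and the bijectivity of $\texttt{fft}$ along the third mode lets you transfer the slicewise equality back to the tubal matrices. Your spatial-domain alternative via $(\mathcal{A}^{\frac{1}{2}})^{ST}*(\mathcal{A}^{\frac{1}{2}})^{ST}=\mathcal{A}^{ST}$ plus uniqueness is equally valid; either version would serve as the proof the paper omitted.
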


In the following, we give some new definitions or results, which are essential for the subsequent proposed methods and their corresponding convergence analysis.

\begin{definition}\rm
	For a tubal matrix $\mathcal{A}\in\mathbb{K}_{l}^{m\times n}$, its t-vectorization is denoted by $\text{vec}_{t}(\mathcal{ A})$ and defined as
   $$\text{vec}_{t}(\mathcal{ A})\overset{\text{def}}{=}\begin{bmatrix}
		\mathcal{ A}_{(:,1,:)}  \\
		\mathcal{ A}_{(:,2,:)} \\
		\vdots   \\
		\mathcal{ A}_{(:,n,:)} \\
	\end{bmatrix}\in\mathbb{K}_{l}^{mn}.$$ 
\end{definition}

It is true that $\widehat{\text{vec}_{t}(\mathcal{ A})}_{(k)}=\text{vec}(\widehat{\mathcal{ A}}_{(k)})$ for $k=1,\cdots,l$, where $\widehat{\text{vec}_{t}(\mathcal{ A})}=\texttt{fft}(\text{vec}_{t}(\mathcal{ A}),[~],3)$ and $\text{vec}(\cdot)$ means the vectorization of a matrix.

\begin{definition}\rm
	For the tubal matrices $\mathcal{ A}\in\mathbb{K}_{l}^{m\times n}$ and $\mathcal{ B}\in\mathbb{K}_{l}^{r\times s}$, their t-Kronecker product is denoted by $\mathcal{A}\otimes_{t}\mathcal{B}$ and defined as
	$$\mathcal{A}\otimes_{t}\mathcal{B}\overset{\text{def}}{=}\begin{bmatrix}
		\mathcal{ A}_{(1,1,:)}*\mathcal{ B} & \mathcal{ A}_{(1,2,:)}*\mathcal{ B} &  \cdots & \mathcal{ A}_{(1,n,:)} *\mathcal{ B} \\
		\mathcal{ A}_{(2,1,:)}*\mathcal{ B}&  \mathcal{ A}_{(2,2,:)}*\mathcal{ B}&  \cdots  & \mathcal{ A}_{(2,n,:)} *\mathcal{ B} \\
		\vdots 	& \vdots 	&   \ddots &  	\vdots  \\
		\mathcal{ A}_{(m,1,:)}*\mathcal{ B} &\mathcal{ A}_{(m,2,:)}*\mathcal{ B} &  \cdots  &  \mathcal{ A}_{(m,n,:)} *\mathcal{ B}\\
	\end{bmatrix}\in\mathbb{K}_{l}^{mr\times ns}.$$
\end{definition}

Let $\widehat{\mathcal{ A}}=\texttt{fft}(\mathcal{ A},[~],3)$, $\widehat{\mathcal{ B}}=\texttt{fft}(\mathcal{ B},[~],3)$ and  $\widehat{\mathcal{ C}}=\texttt{fft}(\mathcal{A}\otimes_{t}\mathcal{B},[~],3)$. Then it is easy to check that $\widehat{\mathcal{ C}}_{(k)}=\widehat{\mathcal{ A}}_{(k)}\otimes\widehat{\mathcal{ B}}_{(k)}$ for $k=1,\cdots,l$, where $A\otimes B$ stands for the Kronecker product of the two matrices $A$ and $B$.
\begin{lemma}
	Let $\mathcal{ A}$, $\mathcal{ B}$, $\mathcal{ C}$ and $\mathcal{ D}$ be tubal matrices of any multiplicable dimension. The following results hold.
	\begin{enumerate}
		\item $\text{\rm vec}_{t}(\mathcal{ A}*\mathcal{ B}*\mathcal{ C})=(\mathcal{ C}^{ST}\otimes_{t}\mathcal{ A})*\text{\rm vec}_{t}(\mathcal{ B})$;
		\item $(\mathcal{ A}\otimes_{t}\mathcal{ B})^{T}=\mathcal{ A}^{T}\otimes_{t}\mathcal{ B}^{T}$,  $(\mathcal{ A}\otimes_{t}\mathcal{ B})^{ST}=\mathcal{ A}^{ST}\otimes_{t}\mathcal{ B}^{ST}$, $(\mathcal{ A}\otimes_{t}\mathcal{ B})^{R}=\mathcal{ A}^{R}\otimes_{t}\mathcal{ B}^{R}$;
		
		\item $(\mathcal{ A}*\mathcal{ B})\otimes_{t}(\mathcal{ C}*\mathcal{ D})=(\mathcal{ A}\otimes_{t}\mathcal{ C})*(\mathcal{ B}\otimes_{t}\mathcal{ D})$;
		
		\item $\|\mathcal{ A}\otimes_{t}\mathcal{ B}\|_{F}^{2}=\frac{1}{l}\sum_{k=1}^{l}\|\widehat{\mathcal{ A}}_{(k)}\|_{F}^{2}\|\widehat{\mathcal{ B}}_{(k)}\|_{F}^{2}$ ;
		
		\item $(\mathcal{ A}\otimes_{t}\mathcal{ B})^{\dag}=\mathcal{ A}^{\dag}\otimes_{t}\mathcal{ B}^{\dag}$;
		
		\item $\lambda_{\min}({\text{\rm bcirc}}(\mathcal{ A}\otimes_{t}\mathcal{ B}))\geq\lambda_{\min}({\text{\rm bcirc}}(\mathcal{ A}))\lambda_{\min}({\text{\rm bcirc}}(\mathcal{ B}))$;
		
		\item If $\mathcal{ A}$ and $\mathcal{ B}$ are invertible, then $\mathcal{ A}\otimes_{t}\mathcal{ B}$ is also invertible, and $\mathcal{ A}^{-1}\otimes_{t}\mathcal{ B}^{-1}$ is the inverse of $\mathcal{ A}\otimes_{t}\mathcal{ B}$, that is $(\mathcal{ A}\otimes_{t}\mathcal{ B})^{-1}=\mathcal{ A}^{-1}\otimes_{t}\mathcal{ B}^{-1}$;
		
		\item If $\mathcal{ A}$ and $\mathcal{ B}$ are T-symmetric T-positive (semi)deﬁnite, then $\mathcal{ A}\otimes_{t}\mathcal{ B}$ is also T-symmetric T-positive (semi)deﬁnite;
		
		\item If $\mathcal{ A}$ and $\mathcal{ B}$ are orthogonal projectors, then $\mathcal{ A}\otimes_{t}\mathcal{ B}$ is also an orthogonal projector.
		
	\end{enumerate}
\end{lemma}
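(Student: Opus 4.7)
The unifying observation is that the mode-three DFT block-diagonalises both the t-product and the t-Kronecker product: the text preceding the lemma already records $\widehat{\mathcal{A}\otimes_{t}\mathcal{B}}_{(k)} = \widehat{\mathcal{A}}_{(k)}\otimes\widehat{\mathcal{B}}_{(k)}$, while Algorithm~\ref{t-product} gives $\widehat{\mathcal{A}*\mathcal{B}}_{(k)} = \widehat{\mathcal{A}}_{(k)}\widehat{\mathcal{B}}_{(k)}$, and the earlier lemma records how transpose, slice transpose, reverse, Moore--Penrose inverse, and square root each act slicewise on $\widehat{(\cdot)}_{(k)}$. My plan is to verify every identity slice-by-slice in the Fourier domain, where each statement collapses to a textbook matrix-Kronecker identity, and then apply the inverse DFT to lift the equality back to tubal matrices. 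Crucially, a tubal identity $\mathcal{X}=\mathcal{Y}$ is equivalent to $\widehat{\mathcal{X}}_{(k)}=\widehat{\mathcal{Y}}_{(k)}$ for all $k$, so the reduction is lossless.

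Concretely, for (1) I would combine $\widehat{\text{vec}_{t}(\cdot)}_{(k)} = \text{vec}(\widehat{(\cdot)}_{(k)})$ with the classical $\text{vec}(ABC) = (C^{T}\otimes A)\text{vec}(B)$ and the identity $\widehat{\mathcal{C}^{ST}}_{(k)} = \widehat{\mathcal{C}}_{(k)}^{T}$. Items (2), (3), (5) and (7) reduce in the same way to the standard Kronecker identities for conjugate transpose / transpose / reverse, the mixed-product rule $(AB)\otimes(CD) = (A\otimes C)(B\otimes D)$, pseudoinverse, and inverse, respectively, applied to each $\widehat{\mathcal{A}}_{(k)}$ and $\widehat{\mathcal{B}}_{(k)}$. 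For (4), the Parseval-style identity $\|\mathcal{M}\|_{F}^{2} = \frac{1}{l}\sum_{k}\|\widehat{\mathcal{M}}_{(k)}\|_{F}^{2}$ together with $\|A\otimes B\|_{F}^{2} = \|A\|_{F}^{2}\|B\|_{F}^{2}$ gives the formula immediately. For (8) and (9), Proposition~\ref{sec2-pro1} characterises T-symmetric T-positive (semi)definiteness by the Hermitian positive (semi)definiteness of each $\widehat{\mathcal{A}}_{(k)}$, and orthogonal projectors are characterised similarly by $\widehat{\mathcal{P}}_{(k)}^{2}=\widehat{\mathcal{P}}_{(k)}$ with $\widehat{\mathcal{P}}_{(k)}$ Hermitian; the matrix Kronecker product preserves both properties slicewise, which closes the argument.

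The least automatic step is (6). After passing to the Fourier domain, $\text{bcirc}(\mathcal{C})$ is unitarily similar to the block-diagonal matrix with blocks $\widehat{\mathcal{C}}_{(k)}$, so $\lambda_{\min}(\text{bcirc}(\mathcal{A}\otimes_{t}\mathcal{B})) = \min_{k}\lambda_{\min}(\widehat{\mathcal{A}}_{(k)}\otimes\widehat{\mathcal{B}}_{(k)})$. The key ingredient is that for Hermitian positive semidefinite matrices the spectrum of $A\otimes B$ consists exactly of the pairwise products of eigenvalues of $A$ and $B$, whence $\lambda_{\min}(\widehat{\mathcal{A}}_{(k)}\otimes\widehat{\mathcal{B}}_{(k)}) = \lambda_{\min}(\widehat{\mathcal{A}}_{(k)})\,\lambda_{\min}(\widehat{\mathcal{B}}_{(k)})$; taking the minimum over $k$ of non-negative quantities gives $\min_{k}\lambda_{\min}(\widehat{\mathcal{A}}_{(k)})\lambda_{\min}(\widehat{\mathcal{B}}_{(k)}) \geq \bigl(\min_{k}\lambda_{\min}(\widehat{\mathcal{A}}_{(k)})\bigr)\bigl(\min_{k}\lambda_{\min}(\widehat{\mathcal{B}}_{(k)})\bigr)$, and this right-hand side equals $\lambda_{\min}(\text{bcirc}(\mathcal{A}))\lambda_{\min}(\text{bcirc}(\mathcal{B}))$. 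The step I would watch most carefully is this last inequality, which genuinely requires non-negative factors, so (6) is really meant under an implicit T-symmetric T-positive semidefinite hypothesis on $\mathcal{A}$ and $\mathcal{B}$ (consistent with the way it will be applied alongside (8)); I would either state that hypothesis explicitly or track the sign conditions with care.
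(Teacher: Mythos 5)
Your proposal is correct, and since the paper omits this proof entirely (``straightforward, but tedious''), your slicewise Fourier-domain reduction is exactly the argument the authors intend: it leans on the identities $\widehat{\mathcal{A}*\mathcal{B}}_{(k)}=\widehat{\mathcal{A}}_{(k)}\widehat{\mathcal{B}}_{(k)}$, $\widehat{\mathcal{A}\otimes_{t}\mathcal{B}}_{(k)}=\widehat{\mathcal{A}}_{(k)}\otimes\widehat{\mathcal{B}}_{(k)}$, and $\widehat{\text{vec}_{t}(\mathcal{A})}_{(k)}=\text{vec}(\widehat{\mathcal{A}}_{(k)})$ recorded immediately before the lemma, together with Proposition~\ref{sec2-pro1} and the uniqueness of the tubal Moore--Penrose inverse. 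Your caveat on item~(6) is well taken: the inequality $\min_{k}\lambda_{\min}(\widehat{\mathcal{A}}_{(k)})\lambda_{\min}(\widehat{\mathcal{B}}_{(k)})\geq\bigl(\min_{k}\lambda_{\min}(\widehat{\mathcal{A}}_{(k)})\bigr)\bigl(\min_{k}\lambda_{\min}(\widehat{\mathcal{B}}_{(k)})\bigr)$ genuinely needs the factors to be non-negative (and the eigenvalue-product description of $\lambda_{\min}(A\otimes B)$ needs Hermitian arguments), so item~(6) is only valid under the implicit T-symmetric T-positive semidefinite hypothesis, which is how the paper actually uses it.
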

\begin{proof}
The proof  is straightforward, but tedious, so we omit it here.
\end{proof}
\begin{definition}[\cite{tang2022sketch}]\rm 
	Let $\mathcal{M}\in \mathbb{K}^{n\times n}_{l}$ be a T-symmetric 
	T-positive definite tubal matrix. For any tubal vectors $\overrightarrow{\mathcal{X}}$, $\overrightarrow{\mathcal{Y}}\in \mathbb{K}^{n}_{l}$, their weighted inner product and the weighted induced norm are defined as
	$$\langle\overrightarrow{\mathcal{X}},\overrightarrow{\mathcal{Y}}\rangle _{\mathcal{M}}=\langle\mathcal{M}*\overrightarrow{\mathcal{X}},\overrightarrow{\mathcal{Y}}\rangle \quad\textrm{ and }\quad \|\overrightarrow{\mathcal{X}}\|_{\mathcal{M}}=\sqrt{\langle\overrightarrow{\mathcal{X}},\overrightarrow{\mathcal{X}}\rangle _{\mathcal{M}}},
	$$
	respectively. 
\end{definition}
\begin{definition}\rm
	Let $\mathcal{M}\in\mathbb{K}^{m\times m}_{l}$ and $\mathcal{N}\in\mathbb{K}^{n\times n}_{l}$ be T-symmetric T-positive definite tubal matrices. For any tubal matrix $\mathcal{ A}\in\mathbb{R}^{m\times n}_{l}$, define 
	\begin{align*}
		\|\mathcal{ A}\|_{F(\mathcal{ M},\mathcal{ N})}\overset{\text{def}}=\|\text{vec}_{t}(\mathcal{ A})\|_{(\mathcal{ N}^{ST}\otimes_{t} \mathcal{ M})}=\|\text{vec}_{t}(\mathcal{ A})\|_{(\mathcal{ N}^{R}\otimes_{t} \mathcal{ M})}.
	\end{align*}
 
In addition, for the T-symmetric T-positive semidefinite tubal matrices $\mathcal{M}\in\mathbb{K}^{m\times m}_{l}$ and $\mathcal{N}\in\mathbb{K}^{n\times n}_{l}$, we define 
$$\|\mathcal{ A}\|_{F(\mathcal{ M},\mathcal{ N})}\overset{\text{def}}=\|\text{vec}_{t}(\mathcal{ A})\|_{(\mathcal{ N}^{ST}\otimes_{t} \mathcal{ M})}=\|\text{vec}_{t}(\mathcal{ A})\|_{(\mathcal{ N}^{R}\otimes_{t} \mathcal{ M})},$$
where $\|\cdot\|_{(\mathcal{ D})}$ is the seminorm induced by a T-symmetric T-positive semidefinite tubal matrx $\mathcal{ D}$ \cite{tang2022sketch}. %, which is given in \cite{tang2022sketch}.
	\end{definition}
	
We can check that
	\begin{align*}
		\|\mathcal{ A}\|_{F(\mathcal{ M},\mathcal{ N})}^{2}&=\|\text{vec}_{t}(\mathcal{ A})\|_{(\mathcal{ N}^{ST}\otimes_{t} \mathcal{ M})}^{2}
		=\langle(\mathcal{ N}^{ST}\otimes_{t}  \mathcal{ M})*\text{vec}_{t}(\mathcal{ A}),\text{vec}_{t}(\mathcal{ A})\rangle\\
		%&=\langle((\mathcal{ N}^{ST})^{\frac{1}{2}}\otimes_{t}  \mathcal{ M}^{\frac{1}{2}})*((\mathcal{ N}^{ST})^{\frac{1}{2}}\otimes_{t}  \mathcal{ M}^{\frac{1}{2}})*\text{vec}_{t}(\mathcal{ A}),\text{vec}_{t}(\mathcal{ A})\rangle\\
		%&=\langle((\mathcal{ N}^{\frac{1}{2}})^{ST}\otimes_{t}  \mathcal{ M}^{\frac{1}{2}})*((\mathcal{ N}^{\frac{1}{2}})^{ST}\otimes_{t}  \mathcal{ M}^{\frac{1}{2}})*\text{vec}_{t}(\mathcal{ A}),\text{vec}_{t}(\mathcal{ A})\rangle\\
		%&=\langle((\mathcal{ N}^{\frac{1}{2}})^{ST}\otimes_{t}  \mathcal{ M}^{\frac{1}{2}})*\text{vec}_{t}(\mathcal{ A}),((\mathcal{ N}^{\frac{1}{2}})^{C}\otimes_{t}  \mathcal{ M}^{\frac{1}{2}})*\text{vec}_{t}(\mathcal{ A})\rangle\\
		&=\langle((\mathcal{ N}^{\frac{1}{2}})^{ST}\otimes_{t}  \mathcal{ M}^{\frac{1}{2}})*\text{vec}_{t}(\mathcal{ A}),((\mathcal{ N}^{\frac{1}{2}})^{ST}\otimes_{t}  \mathcal{ M}^{\frac{1}{2}})*\text{vec}_{t}(\mathcal{ A})\rangle\\
		%&=\|((\mathcal{ M}^{\frac{1}{2}})^{ST}\otimes_{t} \mathcal{ N}^{\frac{1}{2}})*\text{vec}_{t}(\mathcal{ A})\|_{F}^{2}\\
		&=\|\text{vec}_{t}(\mathcal{ M}^{\frac{1}{2}}*\mathcal{ A}*\mathcal{ N}^{\frac{1}{2}})\|_{F}^{2}
  =\|\mathcal{ M}^{\frac{1}{2}}*\mathcal{ A}*\mathcal{ N}^{\frac{1}{2}}\|_{F}^{2}.
	\end{align*}
%where the fifth equality is from the fact that $\langle\mathcal{ A}*\mathcal{ B},\mathcal{ C}\rangle=\langle\mathcal{ B},\mathcal{ A}^{T}*\mathcal{ C}\rangle$ for any tubal matrices $\mathcal{ A}$, $\mathcal{ B}$ and $\mathcal{ C}$.

Finally, we  give the definitions of two common used sketching tubal matrices.

\begin{definition}[Gaussian random tubal matrix \cite{zhang2018randomizedddd}]\label{guassrandom}\rm
	A tubal matrix $\mathcal{S} \in\mathbb{K}^{m\times \tau}_{l}$ is called a Gaussian random tubal matrix, if the elements of $\mathcal{S}_{(1)}$ satisfy the standard normal distribution, and other frontal slices are all zeros.
\end{definition}

\begin{definition}[random sampling tubal matrix \cite{tarzanagh2018fast}]\label{samplingrandom}\rm
	Assume that a random sampling is implemented for choosing $\tau$ lateral slices, one in each of independent and identical distributed (i.i.d.) trials.  A tubal matrix $\mathcal{S} \in\mathbb{K}^{m\times \tau}_{l}$ is called a random sampling tubal matrix, when % $\mathcal{S}_{(1)}$ has entries
	$\mathcal{S}_{(i,j,1)}=1%/\sqrt{\tau p_{i}}
	$ if the $i$-th lateral slice is picked in the $j$-th independent trial and $\mathcal{S}_{(i,j,1)}=0$ otherwise, and other frontal slices are all zeros.
\end{definition}

\section{The proposed methods}\label{subTESP}
In this section, we first detail the derivation of the TESP method and its convergence analysis. Then the adaptive variants are presented based on three adaptive sampling strategies, followed by their theoretical guarantees. 

\subsection{TESP method}\label{subsubTESP}
 Similar to the previous works \cite{gower2015randomized,gower2019adaptive, tang2022sketch}, we take the point which is closest to the current iteration $\mathcal{X}^{t}$ and solve a sketched version of the oringinal tensor equation (\ref{tensorequation}) as the next iteration $\mathcal{X}^{t+1}$, that is
$$\mathcal{X}^{t+1}=\mathop{\arg\min}\limits_{\mathcal{X}\in\mathbb{K}_{l}^{r\times s}}\|\mathcal{X}-\mathcal{X}^{t}\|^{2}_{F(\mathcal{M},\mathcal{N})}~~ \text{ s.t. }~~ \mathcal{S}^{T}*\mathcal{A}*\mathcal{X}*\mathcal{B}*\mathcal{V}=\mathcal{S}^{T}*\mathcal{C}*\mathcal{V},$$
where  $\mathcal{S}\in\mathbb{K}^{m\times\tau}_{l}$ and $\mathcal{V}\in\mathbb{K}^{n\times\zeta}_{l}$ with $\tau$ and $\zeta$ being sketch sizes are sketching tubal matrices which are drawn in an i.i.d. fashion from the fixed distributions $\mathfrak{D}_{\mathcal{S}}$  and $\mathfrak{D}_{\mathcal{V}}$, respectively, and $\mathcal{M}\in\mathbb{K}^{r\times r}_{l}$ and $\mathcal{N}\in\mathbb{K}^{s\times s}_{l}$ are T-symmetric T-positive definite tubal matrices. Based on the algebraic properties of t-product, we can get the following update formula of the TESP method:
\begin{align}
	\mathcal{X}^{t+1} =&\mathcal{X}^{t}-\mathcal{M}^{-1}*\mathcal{A}^{T}*\mathcal{S}*(\mathcal{S}^{T}*\mathcal{A}*\mathcal{M}^{-1}*\mathcal{A}^{T}*\mathcal{S})^{\dag}*\mathcal{S}^{T}*(\mathcal{A}*\mathcal{X}^{t}*\mathcal{B}\nonumber\\
	&
 -\mathcal{C})*\mathcal{V}*(\mathcal{V}^{T}*\mathcal{B}^{T}*\mathcal{ N}^{-1}*\mathcal{B}*\mathcal{V})^{\dag}*\mathcal{V}^{T}*\mathcal{B}^{T}*\mathcal{ N}^{-1}. \label{update}
\end{align}
The details of the method are summarized in Algorithm \ref{TESP}.
\begin{algorithm}[htbp]
\caption{TESP method}\label{TESP}
\begin{algorithmic}[1]
\State  \textbf{Input:}
$\mathcal{X}^{0}\in \mathbb{K}^{r\times s}_{l}$, $\mathcal{A}\in \mathbb{K}^{m\times r }_{l}$, $\mathcal{B}\in \mathbb{K}^{s\times n }_{l}$, $\mathcal{C}\in\mathbb{K}_{l}^{m\times n}$  
\State \textbf{Parameters:} 
    fixed distributions $\mathfrak{D}_{\mathcal{S}}$ and $\mathfrak{D}_{\mathcal{V}}$ over random tubal matrices, T-symmetric T-positive definite tubal matrices $\mathcal{M}\in \mathbb{K}^{r\times r}_{l}$ and $\mathcal{N}\in \mathbb{K}^{s\times s}_{l}$
\For{$t=0,1,\cdots$}
\State Sample independent copies $\mathcal{S}\sim \mathfrak{D}_{\mathcal{S}}$ and $\mathcal{V}\sim \mathfrak{D}_{\mathcal{V}}$
\State Compute $\mathcal{E}=\mathcal{S}*(\mathcal{S}^{T}*\mathcal{A}*\mathcal{M}^{-1}*\mathcal{A}^{T}*\mathcal{S})^{\dag}*\mathcal{S}^{T}$ and $\mathcal{G}=\mathcal{V}*(\mathcal{V}^{T}*\mathcal{B}^{T}*\mathcal{N}^{-1}*\mathcal{B}*\mathcal{V})^{\dag}*\mathcal{V}^{T}$
\State $\mathcal{X}^{t+1}=\mathcal{X}^{t}-\mathcal{M}^{-1}*\mathcal{A}^{T}*\mathcal{E}*(\mathcal{A}*\mathcal{X}^{t}*\mathcal{B}-\mathcal{C})*\mathcal{G}*\mathcal{B}^{T}*\mathcal{N}^{-1}$
\EndFor
\State  \textbf{Output:} last iterate $\mathcal{X}^{t+1}$
\end{algorithmic}
\end{algorithm}

\begin{remark}\rm
	The distributions $\mathfrak{D}_{\mathcal{S}}$ and $\mathfrak{D}_{\mathcal{V}}$, and the T-symmetric T-positive definite tubal matrices $\mathcal{M}$ and $\mathcal{N}$ are parameters of the  TESP method. Generally, $\mathfrak{D}_{\mathcal{S}}$ and $\mathfrak{D}_{\mathcal{V}}$ can be any continuous or discrete distributions, and $\mathcal{M}$ and $\mathcal{N}$ can be any T-symmetric T-positive definite tubal  matrices.
	By choosing different parameters, different results will be obtained. The details will be further discussed in Section {\ref{subspecialcase}}.
\end{remark}

Now, we present the convergence of the TESP method. 
\begin{theorem}\label{thmTESP}
	With the notation in Algorithm \ref{TESP}, assume that %Let $\mathcal{S}$ be a random tubal matrix drawn independently from a distribution $\mathfrak{D}$ at each iteration and such that
	$\mathbb{E}[\mathcal{Z}]$ and $\mathbb{E}[\mathcal{W}]$ are T-symmetric T-positive definite with probability $1$, where $\mathcal{Z}=\mathcal{M}^{-\frac{1}{2}}*\mathcal{A}^{T}*\mathcal{E}*\mathcal{A}*\mathcal{ M}^{-\frac{1}{2}}$ and $\mathcal{W}=\mathcal{N}^{-\frac{1}{2}}*\mathcal{B}*\mathcal{G}*\mathcal{B}^{T}*\mathcal{N}^{-\frac{1}{2}}$. Let $\mathcal{X}^{\star}$ satisfy $\mathcal{A}*\mathcal{X}^{\star}*\mathcal{B}=\mathcal{C}$. Then the iteration sequence $\{\mathcal{X}^{t}\}_{t=0}^{\infty}$ calculated by the TESP method, i.e., Algorithm \ref{TESP} , with initial iteration $\mathcal{X}^{0}$ satisfies 
	\begin{align*}%\label{sec3.1e5}
		\mathbb{E}\left[\|\mathcal{X}^{t}-\mathcal{X}^{\star}\|_{F(\mathcal{M},\mathcal{N})}^{2}\mid\mathcal{X}^{0}\right]&\leq{\rho_{\text{TESP}}}^{t}\|\mathcal{X}^{0}-\mathcal{X}^{\star}\|_{F(\mathcal{M},\mathcal{ N})}^{2},
	\end{align*}
 where $\rho_{\text{TESP}}=1-\lambda_{\min}(\mathbb{E}[\text{\rm bcirc}(\mathcal{ W}\otimes_{t}\mathcal{ Z})])$.
\end{theorem}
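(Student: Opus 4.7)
The plan is to reduce the iteration to a clean recursion via the change of variables $\mathcal{Y}^{t}=\mathcal{M}^{1/2}*(\mathcal{X}^{t}-\mathcal{X}^{\star})*\mathcal{N}^{1/2}$, which turns the weighted error norm into a plain Frobenius norm, $\|\mathcal{X}^{t}-\mathcal{X}^{\star}\|_{F(\mathcal{M},\mathcal{N})}^{2}=\|\mathcal{Y}^{t}\|_{F}^{2}$. Subtracting $\mathcal{X}^{\star}$ from \eqref{update} and using $\mathcal{A}*\mathcal{X}^{\star}*\mathcal{B}=\mathcal{C}$ to rewrite $\mathcal{A}*\mathcal{X}^{t}*\mathcal{B}-\mathcal{C}$ as $\mathcal{A}*(\mathcal{X}^{t}-\mathcal{X}^{\star})*\mathcal{B}$, and then multiplying on the left by $\mathcal{M}^{1/2}$ and on the right by $\mathcal{N}^{1/2}$, I would arrive at $\mathcal{Y}^{t+1}=\mathcal{Y}^{t}-\mathcal{Z}*\mathcal{Y}^{t}*\mathcal{W}$ with the $\mathcal{Z}$ and $\mathcal{W}$ defined in the statement.

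Next I would verify that $\mathcal{Z}$ and $\mathcal{W}$ are orthogonal projectors. Writing $\mathcal{U}=\mathcal{M}^{-1/2}*\mathcal{A}^{T}*\mathcal{S}$, the T-symmetry of $\mathcal{M}^{1/2}$ gives $\mathcal{U}^{T}=\mathcal{S}^{T}*\mathcal{A}*\mathcal{M}^{-1/2}$ and $\mathcal{U}^{T}*\mathcal{U}=\mathcal{S}^{T}*\mathcal{A}*\mathcal{M}^{-1}*\mathcal{A}^{T}*\mathcal{S}$, so $\mathcal{Z}=\mathcal{U}*(\mathcal{U}^{T}*\mathcal{U})^{\dag}*\mathcal{U}^{T}$ is the orthogonal projector onto $\mathbf{Range}(\mathcal{U})$ by the remark following the definition of orthogonal projector; the same argument handles $\mathcal{W}$. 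Then Lemma \ref{sub1lem4} applied to $\mathcal{Y}^{t}$ with projectors $\mathcal{Z}$ and $\mathcal{W}$ gives the Pythagorean identity
$$\|\mathcal{Y}^{t+1}\|_{F}^{2}=\|\mathcal{Y}^{t}\|_{F}^{2}-\|\mathcal{Z}*\mathcal{Y}^{t}*\mathcal{W}\|_{F}^{2}.$$

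I would then lower bound $\mathbb{E}[\|\mathcal{Z}*\mathcal{Y}^{t}*\mathcal{W}\|_{F}^{2}\mid \mathcal{Y}^{t}]$. The identity $\text{vec}_{t}(\mathcal{Z}*\mathcal{Y}^{t}*\mathcal{W})=(\mathcal{W}^{ST}\otimes_{t}\mathcal{Z})*\text{vec}_{t}(\mathcal{Y}^{t})$, together with the observation that $\mathcal{W}^{ST}\otimes_{t}\mathcal{Z}$ is itself an orthogonal projector (T-symmetry, because $\mathcal{W}^{T}=\mathcal{W}$ forces $(\mathcal{W}^{ST})^{T}=\mathcal{W}^{R}=\mathcal{W}^{ST}$, and idempotency both transfer through $\otimes_{t}$), rewrites the squared norm as the quadratic form $\langle\text{vec}_{t}(\mathcal{Y}^{t}),(\mathcal{W}^{ST}\otimes_{t}\mathcal{Z})*\text{vec}_{t}(\mathcal{Y}^{t})\rangle$. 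The independence of $\mathcal{S}$ and $\mathcal{V}$, hence of $\mathcal{Z}$ and $\mathcal{W}$, gives $\mathbb{E}[\mathcal{W}^{ST}\otimes_{t}\mathcal{Z}]=\mathbb{E}[\mathcal{W}]^{ST}\otimes_{t}\mathbb{E}[\mathcal{Z}]$, which is T-symmetric T-positive definite by hypothesis. A slice-by-slice Rayleigh estimate in the Fourier domain (using Parseval and the fact that $\text{bcirc}$ block-diagonalises to the $\widehat{\cdot}_{(k)}$ blocks) then yields
$$\mathbb{E}\bigl[\|\mathcal{Z}*\mathcal{Y}^{t}*\mathcal{W}\|_{F}^{2}\bigm|\mathcal{Y}^{t}\bigr]\geq\lambda_{\min}\bigl(\mathbb{E}[\text{bcirc}(\mathcal{W}\otimes_{t}\mathcal{Z})]\bigr)\|\mathcal{Y}^{t}\|_{F}^{2},$$
after using that $A^{T}\otimes B$ and $A\otimes B$ share the same spectrum to strip the slice transpose off $\mathcal{W}$ in the rate. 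Combining this with the Pythagorean identity gives the one-step contraction with factor $\rho_{\text{TESP}}$, and the tower property plus induction over $t$ deliver the advertised geometric decay.

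The main obstacle I anticipate is the careful bookkeeping connecting the three transpositions $T$, $ST$, $R$. The rate is stated in terms of $\mathcal{W}\otimes_{t}\mathcal{Z}$, whereas the natural quadratic form coming out of t-vectorisation lives on $\mathcal{W}^{ST}\otimes_{t}\mathcal{Z}$; reconciling these needs the Fourier-domain identifications $\widehat{\mathcal{W}^{ST}}_{(k)}=\widehat{\mathcal{W}}_{(k)}^{T}$ and $\widehat{\mathcal{W}\otimes_{t}\mathcal{Z}}_{(k)}=\widehat{\mathcal{W}}_{(k)}\otimes\widehat{\mathcal{Z}}_{(k)}$, combined with the invariance of the spectrum under transposing a Kronecker factor. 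The T-positive-definiteness hypothesis on $\mathbb{E}[\mathcal{Z}]$ and $\mathbb{E}[\mathcal{W}]$ is exactly what forces $\rho_{\text{TESP}}<1$, closing the argument.
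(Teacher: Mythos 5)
Your proposal is correct and follows essentially the same route as the paper's proof: the substitution $\Gamma^{t}=\mathcal{M}^{1/2}*(\mathcal{X}^{t}-\mathcal{X}^{\star})*\mathcal{N}^{1/2}$ leading to $\Gamma^{t+1}=\Gamma^{t}-\mathcal{Z}*\Gamma^{t}*\mathcal{W}$, the Pythagorean identity via Lemma \ref{sub1lem4}, the t-vectorized quadratic form in $\mathcal{W}^{ST}\otimes_{t}\mathcal{Z}$ bounded below by $\lambda_{\min}$ of its expected $\text{bcirc}$, and the Fourier-slice argument identifying that rate with $\lambda_{\min}(\mathbb{E}[\text{bcirc}(\mathcal{W}\otimes_{t}\mathcal{Z})])$. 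The bookkeeping you flag around $T$, $ST$, $R$ is resolved exactly as you anticipate, so no gaps remain.
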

\begin{remark}\rm
It is easy to verify that $\text{bcirc}(\mathcal{ W}\otimes_{t}\mathcal{ Z})$ is an orthogonal projector and hence has eigenvalues $0$ or $1$. Combining Jensen's inequality, as well as the fact that both $A \longmapsto \lambda_{\max}(A)$ and $A \longmapsto -\lambda_{\min}(A)$ are convex on the symmetric matrices, we can get the spectrum of $\mathbb{E}[\text{bcirc}(\mathcal{ W}\otimes_{t}\mathcal{ Z})]$ is contained in $[0,1]$. In addition, since $\mathbb{E}[\mathcal{Z}]$ and $\mathbb{E}[\mathcal{W}]$ are T-symmetric T-positive definite, $\mathbb{E}[\text{bcirc}(\mathcal{ W}\otimes_{t}\mathcal{ Z})]$ is symmetric positive definite, thus $\lambda_{\min}(\mathbb{E}[\text{bcirc}(\mathcal{ W}\otimes_{t}\mathcal{ Z})])> 0$. All together, we have
$$0\leq\rho_{\text{TESP}}=1-\lambda_{\min}(\mathbb{E}[\text{bcirc}(\mathcal{ W}\otimes_{t}\mathcal{ Z})])<1,$$
which implies that the TESP method is convergent in expectation.
\end{remark}

\subsection{The adaptive TESP methods}\label{subsubATESP}
As pointed out in Section \ref{subsubTESP}, for each iteration of the TESP method, two sketching tubal matrices $\mathcal{S}$ and $\mathcal{V}$ need to be selected in an i.i.d. fashion from two pre-given ﬁxed distributions $\mathfrak{D}_{\mathcal{S}}$  and $\mathfrak{D}_{\mathcal{V}}$, respectively. Since the same distributions $\mathfrak{D}_{\mathcal{S}}$  and $\mathfrak{D}_{\mathcal{V}}$ are used in each iteration, this may lead to poor selection of $\mathcal{S}$ and $\mathcal{V}$ in some iterations, resulting in slow convergence. With this in mind, similar to \cite{gower2019adaptive,tang2022sketch}, we will give three adaptive sampling strategies, which utilize the information of the current iteration. It is worth mentioning that we will derive these adaptive sampling strategies on two finite sets of sketching tubal matrices preselected from two distributions. Since how to preselect  these two finite sets is not the focus of this paper, we assume that they have already been selected. 
Specifically, we suppose that $\boldsymbol{\mathcal{S}} = \{ \mathcal{S}_{i} \in \mathbb{K}^{m \times \tau}_{l},~\text{for}~ i=1,\cdots,q_{\mathcal{ S}},~q_{\mathcal{ S}} \in \mathbb{N}\}$ and $\boldsymbol{\mathcal{V}} = \{ \mathcal{V}_{j} \in \mathbb{K}^{n \times \zeta}_{l},~\text{for}~ j=1,\cdots,q_{\mathcal{ V}},~q_{\mathcal{ V}} \in \mathbb{N}\}$ are two finite sets of sketching tubal matrices chosen in advance, then our purpose is to give three adaptive samppling strategies to select $\mathcal{S}=\mathcal{S}_{i}$ and $\mathcal{V}=\mathcal{V}_{j}$ from $\boldsymbol{\mathcal{S}} $ and $\boldsymbol{\mathcal{V}}$, respectively. 

Before giving the adaptive sampling strategies, we first list the nonadaptive TESP (NTESP) method in Algorithm \ref{NTESP}, %which is derived from Algorithm \ref{TESP} under the assumption that two finite sets have already been selected in advance. In Algorithm \ref{NTESP}, it should be noted that 
where %for $q\in\mathbb{N}$, 
$\Delta_{q}$ with $q\in\mathbb{N}$ is defines as $\Delta_{q}\overset{\text{def}}{=}\{\mathbf{p}=(p_{1},\cdots,p_{q})^{T}\in\mathbb{R}^{q}\mid\sum_{i=1}^{q}p_{i}=1,p_{i}\geq0 \text{ for } i=1,\cdots,q\}$, $i\sim \mathbf{p}$ means that the index $i$ is sampled with the probability $p_{i}$, and $\mathbf{p}_{\mathcal{ S}}$ and $\mathbf{p}_{\mathcal{ V}}$ are two given probability distributions.

\begin{algorithm}[htbp]
\caption{NTESP method}\label{NTESP}
\begin{algorithmic}[1]
\State  \textbf{Input:} $\mathcal{X}^{0}\in \mathbb{K}^{r\times s}_{l}$, $\mathcal{A}\in \mathbb{K}^{m\times r }_{l}$, $\mathcal{B}\in \mathbb{K}^{s\times n }_{l}$, $\mathcal{C}\in\mathbb{K}_{l}^{m\times n}$, $\mathbf{p}_{\mathcal{ S}}\in \Delta_{q_{\mathcal{ S}}}$ and $\mathbf{p}_{\mathcal{ V}}\in \Delta_{q_{\mathcal{ V}}}$
\State  \textbf{Parameters:} two finite sets of sketching tubal matrices $\boldsymbol{\mathcal{S}}=[\mathcal{S}_{1},\cdots,\mathcal{S}_{q_{\mathcal{ S}}}] \in\mathbb{K}_{l}^{m\times q_{\mathcal{S}}\tau}$ and $\boldsymbol{\mathcal{V}}=[\mathcal{V}_{1},\cdots,\mathcal{V}_{q_{\mathcal{V}}}] \in\mathbb{K}_{l}^{n\times q_{\mathcal{ V}}\zeta}$, T-symmetric T-positive definite tubal matrices $\mathcal{M}\in \mathbb{K}^{r\times r}_{l}$ and $\mathcal{N}\in \mathbb{K}^{s\times s}_{l}$
\For{$t=0,1,\cdots$}
\State $i^{t}\sim \mathbf{p}_{\mathcal{ S}}$ and $j^{t}\sim \mathbf{p}_{\mathcal{ V}}$
\State Compute $\mathcal{E}_{i^{t}}=\mathcal{S}_{i^{t}}*(\mathcal{S}_{i^{t}}^{T}*\mathcal{A}*\mathcal{M}^{-1}*\mathcal{A}^{T}*\mathcal{S}_{i^{t}})^{\dag}*\mathcal{S}_{i^{t}}^{T}$ and $\mathcal{G}_{j^{t}}=\mathcal{V}_{j^{t}}*(\mathcal{V}_{j^{t}}^{T}*\mathcal{B}^{T}*\mathcal{N}^{-1}*\mathcal{B}*\mathcal{V}_{j^{t}})^{\dag}*\mathcal{V}_{j^{t}}^{T}$
\State $\mathcal{X}^{t+1}=\mathcal{X}^{t}-\mathcal{M}^{-1}*\mathcal{A}^{T}*\mathcal{E}_{i^{t}}*(\mathcal{A}*\mathcal{X}^{t}*\mathcal{B}-\mathcal{C})*\mathcal{G}_{j^{t}}*\mathcal{B}^{T}*\mathcal{N}^{-1}$
\EndFor
\State  \textbf{Output:} last iterate $\mathcal{X}^{t+1}$
\end{algorithmic}
\end{algorithm}

\subsubsection{Three adaptive sampling strategies and corresponding adaptive methods}\label{secadaptiveTESP}
%In this subsection, we will derive three adaptive sampling strategies in detail, and propose corresponding adaptive variants of the TESP method based on them. Compared with Algorithm \ref{NTESP}, the adaptive methods can signiﬁcantly reduce the number of iterations at the cost of adding a relatively small amount of computation per iteration and hence can reduce the overall computation cost.

 %$\mathcal{E}_{i}=\mathcal{S}_{i}*(\mathcal{S}_{i}^{T}*\mathcal{A}*\mathcal{M}^{-1}*\mathcal{A}^{T}*\mathcal{S}_{i})^{\dag}*\mathcal{S}_{i}^{T}$, $\mathcal{G}_{j}=\mathcal{V}_{j}*(\mathcal{V}_{j}^{T}*\mathcal{B}^{T}*\mathcal{ N}^{-1}*\mathcal{B}*\mathcal{V}_{j})^{\dag}*\mathcal{V}_{j}^{T}$, 
%Speciﬁcally,
We first let
 \begin{align}\label{ZandW}
 	\mathcal{Z}_{i^{t}}=\mathcal{M}^{-\frac{1}{2}}*\mathcal{A}^{T}*\mathcal{E}_{i^{t}}*\mathcal{A}*\mathcal{M}^{-\frac{1}{2}} \quad \text{and} \quad \mathcal{W}_{j^{t}}=\mathcal{N}^{-\frac{1}{2}}*\mathcal{B}*\mathcal{G}_{j^{t}}*\mathcal{B}^{T}*\mathcal{N}^{-\frac{1}{2}},
 \end{align}
where $\mathcal{E}_{i^{t}}$ and $\mathcal{G}_{j^{t}}$ are defined in Algorithm \ref{NTESP}. Then, we can verify that $\mathcal{Z}_{i^{t}}$ and $\mathcal{W}_{j^{t}}$ are orthogonal projectors. Thus,
applying the update fomula of the TESP method and the fact that $\mathcal{ A}*\mathcal{X}^{\star}*\mathcal{ B}=\mathcal{C}$, we have
\begin{align}
&\|\mathcal{X}^{t+1}-\mathcal{X}^{\star}\|_{F(\mathcal{M},\mathcal{ N})}^{2}=\|\mathcal{M}^{\frac{1}{2}}*(\mathcal{X}^{t+1}-\mathcal{X}^{\star})*\mathcal{N}^{\frac{1}{2}}\|_{F}^{2}\nonumber \\
=&\|\mathcal{M}^{\frac{1}{2}}*(\mathcal{X}^{t}-\mathcal{X}^{\star})*\mathcal{N}^{\frac{1}{2}}-\mathcal{Z}_{i^{t}}*\mathcal{M}^{\frac{1}{2}}*(\mathcal{X}^{t}-\mathcal{X}^{\star})*\mathcal{N}^{\frac{1}{2}}*\mathcal{W}_{j^{t}}\|_{F}^{2}\nonumber \\
=&\|\mathcal{M}^{\frac{1}{2}}*(\mathcal{X}^{t}-\mathcal{X}^{\star})*\mathcal{N}^{\frac{1}{2}}\|_{F}^{2}-\|\mathcal{Z}_{i^{t}}*\mathcal{M}^{\frac{1}{2}}*(\mathcal{X}^{t}-\mathcal{X}^{\star})*\mathcal{N}^{\frac{1}{2}}*\mathcal{W}_{j^{t}}\|_{F}^{2}\nonumber \\
=&\|\mathcal{X}^{t}-\mathcal{X}^{\star}\|_{F(\mathcal{M},\mathcal{ N})}^{2}-\|\mathcal{Z}_{i^{t}}*\mathcal{M}^{\frac{1}{2}}*(\mathcal{X}^{t}-\mathcal{X}^{\star})*\mathcal{N}^{\frac{1}{2}}*\mathcal{W}_{j^{t}}\|_{F}^{2}, \label{secloss}
\end{align}
where the third equality follows from Lemma \ref{sub1lem4}. Hence, we can conclude that the magnitude of $	\|\mathcal{X}^{t+1}-\mathcal{X}^{\star}\|_{F(\mathcal{M},\mathcal{ N})}^{2}$ is determined by $f_{i^{t},j^{t}}(\mathcal{X}^{t})\overset{\text{def}}=\|\mathcal{Z}_{i^{t}}*\mathcal{M}^{\frac{1}{2}}*(\mathcal{X}^{t}-\mathcal{X}^{\star})*\mathcal{N}^{\frac{1}{2}}*\mathcal{W}_{j^{t}}\|_{F}^{2}$. Consequently, in order to make the most progress in one iteration, we should pick the index pair $(i^{t},j^{t})$ corresponding to the largest sketched loss $f_{i^{t},j^{t}}(\mathcal{X}^{t})$.
Since $\mathcal{X}^{\star}$ is unknown in practice, we rewrite $f_{i^{t},j^{t}}(\mathcal{X}^{t})$ as
\begin{align*}
f_{i^{t},j^{t}}(\mathcal{X}^{t})%&=\|\mathcal{Z}_{i^{t}}*\mathcal{M}^{\frac{1}{2}}*(\mathcal{X}^{t}-\mathcal{X}^{\star})*\mathcal{N}^{\frac{1}{2}}*\mathcal{W}_{j^{t}}\|_{F}^{2}\\
=&\|\text{vec}_{t}(\mathcal{Z}_{i^{t}}*\mathcal{M}^{\frac{1}{2}}*(\mathcal{X}^{t}-\mathcal{X}^{\star})*\mathcal{N}^{\frac{1}{2}}*\mathcal{W}_{j^{t}})\|_{F}^{2}\\
%&=\|((\mathcal{N}^{\frac{1}{2}}*\mathcal{W}_{j^{t}})^{ST}\otimes_{t}(\mathcal{Z}_{i^{t}}*\mathcal{M}^{\frac{1}{2}}))*\text{vec}_{t}(\mathcal{X}^{t}-\mathcal{X}^{\star})\|_{F}^{2}\\
%&=\|((\mathcal{ W}_{j^{t}}^{ST}*(\mathcal{N}^{\frac{1}{2}})^{ST})\otimes_{t}(\mathcal{ Z}_{i^{t}}*\mathcal{M}^{\frac{1}{2}}))*\text{vec}_{t}(\mathcal{X}^{t}-\mathcal{X}^{\star})\|_{F}^{2}\\
%&=\Big\langle((\mathcal{ W}_{j^{t}}^{ST}*(\mathcal{N}^{\frac{1}{2}})^{ST})\otimes_{t}(\mathcal{ Z}_{i^{t}}*\mathcal{M}^{\frac{1}{2}}))*\text{vec}_{t}(\mathcal{X}^{t}-\mathcal{X}^{\star}),((\mathcal{ W}_{j^{t}}^{ST}\\
%&\quad*(\mathcal{N}^{\frac{1}{2}})^{ST})\otimes_{t}(\mathcal{ Z}_{i^{t}}*\mathcal{M}^{\frac{1}{2}}))*\text{vec}_{t}(\mathcal{X}^{t}-\mathcal{X}^{\star})\Big\rangle\\
=&\Big\langle((\mathcal{ W}_{j^{t}}^{ST}*(\mathcal{N}^{\frac{1}{2}})^{ST})\otimes_{t}(\mathcal{ Z}_{i^{t}}*\mathcal{M}^{\frac{1}{2}}))^{T}*((\mathcal{ W}_{j^{t}}^{ST}*(\mathcal{N}^{\frac{1}{2}})^{ST})\\
&\otimes_{t}(\mathcal{ Z}_{i^{t}}*\mathcal{M}^{\frac{1}{2}}))*\text{vec}_{t}(\mathcal{X}^{t}-\mathcal{X}^{\star}),\text{vec}_{t}(\mathcal{X}^{t}-\mathcal{X}^{\star})\Big\rangle\\
%&=\Big\langle(((\mathcal{N}^{\frac{1}{2}})^{C}*\mathcal{ W}_{j^{t}}^{C})\otimes_{t}(\mathcal{M}^{\frac{1}{2}}*\mathcal{ Z}_{i^{t}}))*((\mathcal{ W}_{j^{t}}^{ST}*(\mathcal{N}^{\frac{1}{2}})^{ST})\\
%&\quad\otimes_{t}(\mathcal{ Z}_{i^{t}}*\mathcal{M}^{\frac{1}{2}}))*\text{vec}_{t}(\mathcal{X}^{t}-\mathcal{X}^{\star}),\text{vec}_{t}(\mathcal{X}^{t}-\mathcal{X}^{\star})\Big\rangle\\
%&=\Big\langle(((\mathcal{N}^{\frac{1}{2}})^{ST}*\mathcal{ W}_{j^{t}}^{ST})\otimes_{t}(\mathcal{M}^{\frac{1}{2}}*\mathcal{ Z}_{i^{t}}))*((\mathcal{ W}_{j^{t}}^{ST}*(\mathcal{N}^{\frac{1}{2}})^{ST})\\
%&\quad\otimes_{t}(\mathcal{ Z}_{i^{t}}*\mathcal{M}^{\frac{1}{2}}))*\text{vec}_{t}(\mathcal{X}^{t}-\mathcal{X}^{\star}),\text{vec}_{t}(\mathcal{X}^{t}-\mathcal{X}^{\star})\Big\rangle\\
=&\Big\langle(((\mathcal{N}^{\frac{1}{2}})^{ST}*\mathcal{ W}_{j^{t}}^{ST}*(\mathcal{N}^{\frac{1}{2}})^{ST})\otimes_{t}(\mathcal{M}^{\frac{1}{2}}*\mathcal{ Z}_{i^{t}}*\mathcal{M}^{\frac{1}{2}}))
\\
&*\text{vec}_{t}(\mathcal{X}^{t}-\mathcal{X}^{\star}),\text{vec}_{t}(\mathcal{X}^{t}-\mathcal{X}^{\star})\Big\rangle\\
=&\big\langle((\mathcal{B}^{C}*\mathcal{G}_{j^{t}}^{ST}*\mathcal{B}^{ST})\otimes_{t}(\mathcal{A}^{T}*\mathcal{E}_{i^{t}}*\mathcal{A}))*\text{vec}_{t}(\mathcal{X}^{t}-\mathcal{X}^{\star}),
\\
&\text{vec}_{t}(\mathcal{X}^{t}-\mathcal{X}^{\star})\big\rangle\\
%&=\big\langle((\mathcal{B}^{C}\otimes_{t}\mathcal{A}^{T})*(\mathcal{G}_{j^{t}}^{ST}\otimes_{t}\mathcal{E}_{i^{t}})*(\mathcal{B}^{ST}\otimes_{t}\mathcal{A}))*\text{vec}_{t}(\mathcal{X}^{t}-\mathcal{X}^{\star}),\\
%&\quad\text{vec}_{t}(\mathcal{X}^{t}-\mathcal{X}^{\star})\big\rangle\\	&=\big\langle(\mathcal{G}_{j^{t}}^{ST}\otimes_{t}\mathcal{E}_{i^{t}})*(\mathcal{B}^{ST}\otimes_{t}\mathcal{A})*\text{vec}_{t}(\mathcal{X}^{t}-\mathcal{X}^{\star}),(\mathcal{B}^{ST}\otimes_{t}\mathcal{A})
%\\
%&\quad*\text{vec}_{t}(\mathcal{X}^{t}-\mathcal{X}^{\star})\big\rangle\\
=&\left\langle((\mathcal{G}_{j^{t}}^{ST}\otimes_{t}\mathcal{E}_{i^{t}})*\text{vec}_{t}(\mathcal{A}*(\mathcal{X}^{t}-\mathcal{X}^{\star})*\mathcal{B}),\text{vec}_{t}(\mathcal{A}*(\mathcal{X}^{t}-\mathcal{X}^{\star})*\mathcal{B})\right\rangle\\
=&\|\text{vec}_{t}(\mathcal{A}*\mathcal{X}^{t}*\mathcal{B}-\mathcal{C})\|_{\mathcal{G}_{j^{t}}^{ST}\otimes_{t}\mathcal{E}_{i^{t}}}^{2}=\|\mathcal{A}*\mathcal{X}^{t}*\mathcal{B}-\mathcal{C}\|_{F(\mathcal{E}_{i^{t}},\mathcal{G}_{j^{t}})}^{2},
\end{align*}
where the fourth equality is from (\ref{ZandW}). Thus, based on the above analysis, we can propose the ﬁrst adaptive sampling strategy as follows:
\begin{align}
	(i^{t},j^{t})=\mathop{\arg\max}_{i\in[q_{\mathcal{S}}], j\in[q_{\mathcal{V}}]}f_{i,j}(\mathcal{X}^{t})=\mathop{\arg\max}_{i\in[q_{\mathcal{S}}], j\in[q_{\mathcal{V}}]}\|\mathcal{A}*\mathcal{X}^{t}*\mathcal{B}-\mathcal{ C}\|_{F(\mathcal{E}_{i},\mathcal{G}_{j})}^{2},
\end{align}
which is called the max-distance selection rule. The corresponding adaptive TESP method is referred as ATESP-MD method for short, where A and MD stand for adaptive and max-distance, respectively, and the algorithm is summarized as the case 1 of Algorithm \ref{ATESP}.

\begin{algorithm}[htbp]
\caption{ATESP-(MD/PR/CS) method}\label{ATESP}
\begin{algorithmic}[1]
 \State  \textbf{Input:} $\mathcal{X}^{0}\in \mathbb{K}^{r\times s}_{l}$, $\mathcal{A}\in \mathbb{K}^{m\times r }_{l}$, $\mathcal{B}\in \mathbb{K}^{s\times n }_{l}$, $\mathcal{C}\in\mathbb{K}_{l}^{m\times n}$, $\mathbf{p}_{\mathcal{ S}}\in \Delta_{q_{\mathcal{ S}}}$, $\mathbf{p}_{\mathcal{ V}}\in \Delta_{q_{\mathcal{ V}}}$ and $\theta\in [0,1]$
\State  \textbf{Parameters:} two finite sets of sketching tubal matrices $\boldsymbol{\mathcal{S}}=[\mathcal{S}_{1},\cdots,\mathcal{S}_{q_{\mathcal{ S}}}] \in\mathbb{K}_{l}^{m\times q_{\mathcal{ S}}\tau}$ and $\boldsymbol{\mathcal{V}}=[\mathcal{V}_{1},\cdots,\mathcal{V}_{q_{\mathcal{ V}}}] \in\mathbb{K}_{l}^{n\times q_{\mathcal{ V}}\zeta}$, T-symmetric T-positive definite tubal matrices $\mathcal{M}\in \mathbb{K}^{r\times r}_{l}$ and $\mathcal{N}\in \mathbb{K}^{s\times s}_{l}$
		\For{$t=0,1,\cdots$}
		\State $f_{i,j}(\mathcal{X}^{t})=\|\mathcal{A}*\mathcal{X}^{t}*\mathcal{B}-\mathcal{ C}\|_{F(\mathcal{E}_{i},\mathcal{G}_{j})}^{2}$ for $i=1,\cdots,q_{\mathcal{ S}}$ and $j=1,\cdots,q_{\mathcal{ V}}$
		\State \textbf{switch} \emph{adaptive sampling strategies}
	   \State $\triangleright$	\texttt{switch}  means that each method corresponds to one case rather than choosing a different case for each iteration
	   \State \quad \quad \textbf{case 1} ~Max-distance selection rule (MD)
	   
	   \State \quad \quad$(i^{t},j^{t})=\mathop{\arg\max}\limits_{i\in[q_{\mathcal{ A}}], j\in[q_{\mathcal{B}}]}f_{i,j}(\mathcal{X}^{t})$
	   
	   \State\quad\quad  \textbf{case 2} ~Adaptive probabilities rule (PR)
	   
	  \State\quad\quad Calculate $\mathbf{p}_{\mathcal{ S},\mathcal{ V}}^{t}\in \Delta_{q_{\mathcal{ S}}q_{\mathcal{ V}}}$ such that, for $i=1,\cdots,q_{\mathcal{ S}}$ and $j=1,\cdots,q_{\mathcal{ V}}$, $p_{i,j}^{t}=f_{i,j}(\mathcal{X}^{t})/(\sum_{i=1}^{q_{\mathcal{ S}}}\sum_{j=1}^{q_{\mathcal{ V}}}f_{i,j}(\mathcal{X}^{t}))$ 
	   
	   \State \quad\quad$(i^{t},j^{t})\sim \mathbf{p}_{\mathcal{ S},\mathcal{ V}}^{t}$
	   
	   \State\quad\quad  \textbf{case 3} ~Capped sampling rule (CS)
	   
	  \State \quad\quad Determine the index set $\mathfrak{W}_{t}$, which is defined in (\ref{indexset})
	   
	   \State\quad\quad Calculate $\mathbf{p}_{\mathcal{ S},\mathcal{ V}}^{t}\in \Delta_{q_{\mathcal{ S}}q_{\mathcal{ V}}}$, which is defined in (\ref{cappedprob})
	   
	   \State\quad\quad $(i^{t},j^{t})\sim \mathbf{p}_{\mathcal{ S},\mathcal{ V}}^{t}$
	   \State  \textbf{end switch}
		\State Compute $\mathcal{E}_{i^{t}}=\mathcal{S}_{i^{t}}*(\mathcal{S}_{i^{t}}^{T}*\mathcal{A}*\mathcal{M}^{-1}*\mathcal{A}^{T}*\mathcal{S}_{i^{t}})^{\dag}*\mathcal{S}_{i^{t}}^{T}$ and $\mathcal{G}_{j^{t}}=\mathcal{V}_{j^{t}}*(\mathcal{V}_{j^{t}}^{T}*\mathcal{B}^{T}*\mathcal{N}^{-1}*\mathcal{B}*\mathcal{V}_{j^{t}})^{\dag}*\mathcal{V}_{j^{t}}^{T}$
		
		\State $\mathcal{X}^{t+1}=\mathcal{X}^{t}-\mathcal{M}^{-1}*\mathcal{A}^{T}*\mathcal{E}_{i^{t}}*(\mathcal{A}*\mathcal{X}^{t}*\mathcal{B}-\mathcal{C})*\mathcal{G}_{j^{t}}*\mathcal{B}^{T}*\mathcal{N}^{-1}$
		\EndFor
  \State  \textbf{Output:} last iterate $\mathcal{X}^{t+1}$
	\end{algorithmic}
\end{algorithm}

Next, we consider the expected decrease of  $\|\mathcal{X}^{t+1}-\mathcal{X}^{\star}\|_{F(\mathcal{M},\mathcal{ N})}^{2}$. Let $\mathbf{p}_{\mathcal{S},\mathcal{ V}}^{t}\in \Delta_{q_{\mathcal{ S}}q_{\mathcal{V}}}$ and $(i^{t},j^{t})\sim \mathbf{p}_{\mathcal{ S},\mathcal{ V}}^{t}$, where $\mathbf{p}_{\mathcal{ S},\mathcal{ V}}^{t}\overset{\text{def}}{=}(p^{t}_{1,1},\cdots,p^{t}_{q_{\mathcal{ S}},q_{\mathcal{ V}}})^{T}$ with $p_{i,j}^{t}=\mathbb{P}[\mathcal{S}_{i^{t}}=\mathcal{S}_{i},\mathcal{V}_{j^{t}}=\mathcal{V}_{j}\mid\mathcal{X}^{t}]$ 
for $i=1,\cdots,q_{\mathcal{ S}}$ and $j=1,\cdots,q_{\mathcal{ V}}$, i.e., $p_{i,j}^{t}$ is the probability of $\mathcal{S}_{i}$ and $\mathcal{V}_{j}$ being sampled at the $t$-th iteration. Thus, taking expectation conditioned on $\mathcal{X}^{t}$ in (\ref{secloss}), we have
\begin{align}
	\mathbb{E}[\|\mathcal{X}^{t+1}-\mathcal{X}^{\star}\|_{F(\mathcal{M},\mathcal{ N})}^{2}\mid\mathcal{X}^{t}]&=\|\mathcal{X}^{t}-\mathcal{X}^{\star}\|_{F(\mathcal{M},\mathcal{ N})}^{2}-\mathbb{E}_{(i,j)\sim \mathbf{p}_{\mathcal{S},\mathcal{ V}}^{t}}[f_{i,j}(\mathcal{X}^{t})]\label{expectation}\\
	&=\|\mathcal{X}^{t}-\mathcal{X}^{\star}\|_{F(\mathcal{M},\mathcal{ N})}^{2}-\sum_{i=1}^{q_{\mathcal{S}}}\sum_{j=1}^{q_{\mathcal{V}}}p_{i,j}^{t}f_{i,j}(\mathcal{X}^{t}),\nonumber
\end{align}
which implies that if we want $\mathbb{E}[\|\mathcal{X}^{t+1}-\mathcal{X}^{\star}\|_{F(\mathcal{M},\mathcal{ N})}^{2}\mid\mathcal{X}^{t}]$ to be as small as possible, we should sample the index pairs corresponding to larger sketched losses with higher probabilities. An intuitive way is to set the sampling probabilities proportional to the sketched losses and we call such strategy adaptive probabilities rule. The  corresponding adaptive TESP method is referred as ATESP-PR method for short, where PR stands for probabilities, and the algorithm is summarized as the case 2 of Algorithm \ref{ATESP}.

In addition, there is another commonly used way to define the sampling probability. The idea is to avoid sampling the index pairs corresponding to the smaller sketched losses by removing them, so that the probabilities of the index pairs corresponding to larger sketched losses being selected will increase. 
%for adaptive probabilities rule, it is obvious that we can make $\mathbf{E}_{i\sim \mathbf{p}^{t}}[f_{i}(\mathcal{X}^{t})]$  larger by increasing the probabilities corresponding to the larger sketched losses and reducing the probabilities corresponding to the smaller sketched losses. 
Specifically, we first define an index pair set 
\begin{align}\label{indexset}
	&\mathfrak{W}_{t}\overset{\text{\rm def}}{=}\nonumber\\
 &\left\{(i,j)\mid f_{i,j}(\mathcal{X}^{t})\geq\theta\mathop{\max}_{v\in[q_{\mathcal{ S}}], w\in[q_{\mathcal{V}}]}f_{v,w}(\mathcal{X}^{t})+(1-\theta)\mathbb{E}_{v\sim\mathbf{p}_{\mathcal{ S}},w\sim \mathbf{p}_{\mathcal{ V}}}[f_{v,w}(\mathcal{X}^{t})]\right\},
\end{align}
where $\mathbf{p}_{\mathcal{ S}}\in \Delta_{q_{\mathcal{ S}}}$, $\mathbf{p}_{\mathcal{ V}}\in \Delta_{q_{\mathcal{ V}}}$ and $\theta\in [0,1]$. Then, we set the probability $\mathbf{p}_{\mathcal{ S},\mathcal{ V}}^{t}\in \Delta_{q_{\mathcal{ S}}q_{\mathcal{V}}}$ such that %$\text{support}(\mathbf{p}^{t})\subset\mathfrak{W}_{t}$, that is
\begin{equation}\label{cappedprob}
	p_{i,j}^{t}=\left\{
	\begin{array}{lcl}
		\frac{f_{i,j}(\mathcal{X}^{t})}{\sum\limits_{(i,j)\in \mathfrak{W}_{t}}f_{i,j}(\mathcal{X}^{t})},& & (i,j)\in \mathfrak{W}_{t},\\
		0, & & (i,j)\notin \mathfrak{W}_{t}.
	\end{array} \right.
\end{equation}
We call this strategy the capped sampling rule, and the corresponding adaptive TESP method is referred as ATESP-CS method for short, where CS stands for capped sampling, and the algorithm is summarized as the case 3 of Algorithm \ref{ATESP}.

\subsubsection{Convergence analysis}
We now present the convergence results for the above proposed nonadaptive and adaptive TESP methods, i.e., the NTESP, ATESP-MD, ATESP-PR and  ATESP-CS methods. Before that, we first give two lemmas which are crucial to the convergence analysis in the following theorems.

\begin{lemma}\label{defdelta}
	With the notation in the NTESP, ATESP-MD, ATESP-PR, and ATESP-CS methods, let $\mathbf{p}_{\mathcal{ S}}\in \Delta_{q_{\mathcal{ S}}}$, $\mathbf{p}_{\mathcal{ V}}\in \Delta_{q_{\mathcal{ V}}}$  and define
	\begin{align}
		\delta_{\infty}^{2}(\mathcal{M},\mathcal{ N},\boldsymbol{\mathcal{S}},\boldsymbol{\mathcal{V}})\overset{\text{\rm def}}{=}&\mathop{\min}_{\overrightarrow{\mathcal{Y}}\in \mathbf{Range}(((\mathcal{N}^{-1})^{ST}*\mathcal{ B}^{C})\otimes_{t}(\mathcal{M}^{-1}*\mathcal{A}^{T}))}\mathop{\max}_{i\in[q_{\mathcal{S}}], j\in[q_{\mathcal{ V}}]}\nonumber\\
		&\frac{\|((\mathcal{N}^{\frac{1}{2}})^{ST}\otimes_{t} \mathcal{M}^{\frac{1}{2}})*\overrightarrow{\mathcal{Y}}\|_{\mathcal{ W}_{j}^{ST}\otimes_{t}\mathcal{ Z}_{i}}^{2}}{\|\overrightarrow{\mathcal{Y}}\|_{\mathcal{N}^{ST}\otimes_{t} \mathcal{M}}^{2}},\label{deinf}
	\end{align}
   \begin{align}
		\delta_{\mathbf{p}_{\mathcal{ S}}, \mathbf{p}_{\mathcal{ V}}}^{2}(\mathcal{M},\mathcal{ N},\boldsymbol{\mathcal{S}},\boldsymbol{\mathcal{V}})\overset{\text{\rm def}}{=}&\mathop{\min}_{\overrightarrow{\mathcal{Y}}\in \mathbf{Range}(((\mathcal{N}^{-1})^{ST}*\mathcal{ B}^{C})\otimes_{t}(\mathcal{M}^{-1}*\mathcal{A}^{T}))}\nonumber\\
		&\frac{\|((\mathcal{N}^{\frac{1}{2}})^{ST}\otimes_{t} \mathcal{M}^{\frac{1}{2}})*\overrightarrow{\mathcal{Y}}\|_{\mathbb{E}_{j\sim \mathbf{p}_{\mathcal{ V}}}[\mathcal{ W}_{j}^{ST}] \otimes_{t} \mathbb{E}_{i\sim \mathbf{p}_{\mathcal{ S}}}[\mathcal{ Z}_{i}]}^{2}}{\|\overrightarrow{\mathcal{Y}}\|_{\mathcal{N}^{ST}\otimes_{t} \mathcal{M}}^{2}},\label{deexpe}
	\end{align}
   where $\mathcal{ Z}_{i}$ and $\mathcal{W}_{j}$ are the same as $\mathcal{ Z}_{i^{t}}$ and $\mathcal{W}_{j^{t}}$ deﬁned in (\ref{ZandW}) except that $i^{t}$ and $j^{t}$ are replaced by $i$ and $j$, respectively. Let
	 $\mathcal{X}^{\star}$ satisfy $\mathcal{ A}*\mathcal{X}^{\star}*\mathcal{ B}=\mathcal{ C}$. Then the iteration sequence $\{\mathcal{X}^{t}\}_{t=0}^{\infty}$ calculated by any nonadaptive and adaptive TESP methods with initial iteration $\text{vec}_{t}(\mathcal{X}^{0})\in \mathbf{Range}((\mathcal{N}^{-1})^{ST}*\mathcal{ B}^{C}\otimes_{t}\mathcal{M}^{-1}*\mathcal{A}^{T})$ satisfies
	\begin{align}
		\mathop{\max}\limits_{i\in[q_{\mathcal{ S}}], j\in[q_{\mathcal{ V}}]}f_{i,j}(\mathcal{X}^{t})\geq 		\delta_{\infty}^{2}(\mathcal{M},\mathcal{ N},\boldsymbol{\mathcal{S}},\boldsymbol{\mathcal{V}})\|\mathcal{X}^{t}-\mathcal{X}^{\star}\|_{F(\mathcal{M},\mathcal{N})}^{2},\label{dein}\\
		\mathbb{E}_{i\sim \mathbf{p}_{\mathcal{ S}},j\sim \mathbf{p}_{\mathcal{ V}}}[f_{i,j}(\mathcal{X}^{t})]\geq 	\delta_{\mathbf{p}_{\mathcal{ S}}, \mathbf{p}_{\mathcal{ V}}}^{2}(\mathcal{M},\mathcal{ N},\boldsymbol{\mathcal{S}},\boldsymbol{\mathcal{V}})\|\mathcal{X}^{t}-\mathcal{X}^{\star}\|_{F(\mathcal{M},\mathcal{N})}^{2}.\label{deex}
	\end{align}
\end{lemma}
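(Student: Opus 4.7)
The plan is to identify $\overrightarrow{\mathcal{Y}}:=\text{vec}_{t}(\mathcal{X}^{t}-\mathcal{X}^{\star})$ with an element of the subspace over which the minima in (\ref{deinf}) and (\ref{deexpe}) are taken, so that both inequalities follow immediately from the definitions. For brevity, write $\mathcal{K}:=((\mathcal{N}^{-1})^{ST}*\mathcal{B}^{C})\otimes_{t}(\mathcal{M}^{-1}*\mathcal{A}^{T})$. First I would show that $\mathbf{Range}(\mathcal{K})$ is invariant under the iteration. Applying $\text{vec}_{t}(\mathcal{A}*\mathcal{B}*\mathcal{C})=(\mathcal{C}^{ST}\otimes_{t}\mathcal{A})*\text{vec}_{t}(\mathcal{B})$ to the increment produced by the update in the NTESP/ATESP methods, and simplifying $(\mathcal{G}_{j^{t}}*\mathcal{B}^{T}*\mathcal{N}^{-1})^{ST}=(\mathcal{N}^{-1})^{ST}*(\mathcal{B}^{T})^{ST}*\mathcal{G}_{j^{t}}^{ST}=(\mathcal{N}^{-1})^{ST}*\mathcal{B}^{C}*\mathcal{G}_{j^{t}}^{ST}$ together with the mixed-product identity $(\mathcal{A}*\mathcal{B})\otimes_{t}(\mathcal{C}*\mathcal{D})=(\mathcal{A}\otimes_{t}\mathcal{C})*(\mathcal{B}\otimes_{t}\mathcal{D})$, one obtains
\[
\text{vec}_{t}(\mathcal{X}^{t+1}-\mathcal{X}^{t})=-\mathcal{K}*(\mathcal{G}_{j^{t}}^{ST}\otimes_{t}\mathcal{E}_{i^{t}})*\text{vec}_{t}(\mathcal{A}*\mathcal{X}^{t}*\mathcal{B}-\mathcal{C})\in\mathbf{Range}(\mathcal{K}).
\]
Induction on $t$ starting from the hypothesis $\text{vec}_{t}(\mathcal{X}^{0})\in\mathbf{Range}(\mathcal{K})$ then gives $\text{vec}_{t}(\mathcal{X}^{t})\in\mathbf{Range}(\mathcal{K})$ for every $t$. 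Taking $\mathcal{X}^{\star}$ to be the unique solution of $\mathcal{A}*\mathcal{X}^{\star}*\mathcal{B}=\mathcal{C}$ whose t-vectorisation also lies in $\mathbf{Range}(\mathcal{K})$---equivalently, the minimum $\|\cdot\|_{F(\mathcal{M},\mathcal{N})}$-norm solution---secures $\overrightarrow{\mathcal{Y}}\in\mathbf{Range}(\mathcal{K})$ at every step.

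Next I would recast the sketched loss as a quadratic form in $\overrightarrow{\mathcal{Y}}$. Since $\mathcal{Z}_{i^{t}}$ and $\mathcal{W}_{j^{t}}$ are orthogonal projectors, the vec--Kronecker identity combined with the definition of $\|\cdot\|_{F(\cdot,\cdot)}$ gives
\[
f_{i,j}(\mathcal{X}^{t})=\bigl\|((\mathcal{N}^{\frac{1}{2}})^{ST}\otimes_{t}\mathcal{M}^{\frac{1}{2}})*\overrightarrow{\mathcal{Y}}\bigr\|_{\mathcal{W}_{j}^{ST}\otimes_{t}\mathcal{Z}_{i}}^{2},
\]
and directly from the definition $\|\mathcal{X}^{t}-\mathcal{X}^{\star}\|_{F(\mathcal{M},\mathcal{N})}^{2}=\|\overrightarrow{\mathcal{Y}}\|_{\mathcal{N}^{ST}\otimes_{t}\mathcal{M}}^{2}$.

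Both bounds are then immediate. For (\ref{dein}), dividing the two displayed quantities, maximising over $i,j$, and invoking (\ref{deinf}) at $\overrightarrow{\mathcal{Y}}\in\mathbf{Range}(\mathcal{K})$ gives a lower bound of $\delta_{\infty}^{2}(\mathcal{M},\mathcal{N},\boldsymbol{\mathcal{S}},\boldsymbol{\mathcal{V}})\|\mathcal{X}^{t}-\mathcal{X}^{\star}\|_{F(\mathcal{M},\mathcal{N})}^{2}$. For (\ref{deex}), I would push the expectation over $(i,j)$ into the weighted inner product by linearity and obtain
\[
\mathbb{E}_{i\sim\mathbf{p}_{\mathcal{S}},j\sim\mathbf{p}_{\mathcal{V}}}[f_{i,j}(\mathcal{X}^{t})]=\bigl\|((\mathcal{N}^{\frac{1}{2}})^{ST}\otimes_{t}\mathcal{M}^{\frac{1}{2}})*\overrightarrow{\mathcal{Y}}\bigr\|_{\mathbb{E}_{j\sim\mathbf{p}_{\mathcal{V}}}[\mathcal{W}_{j}^{ST}]\otimes_{t}\mathbb{E}_{i\sim\mathbf{p}_{\mathcal{S}}}[\mathcal{Z}_{i}]}^{2},
\]
whence (\ref{deex}) follows from (\ref{deexpe}). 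The main obstacle is the first paragraph: establishing that the solution set of $\mathcal{A}*\mathcal{X}*\mathcal{B}=\mathcal{C}$ meets $\mathbf{Range}(\mathcal{K})$ in exactly one point, so that the choice of $\mathcal{X}^{\star}$ is consistent across iterations. This should follow from consistency of (\ref{tensorequation}) together with the observation that the orthogonal complement of $\mathbf{Range}(\mathcal{K})$ under the $\mathcal{N}^{ST}\otimes_{t}\mathcal{M}$-inner product is precisely the null space of $\mathcal{X}\mapsto\mathcal{A}*\mathcal{X}*\mathcal{B}$---a t-product analogue of the standard matrix Kaczmarz argument, relying on $(\mathcal{A}\otimes_{t}\mathcal{B})^{T}=\mathcal{A}^{T}\otimes_{t}\mathcal{B}^{T}$ and on the Moore--Penrose formalism from the preceding lemma.
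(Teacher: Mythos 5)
Your proposal is correct and follows essentially the same route as the paper's proof: establish that $\text{vec}_{t}(\mathcal{X}^{t}-\mathcal{X}^{\star})$ stays in $\mathbf{Range}(((\mathcal{N}^{-1})^{ST}*\mathcal{B}^{C})\otimes_{t}(\mathcal{M}^{-1}*\mathcal{A}^{T}))$, rewrite $f_{i,j}(\mathcal{X}^{t})$ as the weighted quadratic form $\|((\mathcal{N}^{\frac{1}{2}})^{ST}\otimes_{t}\mathcal{M}^{\frac{1}{2}})*\text{vec}_{t}(\mathcal{X}^{t}-\mathcal{X}^{\star})\|_{\mathcal{W}_{j}^{ST}\otimes_{t}\mathcal{Z}_{i}}^{2}$, and then bound the resulting Rayleigh-type quotient below by the minimum defining $\delta_{\infty}^{2}$ and $\delta_{\mathbf{p}_{\mathcal{S}},\mathbf{p}_{\mathcal{V}}}^{2}$. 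Your additional care in the first paragraph (proving range invariance of the increment and pinning down the particular solution $\mathcal{X}^{\star}$ whose t-vectorisation lies in that range) merely fills in a step the paper asserts without detail.
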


\begin{lemma}\label{sizedelta}
	%With the notation in the NTESP, ATESP-MD, ATESP-PR, and ATESP-CS methods, 
	Let $\mathbf{p}_{\mathcal{ S}}\in \Delta_{q_{\mathcal{ S}}}$ and $\mathbf{p}_{\mathcal{ V}}\in \Delta_{q_{\mathcal{ V}}}$. Assume that the finite sets of sketching tubal matrices $\boldsymbol{\mathcal{S}}=[\mathcal{S}_{1},\cdots,\mathcal{S}_{q_{\mathcal{ S}}}]$ and $\boldsymbol{\mathcal{V}}=[\mathcal{V}_{1},\cdots,\mathcal{V}_{q_{\mathcal{ V}}}]$ respectively satisfy that $\mathbb{E}_{i\sim \mathbf{p}_{\mathcal{ S}}}[\mathcal{ Z}_{i}]$ and $\mathbb{E}_{j\sim \mathbf{p}_{\mathcal{ V}}}[\mathcal{ W}_{j}]$ are T-symmetric T-positive definite with probability $1$. Then %We then have the following relations:
	\begin{footnotesize}
	\begin{align*}
		0<\lambda_{\min}\left(\mathbb{E}_{i\sim \mathbf{p}_{\mathcal{ S}},j\sim \mathbf{p}_{\mathcal{V}}}[\text{\rm bcirc}(\mathcal{ W}_{j} \otimes_{t}\mathcal{ Z}_{i})]\right)
		=\delta_{\mathbf{p}_{\mathcal{ S}}, \mathbf{p}_{\mathcal{ V}}}^{2}(\mathcal{M},\mathcal{ N},\boldsymbol{\mathcal{S}},\boldsymbol{\mathcal{V}})\leq\delta_{\infty}^{2}(\mathcal{M},\mathcal{ N},\boldsymbol{\mathcal{S}},\boldsymbol{\mathcal{V}})\leq1.
	\end{align*}
	\end{footnotesize}
%\lambda_{\min}\left(\text{\rm bcirc}(\mathbb{E}_{j\sim \mathbf{p}_{\mathcal{ B}}}[\mathcal{ W}_{j}])\right)\lambda_{\min}\left(\text{\rm bcirc}( \mathbb{E}_{i\sim \mathbf{p}_{\mathcal{ A}}}[\mathcal{ Z}_{i}])\right)
\end{lemma}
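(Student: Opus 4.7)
The plan is to establish the chain in four steps: (i) $\delta_\infty^2 \leq 1$, (ii) $\delta_{\mathbf{p}_{\mathcal{S}},\mathbf{p}_{\mathcal{V}}}^2 \leq \delta_\infty^2$, (iii) $\lambda_{\min}(\mathbb{E}[\text{bcirc}(\mathcal{W}_j \otimes_t \mathcal{Z}_i)]) = \delta_{\mathbf{p}_{\mathcal{S}},\mathbf{p}_{\mathcal{V}}}^2$, and (iv) strict positivity of this eigenvalue. The unifying tool is the change of variable $u = ((\mathcal{N}^{1/2})^{ST} \otimes_t \mathcal{M}^{1/2}) * \overrightarrow{\mathcal{Y}}$, which turns the common denominator $\|\overrightarrow{\mathcal{Y}}\|_{\mathcal{N}^{ST}\otimes_t \mathcal{M}}^2$ into $\|u\|_F^2$, together with Proposition~\ref{sec2-pro1} and the t-Kronecker identities stated just before the lemma.

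For (i), after the substitution the numerator of (\ref{deinf}) becomes $\|(\mathcal{W}_j^{ST}\otimes_t \mathcal{Z}_i)*u\|_F^2$. Since $\mathcal{W}_j$ is T-symmetric we have $\mathcal{W}_j^{ST}=\mathcal{W}_j^R$, which still gives an orthogonal projector (idempotence survives $R$, and T-symmetry follows from $\mathcal{A}^{ST}=\mathcal{A}^R$); property~9 of the t-Kronecker lemma then makes $\mathcal{W}_j^{ST}\otimes_t \mathcal{Z}_i$ an orthogonal projector, and unfolding to bcirc gives $\|(\mathcal{W}_j^{ST}\otimes_t \mathcal{Z}_i)*u\|_F^2\leq\|u\|_F^2$. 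Maximizing over $(i,j)$ and minimizing over $\overrightarrow{\mathcal{Y}}$ preserve the bound. For (ii), at any fixed $\overrightarrow{\mathcal{Y}}$ the pointwise inequality $\mathbb{E}_{i\sim\mathbf{p}_{\mathcal{S}},j\sim\mathbf{p}_{\mathcal{V}}}[a_{i,j}]\leq\max_{i,j}a_{i,j}$ for the sketched numerators transfers through the shared denominator and the subsequent minimum.

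For (iii) I plan to use independence of $i$ and $j$ plus bilinearity of $\otimes_t$ to factor $\mathbb{E}_{i,j}[\mathcal{W}_j\otimes_t \mathcal{Z}_i]=\mathbb{E}[\mathcal{W}_j]\otimes_t \mathbb{E}[\mathcal{Z}_i]$, and then observe that the T-positive-definiteness of $\mathbb{E}[\mathcal{Z}_i]$ and $\mathbb{E}[\mathcal{W}_j]$ forces $\mathcal{A}$ to have full column rank and $\mathcal{B}$ to have full row rank (because $\mathbb{E}[\mathcal{Z}_i]=\mathcal{M}^{-1/2}*\mathcal{A}^T*\mathbb{E}[\mathcal{E}_i]*\mathcal{A}*\mathcal{M}^{-1/2}$ cannot be strictly positive on $\mathbb{K}_l^r$ otherwise). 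Consequently $\mathbf{Range}(\mathcal{M}^{-1}*\mathcal{A}^T)=\mathbb{K}_l^r$ and $\mathbf{Range}((\mathcal{N}^{-1})^{ST}*\mathcal{B}^{C})=\mathbb{K}_l^s$, so by property~7 of the t-Kronecker lemma the range constraint in (\ref{deexpe}) is vacuous. Applying the $u$-substitution rewrites (\ref{deexpe}) as an ordinary Rayleigh quotient over $\mathbb{K}_l^{rs}$ for $\text{bcirc}(\mathbb{E}[\mathcal{W}_j]^{ST}\otimes_t \mathbb{E}[\mathcal{Z}_i])$. In the Fourier domain the $ST$ corresponds to conjugating each Hermitian block $\widehat{\mathbb{E}[\mathcal{W}_j]}_{(k)}$, leaving its real spectrum (and hence every eigenvalue of the Kronecker product) invariant; so the minimum Rayleigh quotient equals $\lambda_{\min}(\text{bcirc}(\mathbb{E}[\mathcal{W}_j]\otimes_t \mathbb{E}[\mathcal{Z}_i]))=\lambda_{\min}(\mathbb{E}[\text{bcirc}(\mathcal{W}_j\otimes_t \mathcal{Z}_i)])$, where the last equality uses linearity of bcirc and of expectation. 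Claim (iv) is immediate from property~8 of the t-Kronecker lemma together with Proposition~\ref{sec2-pro1}: the t-Kronecker product of two T-positive-definite tubal matrices is T-positive definite, whose bcirc is symmetric positive definite, so $\lambda_{\min}>0$.

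I expect the main obstacle to be Step (iii), specifically two points: first, showing rigorously that the T-positive-definiteness hypothesis on the expected projectors reduces the nominal range restriction in (\ref{deexpe}) to the ambient space; second, justifying that the $ST$ sitting on $\mathbb{E}[\mathcal{W}_j]$ inside the weight of (\ref{deexpe}) does not perturb $\lambda_{\min}$, which requires the Fourier-domain conjugation argument and a careful bookkeeping of the $ST$, $R$, and $T$ operations as they interact with bcirc and $\otimes_t$. The remaining steps are essentially direct applications of the t-Kronecker identities and of standard Rayleigh-quotient reasoning.
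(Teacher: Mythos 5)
Your plan is correct and follows essentially the same route as the paper's proof: the shared denominator substitution reducing everything to Rayleigh quotients of $\text{bcirc}(\mathcal{W}_j^{ST}\otimes_t\mathcal{Z}_i)$, the orthogonal-projector bound for $\delta_\infty^2\le 1$, expectation-versus-maximum for the middle inequality, and the observation that T-positive-definiteness of the expected projectors makes the range constraint in (\ref{deexpe}) vacuous so that $\delta_{\mathbf{p}_{\mathcal{S}},\mathbf{p}_{\mathcal{V}}}^2$ is exactly the smallest eigenvalue. Your explicit Fourier-domain justification that the $ST$ on the weight does not alter $\lambda_{\min}$ is a point the paper handles the same way (in the proof of Theorem~\ref{thmTESP}) but states tersely here, so your extra care there is welcome rather than a deviation.
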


Next, we present the convergence guarantees of the NTESP, ATESP-MD, ATESP-PR, and ATESP-CS methods in turn.
	\begin{theorem}\label{thmNTESP}
	Let $\mathcal{X}^{\star}$ satisfy $\mathcal{ A}*\mathcal{X}^{\star}*\mathcal{B}=\mathcal{C}$. Then the iteration sequence $\{\mathcal{X}^{t}\}_{t=0}^{\infty}$ calculated by the NTESP method, i.e., Algorithm \ref{NTESP}, with initial iteration $\text{vec}_{t}(\mathcal{ X}^{0})\in \mathbf{Range}(((\mathcal{N}^{-1})^{ST}*\mathcal{ B}^{C})\otimes_{t}( \mathcal{ M}^{-1}*\mathcal{A}^{T}))$ satisfies 
	\begin{align*}
		&\mathbb{E}[\|\mathcal{X}^{t}-\mathcal{X}^{\star}\|_{F(\mathcal{M},\mathcal{ N})}^{2}\mid\mathcal{X}^{0}]\leq{\rho_{\text{NTESP}}}^{t}\|\mathcal{X}^{0}-\mathcal{X}^{\star}\|_{F(\mathcal{M},\mathcal{ N})}^{2},
	\end{align*}
	where $\rho_{\text{NTESP}}=1-\delta_{\mathbf{p}_{\mathcal{ S}}, \mathbf{p}_{\mathcal{ V}}}^{2}(\mathcal{M},\mathcal{ N},\boldsymbol{\mathcal{S}},\boldsymbol{\mathcal{V}})=1-\lambda_{\min}\left(\mathbb{E}_{i\sim \mathbf{p}_{\mathcal{ S}},j\sim \mathbf{p}_{\mathcal{V}}}[\text{\rm bcirc}(\mathcal{ W}_{j} \otimes_{t}\mathcal{ Z}_{i})]\right)$ and $\delta_{\mathbf{p}_{\mathcal{ S}}, \mathbf{p}_{\mathcal{ V}}}^{2}(\mathcal{M},\mathcal{ N},\boldsymbol{\mathcal{S}},\boldsymbol{\mathcal{V}})$ is as deﬁned in (\ref{deexpe}).
\end{theorem}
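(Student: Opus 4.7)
The plan is to view Theorem \ref{thmNTESP} as a short corollary of the two preparatory Lemmas \ref{defdelta} and \ref{sizedelta} together with the one-step identity (\ref{expectation}) that has already been derived in the text. First I would observe that in the NTESP method the sampling distribution is non-adaptive: $i^{t}\sim \mathbf{p}_{\mathcal{S}}$ and $j^{t}\sim\mathbf{p}_{\mathcal{V}}$ are drawn independently of the history and of one another, so $p_{i,j}^{t}=p_{\mathcal{S},i}\,p_{\mathcal{V},j}$, and consequently
\begin{equation*}
\mathbb{E}_{(i,j)\sim\mathbf{p}_{\mathcal{S},\mathcal{V}}^{t}}[f_{i,j}(\mathcal{X}^{t})]=\mathbb{E}_{i\sim\mathbf{p}_{\mathcal{S}},\,j\sim\mathbf{p}_{\mathcal{V}}}[f_{i,j}(\mathcal{X}^{t})].
\end{equation*}
Plugging this into the identity (\ref{expectation}) reduces the problem to producing a linear lower bound on the right-hand sketched loss in terms of $\|\mathcal{X}^{t}-\mathcal{X}^{\star}\|_{F(\mathcal{M},\mathcal{N})}^{2}$, which is exactly the content of inequality (\ref{deex}) of Lemma \ref{defdelta}.

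Next I would apply (\ref{deex}) with the constant $\delta_{\mathbf{p}_{\mathcal{S}},\mathbf{p}_{\mathcal{V}}}^{2}(\mathcal{M},\mathcal{N},\boldsymbol{\mathcal{S}},\boldsymbol{\mathcal{V}})$ defined in (\ref{deexpe}). Combining with (\ref{expectation}) yields the one-step contraction
\begin{equation*}
\mathbb{E}\!\left[\|\mathcal{X}^{t+1}-\mathcal{X}^{\star}\|_{F(\mathcal{M},\mathcal{N})}^{2}\mid\mathcal{X}^{t}\right]\le\bigl(1-\delta_{\mathbf{p}_{\mathcal{S}},\mathbf{p}_{\mathcal{V}}}^{2}(\mathcal{M},\mathcal{N},\boldsymbol{\mathcal{S}},\boldsymbol{\mathcal{V}})\bigr)\|\mathcal{X}^{t}-\mathcal{X}^{\star}\|_{F(\mathcal{M},\mathcal{N})}^{2}.
\end{equation*}
Taking expectations again, using the tower property, and iterating $t$ times gives the claimed rate ${\rho_{\text{NTESP}}}^{t}$. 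To produce the second form of $\rho_{\text{NTESP}}$ stated in the theorem, I would invoke Lemma \ref{sizedelta}, which identifies $\delta_{\mathbf{p}_{\mathcal{S}},\mathbf{p}_{\mathcal{V}}}^{2}$ with $\lambda_{\min}\!\left(\mathbb{E}_{i\sim\mathbf{p}_{\mathcal{S}},\,j\sim\mathbf{p}_{\mathcal{V}}}[\mathrm{bcirc}(\mathcal{W}_{j}\otimes_{t}\mathcal{Z}_{i})]\right)$, and also guarantees that this value is strictly positive, so $\rho_{\text{NTESP}}\in[0,1)$ and the bound is a genuine linear rate.

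The only step that requires care, and which I view as the main obstacle, is the hypothesis that $\text{vec}_{t}(\mathcal{X}^{t})$ remains in the range $\mathbf{Range}\bigl(((\mathcal{N}^{-1})^{ST}*\mathcal{B}^{C})\otimes_{t}(\mathcal{M}^{-1}*\mathcal{A}^{T})\bigr)$ for all $t\ge 0$: the bound (\ref{deex}) of Lemma \ref{defdelta} is only valid on that subspace. I would handle this by a simple induction, using the NTESP update formula and the fact that each increment $\mathcal{X}^{t+1}-\mathcal{X}^{t}$ has the form $\mathcal{M}^{-1}*\mathcal{A}^{T}*(\cdot)*\mathcal{B}^{T}*\mathcal{N}^{-1}$, so that $\text{vec}_{t}(\mathcal{X}^{t+1}-\mathcal{X}^{t})$ lies in the required t-Kronecker range; combined with the initial assumption on $\text{vec}_{t}(\mathcal{X}^{0})$, the invariance follows (a symmetric argument also places $\mathcal{X}^{\star}$ in the same coset, so the difference $\mathcal{X}^{t}-\mathcal{X}^{\star}$ sits in the subspace on which Lemma \ref{defdelta} is sharp). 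Once this invariance is in hand, the remainder of the proof is the purely mechanical chaining described above.
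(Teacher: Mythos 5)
Your proposal is correct and follows essentially the same route as the paper: the paper's proof also combines the one-step identity (\ref{expectation}) with the bound (\ref{deex}) of Lemma \ref{defdelta}, takes full expectation and unrolls, and obtains the eigenvalue form of $\rho_{\text{NTESP}}$ from Lemma \ref{sizedelta}. The range-invariance issue you flag is handled in the paper inside the proof of Lemma \ref{defdelta} rather than in the theorem's proof, so your explicit induction is just a more careful rendering of the same step.
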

\begin{remark}\rm
	The conclusion in  Theorem \ref{thmNTESP} is in line with that in Theorem \ref{thmTESP}. This is because the  probability distributions used in the NTESP method can be regarded as the special cases of the ones in TESP method.  
\end{remark}
	\begin{theorem}\label{thmATESPMD}
	Let $\mathcal{ X}^{\star}$ satisfy $\mathcal{ A}*\mathcal{ X}^{\star}*\mathcal{ B}=\mathcal{ C}$. Then the iteration sequence $\{\mathcal{X}^{t}\}_{t=0}^{\infty}$ calculated by the ATESP-MD method, i.e., the first case of Algorithm \ref{ATESP}, with initial iteration $\text{vec}_{t}(\mathcal{ X}^{0})\in  \mathbf{Range}(((\mathcal{N}^{-1})^{ST}*\mathcal{ B}^{C})\otimes_{t}( \mathcal{ M}^{-1}*\mathcal{A}^{T}))$ satisfies
	$$\mathbb{E}[\|\mathcal{ X}^{t}-\mathcal{ X}^{\star}\|_{F(\mathcal{M},\mathcal{ N})}^{2}\mid\mathcal{ X}^{0}]\leq{\rho_{\text{ATESP-MD}}}^{t}\|\mathcal{ X}^{0}-\mathcal{ X}^{\star}\|_{F(\mathcal{M},\mathcal{ N})}^{2}.$$
	where $\rho_{\text{ATESP-MD}}=1- \delta_{\infty}^{2}(\mathcal{M},\mathcal{ N},\boldsymbol{\mathcal{S}},\boldsymbol{\mathcal{V}})$ and $\delta_{\infty}^{2}(\mathcal{M},\mathcal{ N},\boldsymbol{\mathcal{S}},\boldsymbol{\mathcal{V}})$ is as deﬁned in (\ref{deinf}).
\end{theorem}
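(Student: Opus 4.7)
My plan is to combine the exact per-iteration identity (\ref{secloss}) with the deterministic lower bound (\ref{dein}) in Lemma \ref{defdelta}, which was set up precisely for the max-distance rule.

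First I would invoke the identity (\ref{secloss}), which holds for any selected pair $(i^{t},j^{t})$:
\begin{equation*}
\|\mathcal{X}^{t+1}-\mathcal{X}^{\star}\|_{F(\mathcal{M},\mathcal{N})}^{2}
=\|\mathcal{X}^{t}-\mathcal{X}^{\star}\|_{F(\mathcal{M},\mathcal{N})}^{2}-f_{i^{t},j^{t}}(\mathcal{X}^{t}).
\end{equation*}
Since the ATESP-MD method deterministically chooses $(i^{t},j^{t})=\arg\max_{i,j} f_{i,j}(\mathcal{X}^{t})$, this becomes
\begin{equation*}
\|\mathcal{X}^{t+1}-\mathcal{X}^{\star}\|_{F(\mathcal{M},\mathcal{N})}^{2}
=\|\mathcal{X}^{t}-\mathcal{X}^{\star}\|_{F(\mathcal{M},\mathcal{N})}^{2}-\max_{i\in[q_{\mathcal{S}}],\,j\in[q_{\mathcal{V}}]}f_{i,j}(\mathcal{X}^{t}).
\end{equation*}

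Next I would verify that the range hypothesis $\text{vec}_{t}(\mathcal{X}^{t})\in\mathbf{Range}(((\mathcal{N}^{-1})^{ST}*\mathcal{B}^{C})\otimes_{t}(\mathcal{M}^{-1}*\mathcal{A}^{T}))$ propagates across iterations, so that Lemma \ref{defdelta} is applicable at every step. Each update increment $\mathcal{X}^{t+1}-\mathcal{X}^{t}$ has the form $\mathcal{M}^{-1}*\mathcal{A}^{T}*(\cdot)*\mathcal{B}^{T}*\mathcal{N}^{-1}$, whose t-vectorization lies in the claimed range via the identity $\text{vec}_{t}(\mathcal{P}*\mathcal{Q}*\mathcal{R})=(\mathcal{R}^{ST}\otimes_{t}\mathcal{P})*\text{vec}_{t}(\mathcal{Q})$ and the product rules of $\otimes_{t}$. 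A one-step induction from the hypothesis on $\mathcal{X}^{0}$ then yields the range membership for every $t$.

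With the range condition in hand, inequality (\ref{dein}) of Lemma \ref{defdelta} gives
\begin{equation*}
\max_{i,\,j}f_{i,j}(\mathcal{X}^{t})\geq \delta_{\infty}^{2}(\mathcal{M},\mathcal{N},\boldsymbol{\mathcal{S}},\boldsymbol{\mathcal{V}})\,\|\mathcal{X}^{t}-\mathcal{X}^{\star}\|_{F(\mathcal{M},\mathcal{N})}^{2},
\end{equation*}
so substituting back yields the one-step contraction
\begin{equation*}
\|\mathcal{X}^{t+1}-\mathcal{X}^{\star}\|_{F(\mathcal{M},\mathcal{N})}^{2}\leq \rho_{\text{ATESP-MD}}\,\|\mathcal{X}^{t}-\mathcal{X}^{\star}\|_{F(\mathcal{M},\mathcal{N})}^{2}.
\end{equation*}
Because the max-distance rule is deterministic given $\mathcal{X}^{t}$, taking the conditional expectation is immediate, and iterating the bound $t$ times produces the claimed geometric rate.

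The main obstacle is the range-preservation induction: the deterministic bound in Lemma \ref{defdelta} is only valid while the iterates lie in the prescribed range, so I would need to carefully verify the relevant t-Kronecker product identities and the interaction of $(\cdot)^{ST}$ with $\mathcal{B}^{T}$ and $\mathcal{B}^{C}$. Once that is secured, the convergence proof reduces to the routine substitution above, and the fact that $\rho_{\text{ATESP-MD}}\in[0,1)$ is already guaranteed by Lemma \ref{sizedelta}.
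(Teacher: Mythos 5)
Your proposal is correct and follows essentially the same route as the paper: the paper's proof of Theorem \ref{thmATESPMD} likewise combines the identity (\ref{secloss}) under the max-distance rule with the bound (\ref{dein}) from Lemma \ref{defdelta}, then takes full expectation and unrolls the recursion. The only difference is that you make explicit the range-propagation induction that the paper leaves implicit inside Lemma \ref{defdelta}, which is a harmless (indeed welcome) addition.
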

\begin{remark}\rm
	According to Lemma \ref{sizedelta}, we have that the convergence factor of the ATESP-MD method is smaller than that of the NTESP method. That is,
	$$\rho_{\text{ATESP-MD}}=1- \delta_{\infty}^{2}(\mathcal{M},\mathcal{ N},\boldsymbol{\mathcal{S}},\boldsymbol{\mathcal{V}})\leq1-\delta_{\mathbf{p}_{\mathcal{ S}}, \mathbf{p}_{\mathcal{ V}}}^{2}(\mathcal{M},\mathcal{ N},\boldsymbol{\mathcal{S}},\boldsymbol{\mathcal{V}})=\rho_{\text{NTESP}}.$$
\end{remark}
\begin{theorem}\label{thmATESPRP}
	%With the notation in \cref{ATSP-PR},
	Let $\mathbf{u}_{\mathcal{ S}}=\left(\frac{1}{q_{\mathcal{ S}}},\cdots,\frac{1}{q_{\mathcal{ S}}}\right)^{T}\in \Delta_{q_{\mathcal{ S}}}$ and $\mathbf{u}_{\mathcal{ V}}=\left(\frac{1}{q_{\mathcal{ V}}},\cdots,\frac{1}{q_{\mathcal{ V}}}\right)^{T}\in \Delta_{q_{\mathcal{ V}}}$. 
	Let $\mathcal{ X}^{\star}$ satisfy $\mathcal{ A}*\mathcal{ X}^{\star}*\mathcal{ B}=\mathcal{ C}$. Then the iteration sequence $\{\mathcal{X}^{t}\}_{t=1}^{\infty}$ calculated by the ATESP-PR method, i.e., the second case of Algorithm \ref{ATESP}, with initial iteration $\text{vec}_{t}(\mathcal{ X}^{0})\in  \mathbf{Range}(((\mathcal{N}^{-1})^{ST}*\mathcal{ B}^{C})\otimes_{t}( \mathcal{ M}^{-1}*\mathcal{A}^{T}))$  satisfies
\begin{align*}	 
  \mathbb{E}\left[\|\mathcal{ X}^{t+1}-\mathcal{ X}^{\star}\|_{F(\mathcal{ M},\mathcal{ N})}^{2}\mid\mathcal{ X}^{t}\right]&\leq\rho_{\text{ATESP-PR}}\|\mathcal{ X}^{t}-\mathcal{ X}^{\star}\|_{F(\mathcal{ M},\mathcal{ N})}^{2},
 \end{align*} 
where $\rho_{\text{ATESP-PR}}=1-(1+q_{\mathcal{ S}}^{2}q_{\mathcal{ V}}^{2}\mathbf{Var}_{i\sim \mathbf{u}_{\mathcal{ S}}, j\sim \mathbf{u}_{\mathcal{ V}}}[p_{i,j}^{t}])\delta_{\mathbf{u}_{\mathcal{ S}}, \mathbf{u}_{\mathcal{ V}}}^{2}(\mathcal{M},\mathcal{ N},\boldsymbol{\mathcal{S}},\boldsymbol{\mathcal{V}})$ and  $\mathbf{Var}_{i\sim \mathbf{u}_{\mathcal{ S}}, j\sim \mathbf{u}_{\mathcal{ V}}}[\cdot]$ denotes the variance taken with respect to the uniform distributions $ \mathbf{u}_{\mathcal{ A}}$ and $\mathbf{u}_{\mathcal{ B}}$, i.e.,
$$\mathbf{Var}_{i \sim \mathbf{u}_{\mathcal{ S}}, j\sim \mathbf{u}_{\mathcal{ V}}}[v_{i,j}]=\frac{1}{q_{\mathcal{ S}}q_{\mathcal{ V}}}\sum_{i=1}^{q_{\mathcal{ S}}}\sum_{j=1}^{q_{\mathcal{ V}}}\left(v_{i,j}-\frac{1}{q_{\mathcal{ S}}q_{\mathcal{ V}}}\sum_{s=1}^{q_{\mathcal{ S}}}\sum_{r=1}^{q_{\mathcal{ V}}}v_{s,r}\right)^{2},~~~\forall ~\mathbf{v}\in \mathbb{R}^{q_{\mathcal{ S}}q_{\mathcal{ V}}}.$$
Furthermore, % we have that
\begin{align*}
\mathbb{E}\left[\|\mathcal{ X}^{t+1}-\mathcal{ X}^{\star}\|_{F(\mathcal{ M},\mathcal{ N})}^{2}\mid\mathcal{ X}^{1}\right]&\leq\left(\prod_{d=1}^{t}\rho_{d}\right)\mathbb{E}\left[\|\mathcal{ X}^{1}-\mathcal{ X}^{\star}\|_{F(\mathcal{ M},\mathcal{ N})}^{2}\mid\mathcal{ X}^{0}\right],
\end{align*}
where $\rho_{d}=1-\left(1+ \frac{\vert\Omega_{d}\vert}{q_{\mathcal{ S}}q_{\mathcal{ V}}}\right)\delta_{\mathbf{u}_{\mathcal{ S}}, \mathbf{u}_{\mathcal{ V}}}^{2}(\mathcal{M},\mathcal{ N},\boldsymbol{\mathcal{S}},\boldsymbol{\mathcal{V}})$, $\delta_{\mathbf{u}_{\mathcal{ S}}, \mathbf{u}_{\mathcal{ V}}}^{2}(\mathcal{M},\mathcal{ N},\boldsymbol{\mathcal{S}},\boldsymbol{\mathcal{V}})$ is defined as in (\ref{deexpe}) and $\Omega_{d}=\{(i,j)\vert f_{i,j}(\mathcal{ X}^{d})=0, i\in [q_{\mathcal{ S}}], j\in [q_{\mathcal{ V}}]\}$ with  $\vert\Omega_{d}\vert$ denoting its cardinality for $d=1,\cdots,t$.
\end{theorem}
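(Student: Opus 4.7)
My plan is to start from the one-step identity (\ref{expectation}),
\begin{align*}
\mathbb{E}[\|\mathcal{X}^{t+1}-\mathcal{X}^{\star}\|_{F(\mathcal{M},\mathcal{N})}^{2}\mid\mathcal{X}^{t}] = \|\mathcal{X}^{t}-\mathcal{X}^{\star}\|_{F(\mathcal{M},\mathcal{N})}^{2} - \sum_{i=1}^{q_{\mathcal{S}}}\sum_{j=1}^{q_{\mathcal{V}}} p_{i,j}^{t}\, f_{i,j}(\mathcal{X}^{t}),
\end{align*}
and to specialise to the ATESP-PR probabilities $p_{i,j}^{t}=f_{i,j}(\mathcal{X}^{t})/\sum_{k,l}f_{k,l}(\mathcal{X}^{t})$. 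Writing $\bar f^{t}=\mathbb{E}_{i\sim\mathbf{u}_{\mathcal{S}},j\sim\mathbf{u}_{\mathcal{V}}}[f_{i,j}(\mathcal{X}^{t})]=\frac{1}{q_{\mathcal{S}}q_{\mathcal{V}}}\sum_{i,j}f_{i,j}(\mathcal{X}^{t})$, the defining relation rearranges to $f_{i,j}(\mathcal{X}^{t})=q_{\mathcal{S}}q_{\mathcal{V}}\,\bar f^{t}\,p_{i,j}^{t}$, so the progress term collapses to $\sum_{i,j}p_{i,j}^{t}f_{i,j}(\mathcal{X}^{t})=q_{\mathcal{S}}q_{\mathcal{V}}\,\bar f^{t}\sum_{i,j}(p_{i,j}^{t})^{2}$.

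Next I would rewrite $\sum_{i,j}(p_{i,j}^{t})^{2}$ using the standard variance identity under the uniform law: since $\mathbb{E}_{i\sim\mathbf{u}_{\mathcal{S}},j\sim\mathbf{u}_{\mathcal{V}}}[p_{i,j}^{t}]=1/(q_{\mathcal{S}}q_{\mathcal{V}})$, one has $\mathbf{Var}_{i\sim\mathbf{u}_{\mathcal{S}},j\sim\mathbf{u}_{\mathcal{V}}}[p_{i,j}^{t}]=\frac{1}{q_{\mathcal{S}}q_{\mathcal{V}}}\sum_{i,j}(p_{i,j}^{t})^{2}-\frac{1}{q_{\mathcal{S}}^{2}q_{\mathcal{V}}^{2}}$, whence $\sum_{i,j}p_{i,j}^{t}f_{i,j}(\mathcal{X}^{t})=\bar f^{t}\bigl(1+q_{\mathcal{S}}^{2}q_{\mathcal{V}}^{2}\mathbf{Var}_{i\sim\mathbf{u}_{\mathcal{S}},j\sim\mathbf{u}_{\mathcal{V}}}[p_{i,j}^{t}]\bigr)$. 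Applying the lower bound (\ref{deex}) of Lemma \ref{defdelta} with $\mathbf{p}_{\mathcal{S}}=\mathbf{u}_{\mathcal{S}}$ and $\mathbf{p}_{\mathcal{V}}=\mathbf{u}_{\mathcal{V}}$ then gives $\bar f^{t}\geq\delta_{\mathbf{u}_{\mathcal{S}},\mathbf{u}_{\mathcal{V}}}^{2}(\mathcal{M},\mathcal{N},\boldsymbol{\mathcal{S}},\boldsymbol{\mathcal{V}})\|\mathcal{X}^{t}-\mathcal{X}^{\star}\|_{F(\mathcal{M},\mathcal{N})}^{2}$, and substituting this into the one-step identity proves the first claimed inequality with exactly the stated $\rho_{\text{ATESP-PR}}$.

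For the product bound, the key observation is that $(i,j)\in\Omega_{t}$ forces $f_{i,j}(\mathcal{X}^{t})=0$ and hence $p_{i,j}^{t}=0$. Keeping only the contributions from index pairs in $\Omega_{t}$ in the variance gives $\mathbf{Var}_{i\sim\mathbf{u}_{\mathcal{S}},j\sim\mathbf{u}_{\mathcal{V}}}[p_{i,j}^{t}]\geq\frac{1}{q_{\mathcal{S}}q_{\mathcal{V}}}\sum_{(i,j)\in\Omega_{t}}(1/(q_{\mathcal{S}}q_{\mathcal{V}}))^{2}=|\Omega_{t}|/(q_{\mathcal{S}}^{3}q_{\mathcal{V}}^{3})$, so $q_{\mathcal{S}}^{2}q_{\mathcal{V}}^{2}\mathbf{Var}_{i\sim\mathbf{u}_{\mathcal{S}},j\sim\mathbf{u}_{\mathcal{V}}}[p_{i,j}^{t}]\geq |\Omega_{t}|/(q_{\mathcal{S}}q_{\mathcal{V}})$ and therefore $\rho_{\text{ATESP-PR}}\leq\rho_{t}$ at every iteration $t$. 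Composing the per-iteration inequality for $d=1,\ldots,t$ and invoking the tower property of conditional expectation then delivers the product estimate.

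The main obstacle, in my view, is the bookkeeping step that rephrases the ATESP-PR progress term as $\bar f^{t}$ multiplied by the variance factor $1+q_{\mathcal{S}}^{2}q_{\mathcal{V}}^{2}\mathbf{Var}[p_{i,j}^{t}]$; once the identity $\sum p^{t}f=q_{\mathcal{S}}q_{\mathcal{V}}\bar f^{t}\sum (p^{t})^{2}$ is in place, the rest is a routine application of Lemma \ref{defdelta} and the zero pattern of $\Omega_{t}$.
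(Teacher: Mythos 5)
Your proposal is correct and follows essentially the same route as the paper's proof: both rewrite the ATESP-PR progress term $\sum_{i,j}p_{i,j}^{t}f_{i,j}(\mathcal{X}^{t})$ as $\bigl(1+q_{\mathcal{S}}^{2}q_{\mathcal{V}}^{2}\mathbf{Var}_{i\sim\mathbf{u}_{\mathcal{S}},j\sim\mathbf{u}_{\mathcal{V}}}[p_{i,j}^{t}]\bigr)\mathbb{E}_{\mathbf{u}}[f_{i,j}(\mathcal{X}^{t})]$ via the variance identity (you phrase it through $\mathbf{Var}[p^{t}]$ directly, the paper through $\mathbf{Var}[f]$ first, but these are the same computation), then invoke (\ref{deex}) with uniform weights and the lower bound $\mathbf{Var}[p_{i,j}^{t}]\geq\vert\Omega_{t}\vert/(q_{\mathcal{S}}^{3}q_{\mathcal{V}}^{3})$ coming from $p_{i,j}^{t}=0$ on $\Omega_{t}$. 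No gaps; the argument is sound.
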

\begin{remark}\rm
For the case of $t\geq 1$, the set $\Omega_{t}$ is not empty, i.e., $\vert\Omega_{t}\vert\geq 1$. This is because %The explanation is as follows. Considering that
\begin{align*}
&\mathcal{Z}_{i^{t-1}}*\mathcal{M}^{\frac{1}{2}}*(\mathcal{X}^{t}-\mathcal{X}^{\star})*\mathcal{N}^{\frac{1}{2}}*\mathcal{W}_{j^{t-1}}\\
=&\mathcal{Z}_{i^{t-1}}*\mathcal{M}^{\frac{1}{2}}*(\mathcal{X}^{t-1}-\mathcal{M}^{-\frac{1}{2}}*\mathcal{Z}_{i^{t-1}}*\mathcal{M}^{\frac{1}{2}}*(\mathcal{X}^{t-1}-\mathcal{X}^{\star})*\mathcal{N}^{\frac{1}{2}}*\mathcal{W}_{j^{t-1}}*\mathcal{N}^{-\frac{1}{2}}\\
&-\mathcal{X}^{\star})*\mathcal{N}^{\frac{1}{2}}*\mathcal{W}_{j^{t-1}}\\
%&=\mathcal{Z}_{i^{t-1}}*\mathcal{M}^{\frac{1}{2}}*(\mathcal{X}^{t-1}-\mathcal{X}^{\star})*\mathcal{N}^{\frac{1}{2}}*\mathcal{W}_{j^{t-1}}-\mathcal{Z}_{i^{t-1}}*\mathcal{M}^{\frac{1}{2}}*\mathcal{M}^{-\frac{1}{2}}*\mathcal{Z}_{i^{t-1}}*\mathcal{M}^{\frac{1}{2}}\\
%&\quad*(\mathcal{X}^{t-1}-\mathcal{X}^{\star})*\mathcal{N}^{\frac{1}{2}}*\mathcal{W}_{j^{t-1}}*\mathcal{N}^{-\frac{1}{2}}*\mathcal{N}^{\frac{1}{2}}*\mathcal{W}_{j^{t-1}}\\
	=&\mathcal{Z}_{i^{t-1}}*\mathcal{M}^{\frac{1}{2}}*(\mathcal{X}^{t-1}-\mathcal{X}^{\star})*\mathcal{N}^{\frac{1}{2}}*\mathcal{W}_{j^{t-1}}-\mathcal{Z}_{i^{t-1}}*\mathcal{M}^{\frac{1}{2}}*(\mathcal{X}^{t-1}-\mathcal{X}^{\star})*\mathcal{N}^{\frac{1}{2}}
 \\&*\mathcal{W}_{j^{t-1}}=0,
\end{align*}
leads to
\begin{align*}
	&f_{i^{t-1},j^{t-1}}(\mathcal{X}^{t})=\|((\mathcal{N}^{\frac{1}{2}})^{ST}\otimes_{t} \mathcal{M}^{\frac{1}{2}})*\text{vec}_{t}(\mathcal{X}^{t}-\mathcal{X}^{\star})\|_{\mathcal{ W}_{j^{t-1}}^{ST}\otimes_{t}\mathcal{ Z}_{i^{t-1}}}^{2}\\
	=&\Big\langle(\mathcal{ W}_{j^{t-1}}^{ST}\otimes_{t}\mathcal{ Z}_{i^{t-1}})*\text{vec}_{t}(\mathcal{M}^{\frac{1}{2}}*(\mathcal{X}^{t}-\mathcal{X}^{\star})*\mathcal{N}^{\frac{1}{2}}),\text{vec}_{t}(\mathcal{M}^{\frac{1}{2}}*(\mathcal{X}^{t}-\mathcal{X}^{\star})*\mathcal{N}^{\frac{1}{2}})\Big\rangle\\
	=&\Big\langle\text{vec}_{t}(\mathcal{ Z}_{i^{t-1}}*\mathcal{M}^{\frac{1}{2}}*(\mathcal{X}^{t}-\mathcal{X}^{\star})*\mathcal{N}^{\frac{1}{2}}*\mathcal{ W}_{j^{t-1}}),\text{vec}_{t}(\mathcal{M}^{\frac{1}{2}}*(\mathcal{X}^{t}-\mathcal{X}^{\star})*\mathcal{N}^{\frac{1}{2}})\Big\rangle\\
 =&0,
\end{align*}
which implies that $(i^{t-1},j^{t-1})\in\Omega_{t}$, that is, $\vert\Omega_{t}\vert\geq 1$.
\end{remark}
\begin{remark}\rm
Since 
\begin{align*}
\rho_{\text{ATESP-PR}}&=1-(1+q_{\mathcal{ S}}^{2}q_{\mathcal{ V}}^{2}\mathbf{Var}_{i\sim \mathbf{u}_{\mathcal{ S}}, j\sim \mathbf{u}_{\mathcal{ V}}}[p_{i,j}^{t}])\delta_{\mathbf{u}_{\mathcal{ S}}, \mathbf{u}_{\mathcal{ V}}}^{2}(\mathcal{M},\mathcal{ N},\boldsymbol{\mathcal{S}},\boldsymbol{\mathcal{V}})\\
&\leq1-\delta_{\mathbf{u}_{\mathcal{ S}}, \mathbf{u}_{\mathcal{ V}}}^{2}(\mathcal{M},\mathcal{ N},\boldsymbol{\mathcal{S}},\boldsymbol{\mathcal{V}}),
\end{align*}
	we can conclude that the convergence factor of the ATESP-PR method is smaller than that of the NTESP method with respect to uniform sampling, and how much smaller depends on the value of $1+q_{\mathcal{ S}}^{2}q_{\mathcal{ V}}^{2}\mathbf{Var}_{i\sim \mathbf{u}_{\mathcal{ S}}, j\sim \mathbf{u}_{\mathcal{ V}}}[p_{i,j}^{t}]$.
\end{remark}
	\begin{theorem}\label{thmATESPCS}
	Let $\mathcal{X}^{\star}$ satisfy $\mathcal{A}*\mathcal{X}^{\star}*\mathcal{B}=\mathcal{C}$.  Then the iteration sequence $\{\mathcal{X}^{t}\}_{t=1}^{\infty}$ calculated by the ATESP-CS method, i.e., the third case of Algorithm \ref{ATESP}, with initial iteration $\text{vec}_{t}(\mathcal{X}^{0})\in \mathbf{Range}(((\mathcal{N}^{-1})^{ST}*\mathcal{ B}^{C})\otimes_{t}( \mathcal{ M}^{-1}*\mathcal{A}^{T}))$ satisfies
\begin{align*}
\mathbb{E}[\|\mathcal{X}^{t}-\mathcal{X}^{\star}\|_{F(\mathcal{M},\mathcal{ N})}^{2}\mid\mathcal{X}^{0}]&\leq{\rho_{\text{ATESP-CS}}}^{t}\|\mathcal{X}^{0}-\mathcal{X}^{\star}\|_{F(\mathcal{M},\mathcal{ N})}^{2},
 \end{align*}
 where $\rho_{\text{ATESP-CS}}=1-\theta\delta_{\infty}^{2}(\mathcal{M},\mathcal{ N},\boldsymbol{\mathcal{S}},\boldsymbol{\mathcal{V}})-(1-\theta)\delta_{\mathbf{p}_{\mathcal{S}}, \mathbf{p}_{\mathcal{V}}}^{2}(\mathcal{M},\mathcal{ N},\boldsymbol{\mathcal{S}},\boldsymbol{\mathcal{V}})$.
\end{theorem}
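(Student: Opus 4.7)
The plan is to reduce the proof to the expected one-step decrease identity (\ref{expectation}) together with Lemma \ref{defdelta}. From (\ref{expectation}),
$$\mathbb{E}[\|\mathcal{X}^{t+1}-\mathcal{X}^\star\|_{F(\mathcal{M},\mathcal{N})}^2 \mid \mathcal{X}^t] = \|\mathcal{X}^t-\mathcal{X}^\star\|_{F(\mathcal{M},\mathcal{N})}^2 - \mathbb{E}_{(i,j)\sim \mathbf{p}_{\mathcal{S},\mathcal{V}}^t}[f_{i,j}(\mathcal{X}^t)],$$
so it suffices to lower-bound the conditional expected sketched loss by $\bigl(\theta\delta_\infty^2 + (1-\theta)\delta_{\mathbf{p}_{\mathcal{S}},\mathbf{p}_{\mathcal{V}}}^2\bigr)\|\mathcal{X}^t - \mathcal{X}^\star\|_{F(\mathcal{M},\mathcal{N})}^2$ and then to iterate the resulting geometric contraction.

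The key step is to observe that the capped probability vector (\ref{cappedprob}) is supported on $\mathfrak{W}_t$ with $\sum_{(i,j)\in\mathfrak{W}_t} p_{i,j}^t = 1$, so $\mathbb{E}_{(i,j)\sim \mathbf{p}_{\mathcal{S},\mathcal{V}}^t}[f_{i,j}(\mathcal{X}^t)]$ is a convex combination of the values $\{f_{i,j}(\mathcal{X}^t) : (i,j)\in\mathfrak{W}_t\}$. By the defining inequality (\ref{indexset}) of $\mathfrak{W}_t$, every term in this set is at least
$$\theta\max_{v\in[q_{\mathcal{S}}],\, w\in[q_{\mathcal{V}}]}f_{v,w}(\mathcal{X}^t) + (1-\theta)\,\mathbb{E}_{v\sim\mathbf{p}_{\mathcal{S}},\, w\sim\mathbf{p}_{\mathcal{V}}}[f_{v,w}(\mathcal{X}^t)],$$
so the convex combination inherits the same lower bound. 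Applying (\ref{dein}) of Lemma \ref{defdelta} to the maximum term and (\ref{deex}) to the expectation term then produces exactly the required estimate, valid provided $\text{vec}_{t}(\mathcal{X}^t)$ stays in $\mathbf{Range}(((\mathcal{N}^{-1})^{ST}*\mathcal{B}^C)\otimes_t(\mathcal{M}^{-1}*\mathcal{A}^T))$ at every step; this range-invariance is inherited from $\text{vec}_{t}(\mathcal{X}^0)$ by vectorising the update formula for $\mathcal{X}^{t+1}-\mathcal{X}^t$ via $\text{vec}_{t}(\mathcal{A}*\mathcal{B}*\mathcal{C})=(\mathcal{C}^{ST}\otimes_t\mathcal{A})*\text{vec}_{t}(\mathcal{B})$ and then applying the mixed-product identity $(\mathcal{A}*\mathcal{B})\otimes_t(\mathcal{C}*\mathcal{D})=(\mathcal{A}\otimes_t\mathcal{C})*(\mathcal{B}\otimes_t\mathcal{D})$ to exhibit the required range factorisation.

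Substituting the lower bound back into (\ref{expectation}), taking full expectation conditional on $\mathcal{X}^0$ via the tower property, and unrolling the one-step contraction $t$ times yields the claimed geometric decay with factor $\rho_{\text{ATESP-CS}}$. The only mild complication is the degenerate situation in which $f_{v,w}(\mathcal{X}^t)=0$ for every $(v,w)$, where (\ref{cappedprob}) is ill-defined; however in that case both the max and the expectation on the right-hand side vanish, so the iterate has already reached a point at which the bound is trivially satisfied and the induction continues. The main (and only) conceptual step is therefore the convex-combination-on-$\mathfrak{W}_t$ observation; once that is in place the theorem reduces to a direct plug-in of Lemma \ref{defdelta} followed by the standard unrolling.
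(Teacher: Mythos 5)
Your proposal is correct and follows essentially the same route as the paper's proof: reduce to the one-step identity (\ref{expectation}), note that the capped distribution is supported on $\mathfrak{W}_{t}$ so the expected sketched loss is a convex combination of terms each bounded below by $\theta\max_{v,w}f_{v,w}(\mathcal{X}^{t})+(1-\theta)\mathbb{E}_{v\sim\mathbf{p}_{\mathcal{S}},w\sim\mathbf{p}_{\mathcal{V}}}[f_{v,w}(\mathcal{X}^{t})]$, then invoke (\ref{dein}) and (\ref{deex}) of Lemma \ref{defdelta} and unroll. Your remarks on range-invariance and the degenerate all-zero-loss case are harmless additions that the paper leaves implicit.
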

\begin{remark}\rm
	The convergence factor of the ATESP-CS method is a convex combination of ones of the NTESP and ATESP-MD methods, and the closer $\theta$ approaches $1$, the smaller the convergence factor of the ATESP-CS method is. In our numerical experiments, we set $\theta=0.5$.
\end{remark}
\begin{remark}\rm
	From Lemma \ref{sizedelta}, we know that the convergence factors of the NTESP, ATESP-MD, ATESP-PR and ATESP-CS methods are smaller than $1$ under the assumption that $\mathbb{E}_{i\sim \mathbf{p}_{\mathcal{ S}}}[\mathcal{ Z}_{i}]$ and $\mathbb{E}_{j\sim \mathbf{p}_{\mathcal{ V}}}[\mathcal{ W}_{j}]$ are T-symmetric T-positive definite with probability $1$, which show that these methods are convergent.
\end{remark}
\section{The Fourier version of the TESP method}\label{subTESPF}

We first present an efficient implementation of the TESP method in the Fourier domain, i.e., Algorithm \ref{TESPF}, and then give its corresponding convergence guarantee.

\begin{algorithm}[htbp]
	\caption{TESP method in the Fourier domain}\label{TESPF} 
	\begin{algorithmic}[1]
\State \textbf{Input:} $\mathcal{X}^{0}\in \mathbb{K}^{r\times s}_{l}$, $\mathcal{A}\in \mathbb{K}^{m\times r }_{l}$, $\mathcal{B}\in \mathbb{K}^{s\times n}_{l}$, and $\mathcal{C}\in \mathbb{K}^{m\times n }_{l}$
\State \textbf{Parameters:} fixed distributions $\mathfrak{D}_{\mathcal{S}}$ and $\mathfrak{D}_{\mathcal{V}}$ over random tubal matrices, T-symmetric T-positive definite tubal matrices $\mathcal{M}\in \mathbb{K}^{r\times r}_{l}$ and $\mathcal{N}\in \mathbb{K}^{s\times s}_{l}$
		\State $\widehat{\mathcal{X}}^{0}= \texttt{fft}(\mathcal{X}^{0},[~],3)$, $\widehat{\mathcal{A}}=
		\texttt{fft}(\mathcal{A},[~],3)$, $\widehat{\mathcal{B}}= \texttt{fft}(\mathcal{B},[~],3)$, $\widehat{\mathcal{C}}= \texttt{fft}(\mathcal{C},[~],3)$,  $\widehat{\mathcal{M}}=\texttt{fft}(\mathcal{M},[~],3)$, $\widehat{\mathcal{N}}=\texttt{fft}(\mathcal{N},[~],3)$
		\For{$t=0,1,\cdots$}
		\State Sample independent copies $\mathcal{S}\sim \mathfrak{D}_{\mathcal{S}}$ and $\mathcal{V}\sim \mathfrak{D}_{\mathcal{V}}$ 
		\State $\widehat{\mathcal{S}}= \texttt{fft}(\mathcal{S},[~],3)$ and $\widehat{\mathcal{V}}= \texttt{fft}(\mathcal{V},[~],3)$
		\For{$k=1,\cdots,\lceil\frac{l+1}{2}\rceil$}
		\State  $\widehat{\mathcal{E}}_{(k)}=\widehat{\mathcal{S}}_{(k)}\left(\widehat{\mathcal{S}}_{(k)}^{H}\widehat{\mathcal{A}}_{(k)}\widehat{\mathcal{M}}_{(k)}^{-1}\widehat{\mathcal{A}}_{(k)}^{H}\widehat{\mathcal{S}}_{(k)}\right)^{\dag}\widehat{\mathcal{S}}_{(k)}^{H}$ and $\widehat{\mathcal{G}}_{(k)}=\widehat{\mathcal{V}}_{(k)}(\widehat{\mathcal{V}}_{(k)}^{H}\widehat{\mathcal{B}}_{(k)}^{H}\widehat{\mathcal{N}}_{(k)}^{-1}\widehat{\mathcal{B}}_{(k)}\widehat{\mathcal{V}}_{(k)})^{\dag}\widehat{\mathcal{V}}_{(k)}^{H}$
		\State $\widehat{\mathcal{X}}^{t+1}_{(k)}=\widehat{\mathcal{X}}^{t}_{(k)}-{\widehat{\mathcal{M}}_{(k)}}^{-1}{\widehat{\mathcal{A}}_{(k)}}^{H}\widehat{\mathcal{E}}_{(k)}\left(\widehat{\mathcal{A}}_{(k)}\widehat{\mathcal{X}}_{(k)}^{t}\widehat{\mathcal{B}}_{(k)}-\widehat{\mathcal{C}}_{(k)}\right)\widehat{\mathcal{G}}_{(k)}\widehat{\mathcal{B}}_{(k)}^{H}\widehat{\mathcal{N}}_{(k)}^{-1}$
		\EndFor
		\For{$k=\lceil\frac{l+1}{2}\rceil+1,\cdots,l$}
		\State $\widehat{\mathcal{X}}^{t+1}_{(k)}=\text{conj}(\widehat{\mathcal{X}}^{t+1}_{(l-k+2)})$		
		\EndFor
		\EndFor
		\State $\mathcal{X}^{t+1}=\texttt{ifft}\left(\widehat{\mathcal{X}}^{t+1},[~],3\right)$
  \State \textbf{Output:} last iterate $\mathcal{X}^{t+1}$
	\end{algorithmic}
\end{algorithm}

\begin{theorem}\label{thmTESPF}
	With the notation in Algorithm \ref{TESPF}, %Let $\mathcal{S}$ be a random tubal matrix drawn independently from a distribution $\mathfrak{D}$ at each iteration and such that
	assume that $\mathbb{E}[\text{bdiag}(\widehat{\mathcal{Z}})]$ and $\mathbb{E}[\text{bdiag}(\widehat{\mathcal{W}})]$ are Hermitian positive definite with probability $1$, where $\text{bdiag}(\widehat{\mathcal{Z}})$ and $\text{bdiag}(\widehat{\mathcal{W}})$ are the block diagonal matrices with each block respectively corresponding to the frontal slices of the tubal matrix $\widehat{\mathcal{Z}}=\texttt{\rm fft}(\mathcal{Z},[~],3)$ and $\widehat{\mathcal{W}}=\texttt{\rm fft}(\mathcal{W},[~],3)$ with $\mathcal{Z}$ and $\mathcal{W}$  are as defined in  Theorem \ref{thmTESP}. Let $\mathcal{X}^{\star}$ satisfy $\mathcal{A}*\mathcal{X}^{\star}*\mathcal{B}=\mathcal{C}$. Then the iteration sequence $\{\mathcal{X}^{t}\}_{t=1}^{\infty}$ calculated by Algorithm \ref{TESPF} with initial iteration $\mathcal{X}^{0}$ satisfies
\begin{align}\label{sec3.2e2}
\mathbb{E}\left[\|\mathcal{X}^{t}-\mathcal{X}^{\star}\|_{F(\mathcal{M},\mathcal{ N})}^{2}\mid\mathcal{X}^{0}\right]\leq\rho^{t}\|\mathcal{X}^{0}-\mathcal{X}^{\star}\|_{F(\mathcal{M},\mathcal{ N})}^{2},
\end{align}
where $\rho=1-\mathop{\min}_{k\in[l]}\lambda_{\min}(\mathbb{E}[\widehat{\mathcal{Z}}_{(k)}])\lambda_{\min}(\mathbb{E}[\widehat{\mathcal{W}}_{(k)}]$.
\end{theorem}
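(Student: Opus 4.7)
The plan is to exploit the fact that Algorithm \ref{TESPF} decouples the update in the Fourier domain into $l$ essentially independent matrix sketch-and-project iterations, one per frontal slice, and then to combine the per-slice contractions into a single bound via the weighted Frobenius norm identity.

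First, I would establish the Fourier representation of the error metric. Using that $\widehat{\mathcal{M}}_{(k)}$ is Hermitian positive definite (by Proposition \ref{sec2-pro1}) and the Plancherel-type identity implicit in $\widehat{\mathrm{vec}_t(\mathcal{A})}_{(k)}=\mathrm{vec}(\widehat{\mathcal{A}}_{(k)})$, together with the closed-form computation
\[
\|\mathcal{A}\|_{F(\mathcal{M},\mathcal{N})}^{2}=\|\mathcal{M}^{\frac{1}{2}}*\mathcal{A}*\mathcal{N}^{\frac{1}{2}}\|_{F}^{2}
\]
derived earlier, and the fact that $\mathrm{bcirc}$ is block-diagonalized by the DFT, I obtain
\[
\|\mathcal{X}^{t}-\mathcal{X}^{\star}\|_{F(\mathcal{M},\mathcal{N})}^{2}
=\frac{1}{l}\sum_{k=1}^{l}\bigl\|\widehat{\mathcal{M}}_{(k)}^{\frac{1}{2}}\bigl(\widehat{\mathcal{X}}_{(k)}^{t}-\widehat{\mathcal{X}}_{(k)}^{\star}\bigr)\widehat{\mathcal{N}}_{(k)}^{\frac{1}{2}}\bigr\|_{F}^{2}.
\]

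Next, I would analyze one slice at a time. Fix $k\in[l]$. The line-8 update in Algorithm \ref{TESPF} is exactly the complex-Hermitian analogue of the matrix sketch-and-project step for the equation $\widehat{\mathcal{A}}_{(k)}\widehat{\mathcal{X}}_{(k)}^{\star}\widehat{\mathcal{B}}_{(k)}=\widehat{\mathcal{C}}_{(k)}$. Letting
\[
\widehat{\mathcal{Z}}_{(k)}=\widehat{\mathcal{M}}_{(k)}^{-\frac{1}{2}}\widehat{\mathcal{A}}_{(k)}^{H}\widehat{\mathcal{E}}_{(k)}\widehat{\mathcal{A}}_{(k)}\widehat{\mathcal{M}}_{(k)}^{-\frac{1}{2}},\qquad
\widehat{\mathcal{W}}_{(k)}=\widehat{\mathcal{N}}_{(k)}^{-\frac{1}{2}}\widehat{\mathcal{B}}_{(k)}\widehat{\mathcal{G}}_{(k)}\widehat{\mathcal{B}}_{(k)}^{H}\widehat{\mathcal{N}}_{(k)}^{-\frac{1}{2}},
\]
a direct computation (mirroring the derivation of \eqref{secloss} but in the complex setting, using that $\widehat{\mathcal{Z}}_{(k)}$ and $\widehat{\mathcal{W}}_{(k)}$ are orthogonal projectors and hence the Pythagoras identity of Lemma \ref{sub1lem4} transfers) gives the per-slice decrease
\[
\bigl\|\widehat{\mathcal{M}}_{(k)}^{\frac{1}{2}}\!\bigl(\widehat{\mathcal{X}}_{(k)}^{t+1}\!-\!\widehat{\mathcal{X}}_{(k)}^{\star}\bigr)\widehat{\mathcal{N}}_{(k)}^{\frac{1}{2}}\bigr\|_{F}^{2}
=\bigl\|\widehat{\mathcal{M}}_{(k)}^{\frac{1}{2}}\!\bigl(\widehat{\mathcal{X}}_{(k)}^{t}\!-\!\widehat{\mathcal{X}}_{(k)}^{\star}\bigr)\widehat{\mathcal{N}}_{(k)}^{\frac{1}{2}}\bigr\|_{F}^{2}-\bigl\|\widehat{\mathcal{Z}}_{(k)}\widehat{\mathcal{M}}_{(k)}^{\frac{1}{2}}\!\bigl(\widehat{\mathcal{X}}_{(k)}^{t}\!-\!\widehat{\mathcal{X}}_{(k)}^{\star}\bigr)\widehat{\mathcal{N}}_{(k)}^{\frac{1}{2}}\widehat{\mathcal{W}}_{(k)}\bigr\|_{F}^{2}.
\]
Taking conditional expectation and using the standard chain of inequalities $\mathbb{E}[\|\widehat{\mathcal{Z}}_{(k)}Y\widehat{\mathcal{W}}_{(k)}\|_{F}^{2}]\ge\lambda_{\min}(\mathbb{E}[\widehat{\mathcal{Z}}_{(k)}])\lambda_{\min}(\mathbb{E}[\widehat{\mathcal{W}}_{(k)}])\|Y\|_{F}^{2}$ (via independence of the sketches and the trace/Kronecker argument valid for Hermitian PSD factors) yields
\[
\mathbb{E}\!\left[\bigl\|\widehat{\mathcal{M}}_{(k)}^{\frac{1}{2}}(\widehat{\mathcal{X}}_{(k)}^{t+1}\!-\!\widehat{\mathcal{X}}_{(k)}^{\star})\widehat{\mathcal{N}}_{(k)}^{\frac{1}{2}}\bigr\|_{F}^{2}\mid\widehat{\mathcal{X}}^{t}\right]
\le\rho_{k}\bigl\|\widehat{\mathcal{M}}_{(k)}^{\frac{1}{2}}(\widehat{\mathcal{X}}_{(k)}^{t}\!-\!\widehat{\mathcal{X}}_{(k)}^{\star})\widehat{\mathcal{N}}_{(k)}^{\frac{1}{2}}\bigr\|_{F}^{2},
\]
with $\rho_{k}=1-\lambda_{\min}(\mathbb{E}[\widehat{\mathcal{Z}}_{(k)}])\lambda_{\min}(\mathbb{E}[\widehat{\mathcal{W}}_{(k)}])$.

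Finally, I would sum the per-slice bounds over $k=1,\dots,l$, use $\sum_{k}\rho_{k}a_{k}\le(\max_{k}\rho_{k})\sum_{k}a_{k}$, convert back to $\|\cdot\|_{F(\mathcal{M},\mathcal{N})}$ via the identity in the first step, and iterate on $t$ by the tower property to reach \eqref{sec3.2e2} with $\rho=\max_{k}\rho_{k}=1-\min_{k}\lambda_{\min}(\mathbb{E}[\widehat{\mathcal{Z}}_{(k)}])\lambda_{\min}(\mathbb{E}[\widehat{\mathcal{W}}_{(k)}])$. The main obstacle I expect is the per-slice bound involving a product of two minimum eigenvalues: one needs to justify $\mathbb{E}[\|\widehat{\mathcal{Z}}_{(k)}Y\widehat{\mathcal{W}}_{(k)}\|_{F}^{2}]\ge\lambda_{\min}(\mathbb{E}[\widehat{\mathcal{Z}}_{(k)}])\lambda_{\min}(\mathbb{E}[\widehat{\mathcal{W}}_{(k)}])\|Y\|_{F}^{2}$ in the complex Hermitian setting, exploiting the independence of the sketches $\mathcal{S}$ and $\mathcal{V}$ (hence of $\widehat{\mathcal{Z}}_{(k)}$ and $\widehat{\mathcal{W}}_{(k)}$) together with the hypothesis that $\mathbb{E}[\mathrm{bdiag}(\widehat{\mathcal{Z}})]$ and $\mathbb{E}[\mathrm{bdiag}(\widehat{\mathcal{W}})]$ are Hermitian PD, ensuring each $\mathbb{E}[\widehat{\mathcal{Z}}_{(k)}]$ and $\mathbb{E}[\widehat{\mathcal{W}}_{(k)}]$ has strictly positive minimum eigenvalue. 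The conjugate-symmetry trick in the second \texttt{for} loop of Algorithm \ref{TESPF} does not affect the bound since the slices $k>\lceil(l+1)/2\rceil$ inherit the same $\rho_{k}$ as their conjugate counterparts.
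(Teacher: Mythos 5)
Your proposal is correct, but it is organized quite differently from the paper's proof. The paper treats Theorem \ref{thmTESPF} as an immediate corollary of Theorem \ref{thmTESP}: it observes that Hermitian positive definiteness of $\mathbb{E}[\text{bdiag}(\widehat{\mathcal{Z}})]$ and $\mathbb{E}[\text{bdiag}(\widehat{\mathcal{W}})]$ implies (via Proposition \ref{sec2-pro1}) that $\mathbb{E}[\mathcal{Z}]$ and $\mathbb{E}[\mathcal{W}]$ are T-symmetric T-positive definite, so the bound of Theorem \ref{thmTESP} applies verbatim, and then it simply rewrites the rate $1-\lambda_{\min}(\mathbb{E}[\text{bcirc}(\mathcal{W}\otimes_t\mathcal{Z})])$ as $1-\min_{k}\lambda_{\min}(\mathbb{E}[\widehat{\mathcal{W}}_{(k)}])\lambda_{\min}(\mathbb{E}[\widehat{\mathcal{Z}}_{(k)}])$ using the DFT block-diagonalization of $\text{bcirc}$, the independence of $\mathcal{S}$ and $\mathcal{V}$ (so the expectation of the Kronecker product factors), and the eigenvalue identity for Kronecker products of Hermitian positive semidefinite matrices. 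You instead re-derive the contraction slice by slice in the Fourier domain: Plancherel splits $\|\cdot\|_{F(\mathcal{M},\mathcal{N})}^{2}$ into $\frac{1}{l}\sum_k$ of per-slice weighted norms, each slice undergoes a complex matrix sketch-and-project step with its own rate $\rho_k$, and you bound the sum by $\max_k\rho_k$. The mathematical ingredients are the same — orthogonal projectors, Pythagoras, independence, and the per-slice Kronecker eigenvalue bound $\mathbb{E}\|\widehat{\mathcal{Z}}_{(k)}Y\widehat{\mathcal{W}}_{(k)}\|_F^2\geq\lambda_{\min}(\mathbb{E}[\widehat{\mathcal{Z}}_{(k)}])\lambda_{\min}(\mathbb{E}[\widehat{\mathcal{W}}_{(k)}])\|Y\|_F^2$, which is the "main obstacle" you correctly identify and which is exactly the step the paper performs inside the proof of Theorem \ref{thmTESP}. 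What your route buys is a self-contained argument that makes the decoupling into $\lceil\frac{l+1}{2}\rceil$ independent matrix problems fully explicit (the point of the remark following the theorem); what the paper's route buys is brevity, since the contraction has already been established once at the tubal level. Your handling of the conjugate-symmetric slices is also fine, since those slices carry the same spectral data as their counterparts and the rate takes a minimum over all $k\in[l]$ anyway.
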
	
\begin{remark}\rm
 %Algorithm \ref{TESPF} is actually an efficient implementation of Algorithm \ref{TESP}. 
 From Algorithm \ref{TESPF}, we can find that the TESP method is equivalent to using the MESP method to solve $\lceil\frac{l+1}{2}\rceil$ independent matrix equations. And if line $5$ of Algorithm \ref{TESPF} uses the sketching tubal matrices defined in Definitions \ref{guassrandom} or \ref{samplingrandom}, then $\widehat{\mathcal{S}}_{(k)}$ and $\widehat{\mathcal{V}}_{(k)}$ for $k=1,\cdots,\lceil\frac{l+1}{2}\rceil$ in line $8$ of Algorithm \ref{TESPF}  will be the same. As pointed out in \cite{ma2021randomized,tang2022sketch}, it would be better to use different sketching matrices for these $\lceil\frac{l+1}{2}\rceil$  independent matrix equations. %Therefore, we can directly use different sketching matrices in line $6$ of Algorithm \ref{TESPF}.
\end{remark}

\begin{remark}\rm
 For Theorem \ref{thmTESPF}, it is essentially equivalent to Theorem \ref{thmTESP}. The reason for giving it is to facilitate the convergence analysis for the TESP method when the random tubal matrices $\mathcal{ S}$ and $\mathcal{ V}$  have special discrete probability distributions specified as follows.
\end{remark}

%Next, we give a corollary in which we consider the random tubal matrices $\mathcal{S}$ and $\mathcal{V}$ in the TESP method have special discrete probability distributions. To this end, 
We first recall the definition of the complete discrete sampling matrix presented in \cite{gower2015randomized}: A sampling matrix $S$ is called a complete discrete sampling matrix if it satisfies three conditions, that is, the random matrix $S$ has a discrete distribution, $ S=S_{i}\in \mathbb{C}^{m\times \tau}$ with probability $p_{i}>0$ and $S_{i}^{H}A$ having full row rank for $i=1,\cdots,q_{S}$, and $\boldsymbol{S}=[S_{1},\cdots,S_{q_{S}}]\in \mathbb{C}^{m\times q_{S}\tau}$ is such that $A^{H}\boldsymbol{S}$ has full row rank.
\begin{corollary}\label{corspede}
	With the notation in Algorithm \ref{TESPF} and Theorem \ref{thmTESPF}, let $\mathcal{S}$ and $\mathcal{V}$ be discrete sampling tubal matrices satisfying that $\widehat{\mathcal{S}}_{(k)}$  and $\widehat{\mathcal{V}}_{(k)}$ for $k=1,\cdots,l$ are all complete discrete sampling matrices, where $\widehat{\mathcal{S}}=\texttt{\rm fft
	}(\mathcal{S},[~],3)$ and $\widehat{\mathcal{V}}=\texttt{\rm fft
	}(\mathcal{V},[~],3)$. Let $\mathcal{X}^{\star}$ satisfy $\mathcal{A}*\mathcal{X}^{\star}=\mathcal{B}$.  Then, when $\mathbb{P}[\mathcal{ S}=\mathcal{ S}_{i}]=\frac{\|\mathcal{M}^{-\frac{1}{2}}*\mathcal{A}^{T}*\mathcal{S}_{i}\|_{F}^{2}}{\|\mathcal{M}^{-\frac{1}{2}}*\mathcal{A}^{T}*\boldsymbol{\mathcal{S}}\|_{F}^{2}}$ and $\mathbb{P}[\mathcal{V}=\mathcal{ V}_{j}]=\frac{\|\mathcal{N}^{-\frac{1}{2}}*\mathcal{B}*\mathcal{V}_{j}\|_{F}^{2}}{\|\mathcal{N}^{-\frac{1}{2}}*\mathcal{B}*\boldsymbol{\mathcal{V}}\|_{F}^{2}}$ 
	with $\boldsymbol{\mathcal{S}}=[\mathcal{S}_{1},\cdots,\mathcal{S}_{q_{\mathcal{ S}}}]$ and $\boldsymbol{\mathcal{V}}=[\mathcal{V}_{1},\cdots,\mathcal{V}_{q_{\mathcal{ V}}}]$ for $i=1,\cdots,q_{\mathcal{ S}}$ and $j=1,\cdots,q_{\mathcal{ V}}$, the iteration sequence $\{\mathcal{X}^{t}\}_{t=1}^{\infty}$ calculated by Algorithm \ref{TESPF} with initial iteration $\mathcal{X}^{0}$ satisfies
	\begin{align}\label{sec3.2e4}
		\mathbb{E}\left[\left\|\mathcal{X}^{t}-\mathcal{X}^{\star}\right\|_{F(\mathcal{M},\mathcal{ N})}^{2}\mid\mathcal{X}^{0}\right]\leq\rho^{t}\left\|\mathcal{X}^{0}-\mathcal{X}^{\star}\right\|_{F(\mathcal{M},\mathcal{ N})}^{2},
	\end{align}
	where $\rho=1-\mathop{\min}\limits_{k\in[l]}\frac{\lambda_{\min}^{+}\left(\widehat{\boldsymbol{\mathcal{S}}}^{H}_{(k)}\widehat{\mathcal{A}}_{(k)}\widehat{\mathcal{M}}^{-1}_{(k)}\widehat{\mathcal{A}}^{H}_{(k)}\widehat{\boldsymbol{\mathcal{S}}}_{(k)}\right)}{\|\mathcal{M}^{-\frac{1}{2}}*\mathcal{A}^{T}*\boldsymbol{\mathcal{S}}\|_{F}^{2}}\cdot\frac{\lambda_{\min}^{+}\left(\widehat{\boldsymbol{\mathcal{V}}}^{H}_{(k)}\widehat{\mathcal{B}}^{H}_{(k)}\widehat{\mathcal{N}}^{-1}_{(k)}\widehat{\mathcal{B}}_{(k)}\widehat{\boldsymbol{\mathcal{V}}}_{(k)}\right)}{\|\mathcal{N}^{-\frac{1}{2}}*\mathcal{B}*\boldsymbol{\mathcal{V}}\|_{F}^{2}}$.
\end{corollary}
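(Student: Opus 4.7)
The plan is to derive the corollary as a direct application of Theorem \ref{thmTESPF}, by giving frequency-by-frequency lower bounds on $\lambda_{\min}(\mathbb{E}[\widehat{\mathcal{Z}}_{(k)}])$ and $\lambda_{\min}(\mathbb{E}[\widehat{\mathcal{W}}_{(k)}])$ that are tailored to the particular discrete distribution specified in the statement. Since $\mathcal{S}$ and $\mathcal{V}$ are drawn independently, it suffices to analyse the $\mathcal{S}$-side estimate in detail and then invoke the symmetric argument for $\mathcal{V}$.

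For fixed $k \in [l]$, introduce $F_{i,(k)} := \widehat{\mathcal{M}}_{(k)}^{-1/2} \widehat{\mathcal{A}}_{(k)}^{H} \widehat{\mathcal{S}}_{i,(k)}$, so that the $k$-th frequency slice of $\mathcal{Z}$ when $\mathcal{S}=\mathcal{S}_i$ is the orthogonal projector $\widehat{\mathcal{Z}}_{i,(k)} = F_{i,(k)}(F_{i,(k)}^{H} F_{i,(k)})^{\dagger} F_{i,(k)}^{H}$ onto $\mathbf{Range}(F_{i,(k)})$; hence $\mathbb{E}[\widehat{\mathcal{Z}}_{(k)}] = \sum_{i=1}^{q_{\mathcal{S}}} p_i\, \widehat{\mathcal{Z}}_{i,(k)}$. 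The workhorse operator inequality I will establish is $\|F_{i,(k)}\|_{F}^{2}\,\widehat{\mathcal{Z}}_{i,(k)} \succeq F_{i,(k)} F_{i,(k)}^{H}$, which drops out of decomposing any vector into components parallel and orthogonal to $\mathbf{Range}(F_{i,(k)})$ and then invoking $\lambda_{\max}(F_{i,(k)} F_{i,(k)}^{H}) \leq \|F_{i,(k)}\|_{F}^{2}$.

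Using the Parseval-type identity $\|\mathcal{X}\|_{F}^{2} = \frac{1}{l}\sum_{k'} \|\widehat{\mathcal{X}}_{(k')}\|_{F}^{2}$, the probability $p_i$ can be recast as a ratio of sums over $k'$ and, for any single fixed $k$, lower bounded by a term proportional to $\|F_{i,(k)}\|_{F}^{2}$; this is precisely the ingredient that cancels with the projector inequality. Multiplying, summing over $i$, and recognising $\sum_i F_{i,(k)} F_{i,(k)}^{H} = F_{(k)} F_{(k)}^{H}$ with $F_{(k)} := \widehat{\mathcal{M}}_{(k)}^{-1/2}\widehat{\mathcal{A}}_{(k)}^{H}\widehat{\boldsymbol{\mathcal{S}}}_{(k)}$, I arrive at a PSD estimate of the form $\mathbb{E}[\widehat{\mathcal{Z}}_{(k)}] \succeq F_{(k)} F_{(k)}^{H}/\|\mathcal{M}^{-\frac{1}{2}}*\mathcal{A}^{T}*\boldsymbol{\mathcal{S}}\|_{F}^{2}$ once the Parseval factor has been tracked through. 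The positive-definiteness hypothesis of Theorem \ref{thmTESPF} forces $F_{(k)}$ to have full row rank, so $\lambda_{\min}(F_{(k)} F_{(k)}^{H}) = \lambda_{\min}^{+}(\widehat{\boldsymbol{\mathcal{S}}}_{(k)}^{H}\widehat{\mathcal{A}}_{(k)}\widehat{\mathcal{M}}_{(k)}^{-1}\widehat{\mathcal{A}}_{(k)}^{H}\widehat{\boldsymbol{\mathcal{S}}}_{(k)})$, yielding the advertised lower bound on $\lambda_{\min}(\mathbb{E}[\widehat{\mathcal{Z}}_{(k)}])$.

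The symmetric calculation, with $\widehat{\boldsymbol{\mathcal{V}}}_{(k)}$, $\widehat{\mathcal{B}}_{(k)}^{H}$, and $\widehat{\mathcal{N}}_{(k)}^{-1}$ in place of the $\mathcal{S}/\mathcal{A}/\mathcal{M}$ objects, produces the analogous bound on $\lambda_{\min}(\mathbb{E}[\widehat{\mathcal{W}}_{(k)}])$ in terms of $\lambda_{\min}^{+}(\widehat{\boldsymbol{\mathcal{V}}}_{(k)}^{H}\widehat{\mathcal{B}}_{(k)}^{H}\widehat{\mathcal{N}}_{(k)}^{-1}\widehat{\mathcal{B}}_{(k)}\widehat{\boldsymbol{\mathcal{V}}}_{(k)})$ and $\|\mathcal{N}^{-\frac{1}{2}}*\mathcal{B}*\boldsymbol{\mathcal{V}}\|_{F}^{2}$. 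Taking the product and the minimum over $k \in [l]$, then substituting into the contraction factor supplied by Theorem \ref{thmTESPF}, delivers the stated rate $\rho$. The main obstacle I expect is the cross-frequency coupling intrinsic to the sampling probabilities: $p_i$ aggregates $\|F_{i,(k')}\|_{F}^{2}$ over all $k'$, whereas the frequency-wise bound can only retain the single-index term at the chosen $k$, so careful bookkeeping of the Parseval constants is needed for the denominators to coincide cleanly with the tubal Frobenius norms appearing in the statement. A routine but necessary side check is that the complete-discrete-sampling hypothesis delivers the positive-definiteness hypothesis of Theorem \ref{thmTESPF}, ensuring that each $F_{i,(k)}^{H}F_{i,(k)}$ is genuinely invertible and that the smallest positive eigenvalue $\lambda_{\min}^{+}$ is attained.
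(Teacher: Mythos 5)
The paper does not actually prove this corollary --- it defers entirely to Corollary~4.1 of the reference \cite{tang2022sketch} --- so your argument has to stand on its own, and your overall strategy (apply Theorem~\ref{thmTESPF} slice-by-slice in the Fourier domain and lower-bound $\lambda_{\min}(\mathbb{E}[\widehat{\mathcal{Z}}_{(k)}])$ via the projector inequality $\|F\|_{F}^{2}\,P \succeq FF^{H}$ for the orthogonal projector $P$ onto $\mathbf{Range}(F)$) is indeed the natural route; that workhorse inequality and the completeness check are both fine. The genuine gap sits exactly at the step you flag as ``careful bookkeeping of the Parseval constants'', and it is not mere bookkeeping: the intermediate estimate $\mathbb{E}[\widehat{\mathcal{Z}}_{(k)}]\succeq F_{(k)}F_{(k)}^{H}/\|\mathcal{M}^{-\frac{1}{2}}*\mathcal{A}^{T}*\boldsymbol{\mathcal{S}}\|_{F}^{2}$ that you announce is false. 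Since $p_{i}=\bigl(\tfrac{1}{l}\sum_{k'}\|F_{i,(k')}\|_{F}^{2}\bigr)/\|\mathcal{M}^{-\frac{1}{2}}*\mathcal{A}^{T}*\boldsymbol{\mathcal{S}}\|_{F}^{2}$, the only inequality available at a \emph{fixed} frequency $k$ is $\tfrac{1}{l}\sum_{k'}\|F_{i,(k')}\|_{F}^{2}\geq\tfrac{1}{l}\|F_{i,(k)}\|_{F}^{2}$ (a nonnegative sum dominates one of its terms, not $l$ times one of its terms), and combining this with the projector inequality yields only $\mathbb{E}[\widehat{\mathcal{Z}}_{(k)}]\succeq F_{(k)}F_{(k)}^{H}/\bigl(l\,\|\mathcal{M}^{-\frac{1}{2}}*\mathcal{A}^{T}*\boldsymbol{\mathcal{S}}\|_{F}^{2}\bigr)$, i.e.\ a rate of the form $1-\tfrac{1}{l^{2}}\min_{k}(\cdots)$ once both sides are multiplied. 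A concrete witness that your per-frequency claim cannot hold: take $l=2$, $m=2$, $r=1$, $\mathcal{M}=\mathcal{I}$, row sampling, with horizontal-slice tubes $(2,1)$ and $(1,0)$, so that $\widehat{\mathcal{A}}_{(1)}=(3,1)^{T}$ and $\widehat{\mathcal{A}}_{(2)}=(1,1)^{T}$; every $\widehat{\mathcal{Z}}_{i,(k)}$ is the scalar $1$, hence $\lambda_{\min}(\mathbb{E}[\widehat{\mathcal{Z}}_{(1)}])=1$, while your claimed lower bound at $k=1$ is $\lambda_{\min}^{+}(\widehat{\mathcal{A}}_{(1)}\widehat{\mathcal{A}}_{(1)}^{H})/\|\mathcal{A}\|_{F}^{2}=10/6>1$.

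Consequently the argument as described only proves the corollary with an extra factor $1/l$ inside each of the two fractions defining $\rho$, not the $\rho$ as stated. In every example I tested, the \emph{stated} $\rho$ does survive after the outer $\min_{k}$ is taken --- the frequency at which the aggregated probabilities are ``unfair'' to some sketch tends also to be a frequency at which $\lambda_{\min}^{+}(F_{(k)}^{H}F_{(k)})$ is small --- but that is an intrinsically cross-frequency phenomenon that a slice-by-slice estimate cannot detect. To close the gap you must either prove the coupled inequality $\min_{k}\lambda_{\min}(\mathbb{E}[\widehat{\mathcal{Z}}_{(k)}])\lambda_{\min}(\mathbb{E}[\widehat{\mathcal{W}}_{(k)}])\geq\min_{k}(\cdots)$ directly, exploiting how the same probabilities $p_{i}$ act at all frequencies simultaneously, or settle for the weaker convergence factor that your frequency-wise argument actually delivers.
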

\emph{Proof}:
The proof is similar to that of Corollary 4.1 in \cite{tang2022sketch}, so we omit it here.

\section{Some special cases}\label{subspecialcase}
For the TESP method, it has four parameters, i.e., the distributions $\mathfrak{D}_{\mathcal{S}}$ and $\mathfrak{D}_{\mathcal{V}}$, and the T-symmetric T-positive definite tubal matrices $\mathcal{M}$ and $\mathcal{N}$. In the following, we will discuss some special cases of  the TESP method when choosing specific parameters.
\subsection{Tensor equation randomized Kaczmarz (TERK) methods}\label{secTERK}
\begin{enumerate}
	\item TERK-both method
	
	By choosing $\mathcal{ S}=\mathcal{I}_{m(:,i,:)}\in\mathbb{K}^{m}_{l}$, $\mathcal{V}=\mathcal{I}_{n(:,j,:)}\in\mathbb{K}^{n}_{l}$, $\mathcal{M}=\mathcal{I}_{r}\in\mathbb{K}^{r\times r}_{l}$ and $\mathcal{N}=\mathcal{I}_{s}\in\mathbb{K}^{s\times s}_{l}$, the update formula  (\ref{update}) can be simplified to
\begin{align*}
\mathcal{ X}^{t+1}=&\mathcal{ X}^{t}-\mathcal{ A}_{(i,:,:)}^{T}*(\mathcal{ A}_{(i,:,:)}*\mathcal{ A}_{(i,:,:)}^{T})^{\dag}*(\mathcal{ A}_{(i,:,:)}*\mathcal{ X}^{t}*\mathcal{ B}_{(:,j,:)}-\mathcal{ C}_{(i,j,:)})\\
 &*(\mathcal{ B}_{(:,j,:)}^{T}*\mathcal{ B}_{(:,j,:)})^{\dag}*\mathcal{ B}_{(:,j,:)}^{T}.
 \end{align*}
	When the index pair $(i,j)$ is randomly selected, we call the method the TERK-both method which is the tensor version of the matrix equation randomized Kaczmarz (MERK) method (for consistency, % with the content of this paper, 
	we refer to it as the MERK-both method) proposed in \cite{niu2022global,wu2022kaczmarz}. According to Corollary \ref{corspede}, we find that selecting $i$ and $j$ respectively with probabilities $p_{i}=\frac{\|\mathcal{A}_{(i,:,:)}\|_{F}^{2}}{\|\mathcal{A}\|_{F}^{2}}$ (proportional to the magnitude of $i$-th horizontal slice of $\mathcal{ A}$) and $p_{j}=\frac{\|\mathcal{B}_{(:,j,:)}\|_{F}^{2}}{\|\mathcal{B}\|_{F}^{2}}$ (proportional to the magnitude of $j$-th  lateral slice of $\mathcal{B}$)  results in a convergence with
	\begin{align*}
		\mathbb{E}\left[\left\|\mathcal{X}^{t}-\mathcal{X}^{\star}\right\|_{F}^{2}\mid\mathcal{X}^{0}\right]\leq\rho^{t}\left\|\mathcal{X}^{0}-\mathcal{X}^{\star}\right\|_{F}^{2},
	\end{align*}
 where $\rho=1-\mathop{\min}_{k\in[l]}\frac{\lambda_{\min}^{+}\left(\widehat{\mathcal{A}}_{(k)}\widehat{\mathcal{A}}^{H}_{(k)}\right)}{\|\mathcal{A}\|_{F}^{2}}\cdot\frac{\lambda_{\min}^{+}\left(\widehat{\mathcal{B}}^{H}_{(k)}\widehat{\mathcal{B}}_{(k)}\right)}{\|\mathcal{B}\|_{F}^{2}}$, and this recovers the convergence result of the MERK-both method given in Remark 2.3 in \cite{niu2022global}.
	\item TERK-left method
	
	By setting  $\mathcal{ S}=\mathcal{I}_{m(:,i,:)}\in\mathbb{K}^{m}_{l}$, $\mathcal{V}=\mathcal{I}_{n}\in\mathbb{K}^{n\times n}_{l}$, $\mathcal{M}=\mathcal{I}_{r}\in\mathbb{K}^{r\times r}_{l}$ and $\mathcal{N}=\mathcal{I}_{s}\in\mathbb{K}^{s\times s}_{l}$, the update formula  (\ref{update}) can be written as
%	$$\mathcal{ X}^{t+1}=\mathcal{ X}^{t}-\mathcal{ A}_{(i,:,:)}^{T}*(\mathcal{ A}_{(i,:,:)}*\mathcal{ A}_{(i,:,:)}^{T})^{\dag}*(\mathcal{ A}_{(i,:,:)}*\mathcal{ X}^{t}*\mathcal{ B}-\mathcal{ C}_{(i,:,:)})*(\mathcal{ B}^{T}*\mathcal{ B})^{\dag}*\mathcal{ B}^{T},$$ 
	$$\mathcal{ X}^{t+1}=\mathcal{ X}^{t}-\mathcal{ A}_{(i,:,:)}^{T}*(\mathcal{ A}_{(i,:,:)}*\mathcal{ A}_{(i,:,:)}^{T})^{\dag}*(\mathcal{ A}_{(i,:,:)}*\mathcal{ X}^{t}*\mathcal{ B}-\mathcal{ C}_{(i,:,:)})*\mathcal{ B}^{\dag},$$ 
	and we call it the TERK-left method when the index $i$ is selected at random. Applying Theorem \ref{thmTESPF} and Corollary \ref{corspede}, we find that selecting $i$ with probability $p_{i}=\frac{\|\mathcal{A}_{(i,:,:)}\|_{F}^{2}}{\|\mathcal{A}\|_{F}^{2}}$ results in a convergence with
	\begin{align*}
		\mathbb{E}\left[\left\|\mathcal{X}^{t}-\mathcal{X}^{\star}\right\|_{F}^{2}\mid\mathcal{X}^{0}\right]\leq\rho^{t}\left\|\mathcal{X}^{0}-\mathcal{X}^{\star}\right\|_{F}^{2},
	\end{align*}
where $\rho=1-\mathop{\min}_{k\in[l]}\frac{\lambda_{\min}^{+}\left(\widehat{\mathcal{A}}_{(k)}\widehat{\mathcal{A}}^{H}_{(k)}\right)}{\|\mathcal{A}\|_{F}^{2}}$.
	\item TERK-right method
	
	Let $\mathcal{ S}=\mathcal{I}_{m}\in\mathbb{K}^{m\times m}_{l}$, $\mathcal{V}=\mathcal{I}_{n(:,j,:)}\in\mathbb{K}^{n}_{l}$, $\mathcal{M}=\mathcal{I}_{r}\in\mathbb{K}^{r\times r}_{l}$ and $\mathcal{N}=\mathcal{I}_{s}\in\mathbb{K}^{s\times s}_{l}$, the update formula  (\ref{update}) can be expressed as
	$$\mathcal{ X}^{t+1}=\mathcal{ X}^{t}-\mathcal{ A}^{\dag}*(\mathcal{ A}*\mathcal{ X}^{t}*\mathcal{ B}_{(:,j,:)}-\mathcal{ C}_{(:,j,:)})*(\mathcal{ B}_{(:,j,:)}^{T}*\mathcal{ B}_{(:,j,:)})^{\dag}*\mathcal{ B}_{(:,j,:)}^{T}.$$
	Similarly, we call it the TERK-right method when the index $j$ is selected at random. From Theorem \ref{thmTESPF} and Corollary \ref{corspede}, we know that selecting $j$ with probability $p_{j}=\frac{\|\mathcal{B}_{(:,j,:)}\|_{F}^{2}}{\|\mathcal{B}\|_{F}^{2}}$ results in a convergence with
	\begin{align*}
		\mathbb{E}\left[\left\|\mathcal{X}^{t}-\mathcal{X}^{\star}\right\|_{F}^{2}\mid\mathcal{X}^{0}\right]\leq\rho^{t}\left\|\mathcal{X}^{0}-\mathcal{X}^{\star}\right\|_{F}^{2},
	\end{align*}
 where $\rho=1-\mathop{\min}_{k\in[l]}\frac{\lambda_{\min}^{+}\left(\widehat{\mathcal{B}}^{H}_{(k)}\widehat{\mathcal{B}}_{(k)}\right)}{\|\mathcal{B}\|_{F}^{2}}$.
\end{enumerate}

\subsection{Tensor equation randomized coordinate descent (TERCD) methods}
\begin{enumerate}
	\item TERCD-both method
	
By setting $\mathcal{ S}=\mathcal{ A}*\mathcal{I}_{r(:,i,:)}\in\mathbb{K}^{m}_{l}$, $\mathcal{V}=\mathcal{ B}^{T}*\mathcal{I}_{s(:,j,:)}\in\mathbb{K}^{n}_{l}$, $\mathcal{M}=\mathcal{A}^{T}*\mathcal{A}\in\mathbb{K}^{r\times r}_{l}$ and $\mathcal{N}=\mathcal{B}*\mathcal{ B}^{T}\in\mathbb{K}^{s\times s}_{l}$, the update formula  (\ref{update}) can be reduced to
\begin{align*}
\mathcal{ X}^{t+1}_{(i,j,:)}=&\mathcal{ X}^{t}_{(i,j,:)}-(\mathcal{A}_{(:,i,:)}^{T}*\mathcal{A}_{(:,i,:)})^{\dag}*\mathcal{A}_{(:,i,:)}^{T}*(\mathcal{ A}*\mathcal{ X}^{t}*\mathcal{ B}-\mathcal{ C})*\mathcal{B}_{(j,:,:)}^{T}\\
&*(\mathcal{B}_{(j,:,:)}*\mathcal{B}_{(j,:,:)}^{T})^{\dag}.
\end{align*}
When the index pair $(i,j)$ is randomly selected, we call it the TERCD-both method. Using Corollary \ref{corspede}, we find that selecting $i$ and $j$ respectively with probabilities $p_{i}=\frac{\|\mathcal{A}_{(:,i,:)}\|_{F}^{2}}{\|\mathcal{A}\|_{F}^{2}}$ (proportional to the magnitude of $i$-th lateral slice of $\mathcal{ A}$) and $p_{j}=\frac{\|\mathcal{B}_{(j,:,:)}\|_{F}^{2}}{\|\mathcal{B}\|_{F}^{2}}$ (proportional to the magnitude of $j$-th horizontal slice of $\mathcal{B}$)  results in a convergence with
\begin{align*}
	&\mathbf{E}\left[\left\|\mathcal{X}^{t}-\mathcal{X}^{\star}\right\|_{F(\mathcal{A}^{T}*\mathcal{A},\mathcal{B}*\mathcal{ B}^{T})}^{2}\mid\mathcal{X}^{0}\right]\leq\rho^{t}\left\|\mathcal{X}^{0}-\mathcal{X}^{\star}\right\|_{F(\mathcal{A}^{T}*\mathcal{A},\mathcal{B}*\mathcal{ B}^{T})}^{2},
\end{align*}
where $\rho=1-\mathop{\min}\limits_{k\in[l]}\frac{\lambda_{\min}^{+}\left(\widehat{\mathcal{A}}^{H}_{(k)}\widehat{\mathcal{A}}_{(k)}\right)}{\|\mathcal{A}\|_{F}^{2}}\cdot\frac{\lambda_{\min}^{+}\left(\widehat{\mathcal{B}}_{(k)}\widehat{\mathcal{B}}^{H}_{(k)}\right)}{\|\mathcal{B}\|_{F}^{2}}$.

\item TERCD-left method

By choosing $\mathcal{ S}=\mathcal{ A}*\mathcal{I}_{r(:,i,:)}\in\mathbb{K}^{m}_{l}$, $\mathcal{V}=\mathcal{I}_{n}\in\mathbb{K}^{n\times n}_{l}$, $\mathcal{M}=\mathcal{A}^{T}*\mathcal{A}\in\mathbb{K}^{r\times r}_{l}$ and $\mathcal{N}=\mathcal{I}_{s}\in\mathbb{K}^{s\times s}_{l}$, the update formula (\ref{update}) can be simplified to
$$\mathcal{ X}^{t+1}_{(i,:,:)}=\mathcal{ X}^{t}_{(i,:,:)}-(\mathcal{A}_{(:,i,:)}^{T}*\mathcal{A}_{(:,i,:)})^{\dag}*\mathcal{A}_{(:,i,:)}^{T}*(\mathcal{ A}*\mathcal{ X}^{t}*\mathcal{ B}-\mathcal{ C})*\mathcal{ B}^{\dag},$$ 
and we call it the TERCD-left method when the index $i$ is selected at random. According to Theorem \ref{thmTESPF} and Corollary \ref{corspede}, we see that by selecting $i$ with probability $p_{i}=\frac{\|\mathcal{A}_{(:,i,:)}\|_{F}^{2}}{\|\mathcal{A}\|_{F}^{2}}$ results in a convergence with
\begin{align*}
	\mathbf{E}\left[\left\|\mathcal{X}^{t}-\mathcal{X}^{\star}\right\|_{F(\mathcal{A}^{T}*\mathcal{A},\mathcal{I}_{s})}^{2}\mid\mathcal{X}^{0}\right]&\leq\rho^{t}\left\|\mathcal{X}^{0}-\mathcal{X}^{\star}\right\|_{F(\mathcal{A}^{T}*\mathcal{A},\mathcal{I}_{s})}^{2},
\end{align*}
where $\rho=1-\mathop{\min}_{k\in[l]}\frac{\lambda_{\min}^{+}\left(\widehat{\mathcal{A}}_{(k)}^{H}\widehat{\mathcal{A}}_{(k)}\right)}{\|\mathcal{A}\|_{F}^{2}}$.
\item TERCD-right method

By setting $\mathcal{ S}=\mathcal{I}_{m}\in\mathbb{K}^{m\times m}_{l}$, $\mathcal{V}=\mathcal{ B}^{T}*\mathcal{I}_{s(:,j,:)}\in\mathbb{K}^{n}_{l}$, $\mathcal{M}=\mathcal{I}_{r}\in\mathbb{K}^{r\times r}_{l}$ and $\mathcal{N}=\mathcal{B}*\mathcal{ B}^{T}\in\mathbb{K}^{s\times s}_{l}$, the update formula (\ref{update}) can be written as
$$\mathcal{ X}^{t+1}_{(:,j,:)}=\mathcal{ X}^{t}_{(:,j,:)}-\mathcal{ A}^{\dag}*(\mathcal{ A}*\mathcal{ X}^{t}*\mathcal{ B}-\mathcal{ C})*\mathcal{B}_{(j,:,:)}^{T}*(\mathcal{B}_{(j,:,:)}*\mathcal{B}_{(j,:,:)}^{T})^{\dag}.$$
Similarly, when the index $j$ is selected at random, we call it the TERCD-right method which is the tensor version of the projection-based randomized coordinate descent (PRCD) method (for consistency, % with the content of this paper, 
we refer to it as the MERCD-right method) proposed in \cite{du2022convergence}. By Theorem \ref{thmTESPF} and Corollary \ref{corspede}, we find that selecting $j$ with probability $p_{j}=\frac{\|\mathcal{B}_{(j,:,:)}\|_{F}^{2}}{\|\mathcal{B}\|_{F}^{2}}$ results in a convergence with
\begin{align*}
	\mathbf{E}\left[\left\|\mathcal{X}^{t}-\mathcal{X}^{\star}\right\|_{F(\mathcal{I}_{r},\mathcal{B}*\mathcal{ B}^{T})}^{2}\mid\mathcal{X}^{0}\right]&\leq\rho^{t}\left\|\mathcal{X}^{0}-\mathcal{X}^{\star}\right\|_{F(\mathcal{I}_{r},\mathcal{B}*\mathcal{ B}^{T})}^{2},
\end{align*}
where $\rho=1-\mathop{\min}_{k\in[l]}\frac{\lambda_{\min}^{+}\left(\widehat{\mathcal{B}}_{(k)}\widehat{\mathcal{B}}^{H}_{(k)}\right)}{\|\mathcal{B}\|_{F}^{2}}$, and this recovers the convergence result of the MERCD-right method given in Remark 5 in \cite{du2022convergence}.
\end{enumerate}

\subsection{Some combinations of TERK and TERCD}
\begin{enumerate}
	\item TERK-RCD method
	
	By choosing $\mathcal{ S}=\mathcal{I}_{m(:,i,:)}\in\mathbb{K}^{m}_{l}$, $\mathcal{V}=\mathcal{ B}^{T}*\mathcal{I}_{s(:,j,:)}\in\mathbb{K}^{n}_{l}$, $\mathcal{M}=\mathcal{I}_{r}\in\mathbb{K}^{r\times r}_{l}$ and $\mathcal{N}=\mathcal{B}*\mathcal{ B}^{T}\in\mathbb{K}^{s\times s}_{l}$, the update formula (\ref{update}) can be reduced to
 \begin{align*}
	\mathcal{ X}^{t+1}_{(:,j,:)}=&\mathcal{ X}^{t}_{(:,j,:)}-\mathcal{A}_{(i,:,:)}^{T}*(\mathcal{A}_{(i,:,:)}*\mathcal{A}_{(i,:,:)}^{T})^{\dag}*(\mathcal{ A}_{(i,:,:)}*\mathcal{ X}^{t}*\mathcal{ B}-\mathcal{ C}_{(i,:,:)})\\
 &*\mathcal{B}_{(j,:,:)}^{T}*(\mathcal{B}_{(j,:,:)}*\mathcal{B}_{(j,:,:)}^{T})^{\dag}.
 \end{align*}
	When the index pair $(i,j)$ is randomly selected, we call the method the TERK-RCD method. Applying Corollary \ref{corspede}, we know that selecting $i$ and $j$ respectively with probabilities $p_{i}=\frac{\|\mathcal{A}_{(i,:,:)}\|_{F}^{2}}{\|\mathcal{A}\|_{F}^{2}}$ and $p_{j}=\frac{\|\mathcal{B}_{(j,:,:)}\|_{F}^{2}}{\|\mathcal{B}\|_{F}^{2}}$ results in a convergence with
	\begin{align*}
		\mathbf{E}\left[\left\|\mathcal{X}^{t}-\mathcal{X}^{\star}\right\|_{F(\mathcal{I}_{r},\mathcal{B}*\mathcal{ B}^{T})}^{2}\mid\mathcal{X}^{0}\right]\leq\rho^{t}\left\|\mathcal{X}^{0}-\mathcal{X}^{\star}\right\|_{F(\mathcal{I}_{r},\mathcal{B}*\mathcal{ B}^{T})}^{2},
	\end{align*}
where $\rho=1-\mathop{\min}\limits_{k\in[l]}\frac{\lambda_{\min}^{+}\left(\widehat{\mathcal{A}}_{(k)}\widehat{\mathcal{A}}^{H}_{(k)}\right)}{\|\mathcal{A}\|_{F}^{2}}\cdot\frac{\lambda_{\min}^{+}\left(\widehat{\mathcal{B}}_{(k)}\widehat{\mathcal{B}}^{H}_{(k)}\right)}{\|\mathcal{B}\|_{F}^{2}}$.
	\item TERCD-RK method
	
	By setting $\mathcal{ S}=\mathcal{ A}*\mathcal{I}_{r(:,i,:)}\in\mathbb{K}^{m}_{l}$, $\mathcal{V}=\mathcal{I}_{n(:,j,:)}\in\mathbb{K}^{n}_{l}$, $\mathcal{M}=\mathcal{A}^{T}*\mathcal{A}\in\mathbb{K}^{r\times r}_{l}$ and $\mathcal{N}=\mathcal{I}_{s}\in\mathbb{K}^{s\times s}_{l}$, the update formula (\ref{update}) can be expressed as
 
\begin{align*}
\mathcal{ X}^{t+1}_{(i,:,:)}=&\mathcal{ X}^{t}_{(i,:,:)}-(\mathcal{A}_{(:,i,:)}^{T}*\mathcal{A}_{(:,i,:)})^{\dag}*\mathcal{A}_{(:,i,:)}^{T}*(\mathcal{ A}*\mathcal{ X}^{t}*\mathcal{ B}_{(:,j,:)}-\mathcal{ C}_{(:,j,:)})\\
& *(\mathcal{B}_{(:,j,:)}^{T}*\mathcal{B}_{(:,j,:)})^{\dag}*\mathcal{B}_{(:,j,:)}^{T}.
\end{align*}
	Similarly, we call it the TERCD-RK method when the index pair $(i,j)$ is randomly selected. From Corollary \ref{corspede}, we see that selecting $i$ and $j$ respectively with probabilities $p_{i}=\frac{\|\mathcal{A}_{(:,i,:)}\|_{F}^{2}}{\|\mathcal{A}\|_{F}^{2}}$ and $p_{j}=\frac{\|\mathcal{B}_{(:,j,:)}\|_{F}^{2}}{\|\mathcal{B}\|_{F}^{2}}$ results in a convergence with
	\begin{align*}
		\mathbf{E}\left[\left\|\mathcal{X}^{t}-\mathcal{X}^{\star}\right\|_{F(\mathcal{A}^{T}*\mathcal{A},\mathcal{I}_{s})}^{2}\mid\mathcal{X}^{0}\right]\leq\rho^{t}\left\|\mathcal{X}^{0}-\mathcal{X}^{\star}\right\|_{F(\mathcal{A}^{T}*\mathcal{A},\mathcal{I}_{s})}^{2},
	\end{align*}
where $\rho=1-\mathop{\min}\limits_{k\in[l]}\frac{\lambda_{\min}^{+}\left(\widehat{\mathcal{A}}_{(k)}^{H}\widehat{\mathcal{A}}_{(k)}\right)}{\|\mathcal{A}\|_{F}^{2}}\cdot\frac{\lambda_{\min}^{+}\left(\widehat{\mathcal{B}}^{H}_{(k)}\widehat{\mathcal{B}}_{(k)}\right)}{\|\mathcal{B}\|_{F}^{2}}$.
\end{enumerate}

In a word, different parameters in the TESP method lead to different methods, which are summarized in Table \ref{TERKTERCD} for clarity. Further, these methods can also be combined with the adaptive sampling strategies proposed in Section \ref{secadaptiveTESP} to obtain corresponding adaptive methods.

\begin{table}[h]
\begin{center}
%\begin{minipage}{400pt}
%\fontsize{9}{9}\selectfont
\caption{Summary of special cases of the TESP method.}\label{TERKTERCD}
\begin{tabular}{|c|c|c|c|}
\hline
TESP	 & \begin{tabular}[c]{@{}c@{}}not sample\\  $\mathcal{ A}$ \end{tabular}   & \begin{tabular}[c]{@{}c@{}}sample horizontal\\  slice of $\mathcal{ A}$\end{tabular} & \begin{tabular}[c]{@{}c@{}}sample lateral \\  slice of $\mathcal{ A}$\end{tabular} \\ \hline
\begin{tabular}[c]{@{}c@{}}not sample\\  $\mathcal{ B}$ \end{tabular}	& -- & TERK-left & TERCD-left \\ \hline
\begin{tabular}[c]{@{}c@{}}sample horizontal\\  slice of $\mathcal{B}$\end{tabular} 	&TERCD-right  & TERK-RCD & TERCD-both\\ \hline
\begin{tabular}[c]{@{}c@{}}sample lateral \\  slice of $\mathcal{B}$\end{tabular}	&TERK-right  & TERK-both &TERCD-RK  \\ \hline
\end{tabular}
	%}
%\end{minipage}
\end{center}
\end{table}

\section{Numerical experiments}\label{subexperi} 
\subsection{Implementation tricks and computation complexity}
%In this subsection, we analyze the computational costs of each iteration of the nonadaptive and adaptive TESP methods including the NTESP, ATESP-MD, ATESP-PR and ATESP-CS methods. 
Similar to \cite{gower2019adaptive,tang2022sketch}, we implement the nonadaptive and adaptive TESP methods including the NTESP, ATESP-MD, ATESP-PR and ATESP-CS methods in their corresponding fast versions, % in our numerical experiments, 
for example, the fast version of the ATESP-PR method is given in Algorithm \ref{FATESPF} in the appendix. 

For computation complexity, we analyze the computational costs of each iteration of the above methods. The specific analysis %of the computational costs
is as follows:
\begin{enumerate}
	\item When the sketched residuals $\{\widehat{\mathcal{R}}^{t}_{i,j}: i=1,\cdots,q_{\mathcal{ S}}, j=1,\cdots,q_{\mathcal{ V}}\}$ are precomputed, computing the sketched losses $\{f_{i,j}(\mathcal{X}^{t}): i=1,\cdots,q_{\mathcal{ S}}, j=1,\cdots,q_{\mathcal{ V}}\}$ requires 
	$$2\tau\zeta lq_{\mathcal{S}}q_{\mathcal{V}}~(l>1)\quad \text{or} \quad(2\tau\zeta-1)q_{\mathcal{S}}q_{\mathcal{V}}~(l=1)$$
	 flops, i.e., $\mathcal{O}(\tau\zeta lq_{\mathcal{S}}q_{\mathcal{V}})$ flops.
	 
	  \item  For the NTESP, ATESP-PR and ATESP-CS methods, computing the sampling probabilities $\mathbf{p}_{\mathcal{ S},\mathcal{ V}}^{t}$ from the sketched losses $\{f_{i,j}(\mathcal{X}^{t}): i=1,\cdots,q_{\mathcal{ S}}, j=1,\cdots,q_{\mathcal{ V}}\}$ requires $\mathcal{O}(1)$, $2q_{\mathcal{ S}}q_{\mathcal{V}}$ and  $6q_{\mathcal{ S}}q_{\mathcal{V}}$ flops, respectively. For the ATESP-MD method, it requires $q_{\mathcal{ S}}q_{\mathcal{V}}(\tau\zeta>1)$ or  $\mathcal{O}(log(q_{\mathcal{ S}}q_{\mathcal{V}}))(\tau\zeta=1)$ flops.

	\item When $$\{\widehat{\mathcal{M}}^{-1}_{(k)}\widehat{\mathcal{A}}_{(k)}^{H}(\widehat{\mathcal{S}}_{i^{t}})_{(k)}(\widehat{\mathcal{C}}_{i^{t}})_{(k)}: i=1,\cdots,q_{\mathcal{ S}}, k=1,\cdots,l\}$$
	and
	$$\{(\widehat{\mathcal{D}}_{j})^{H}_{(k)}(\widehat{\mathcal{V}}_{j})_{(k)}^{H}\widehat{\mathcal{B}}^{H}_{(k)}\widehat{\mathcal{N}}^{-1}_{(k)}: j=1,\cdots,q_{\mathcal{ V}}, k=1,\cdots,l\}$$
	are precomputed, updating $\widehat{\mathcal{X}}^{t}$ to $\widehat{\mathcal{X}}^{t+1}$ requires 
	$$\min\{(2\tau-1+2s)r\zeta,(2\zeta-1+2r)\tau s\}\lceil\frac{l+1}{2}\rceil+rs(l-\lceil\frac{l+1}{2}\rceil)$$ 
	flops , i.e., $\mathcal{O}(\min\{\max\{\tau,s\}\zeta r,\max\{\zeta,r\}\tau s\}\lceil\frac{l+1}{2}\rceil)$ flops.

	\item  When $$\{(\widehat{\mathcal{C}}_{i})_{(k)}^{H}(\widehat{\mathcal{S}_{i}})_{(k)}^{H}\widehat{\mathcal{A}}_{(k)}\widehat{\mathcal{M}}^{-1}_{(k)}\widehat{\mathcal{A}}_{(k)}^{H}(\widehat{\mathcal{S}}_{v})_{(k)}(\widehat{\mathcal{C}}_{v})_{(k)} : i,v=1,\cdots,q_{\mathcal{ S}}, k=1,\cdots,l\}$$ 
	and
	$$\{(\widehat{\mathcal{D}}_{j})_{(k)}^{H}(\widehat{\mathcal{V}_{j}})_{(k)}^{H}\widehat{\mathcal{B}}_{(k)}^{H}\widehat{\mathcal{N}}^{-1}_{(k)}\widehat{\mathcal{B}}_{(k)}(\widehat{\mathcal{V}}_{w})_{(k)}(\widehat{\mathcal{D}}_{w})_{(k)}: j,w=1,\cdots,q_{\mathcal{ V}}, k=1,\cdots,l\}$$ 
	are precomputed, updating $\{\widehat{\mathcal{R}}^{t}_{i,j}:i=1,\cdots,q_{\mathcal{ S}}, j=1,\cdots,q_{\mathcal{ V}}\}$ to $\{\widehat{\mathcal{R}}^{t+1}_{i,j}: i=1,\cdots,q_{\mathcal{ S}}, j=1,\cdots,q_{\mathcal{ V}}\}$ requires 
	$$(2\tau^{2}\zeta+2\tau\zeta^{2}-\tau\zeta)q_{\mathcal{ S}}q_{\mathcal{ V}}\lceil\frac{l+1}{2}\rceil+\tau\zeta q_{\mathcal{ S}}q_{\mathcal{ V}}(l-\lceil\frac{l+1}{2}\rceil)$$
	 flops, i.e., $\mathcal{O}(\tau\zeta\max\{\tau,\zeta\}q_{\mathcal{ S}}q_{\mathcal{ V}}\lceil\frac{l+1}{2}\rceil)$ flops. Note that for the NTESP method, one only needs to compute the single sketched residual $\widehat{\mathcal{R}}_{i^{t},j^{t}}^{t}$, where $$(\widehat{\mathcal{R}}_{i^{t},i^{t}}^{t})_{(k)}=(\widehat{\mathcal{C}}_{i^{t}})_{(k)}^{H}\left((\widehat{\mathcal{S}}_{i^{t}})_{(k)}^{H}\left(\widehat{\mathcal{A}}_{(k)}\widehat{\mathcal{X}}_{(k)}^{t}\widehat{\mathcal{B}}_{(k)}-\widehat{\mathcal{C}}_{(k)}\right)(\widehat{\mathcal{V}_{j}})_{(k)}\right)(\widehat{\mathcal{D}}_{j})_{(k)},$$
  for $k=1,\cdots,l$.
	 When
	 $$\{(\widehat{\mathcal{C}}_{i})_{(k)}^{H}(\widehat{\mathcal{S}_{i}})_{(k)}^{H}\widehat{\mathcal{A}}_{(k)}: i=1,\cdots,q_{\mathcal{ S}}, k=1,\cdots,l\},$$ 
	 $$\{\widehat{\mathcal{B}}_{(k)}(\widehat{\mathcal{V}_{j}})_{(k)}(\widehat{\mathcal{D}}_{j})_{(k)}: j=1,\cdots,q_{\mathcal{ V}}, k=1,\cdots,l\},$$ 
	 and
	 $$\{(\widehat{\mathcal{C}}_{i})_{(k)}^{H}(\widehat{\mathcal{S}_{i}})_{(k)}^{H}\widehat{\mathcal{C}}_{(k)}(\widehat{\mathcal{V}_{j}})_{(k)}(\widehat{\mathcal{D}}_{j})_{(k)}: i=1,\cdots,q_{\mathcal{ S}},j=1,\cdots,q_{\mathcal{ V}}, k=1,\cdots,l\}$$ 
	 are precomputed,  computing sketched residual $\widehat{\mathcal{R}}_{i^{t},j^{t}}^{t}$ directly from the iterate $\mathcal{X}^{t}$ costs 
	 $$\min\{(2r-1+2\zeta)\tau s,(2s-1+2\tau)r\zeta\}\lceil\frac{l+1}{2}\rceil+\tau\zeta(l-\lceil\frac{l+1}{2}\rceil)$$
	 flops, i.e., $\mathcal{O}(\min\{\max\{\tau,s\}\zeta r,\max\{\zeta,r\}\tau s\}\lceil\frac{l+1}{2}\rceil)$ flops. Hence, when 
	 $$\tau\zeta\max\{\tau,\zeta\}q_{\mathcal{ S}}q_{\mathcal{ V}}>\min\{\max\{\tau,s\}\zeta r,\max\{\zeta,r\}\tau s\},$$ 
	 it is cheaper for the NTESP method to compute the sketched residual $\widehat{\mathcal{R}}_{i^{t},j^{t}}^{t}$ directly than using update  formula.
\end{enumerate}

Putting all the costs together, the overall leading order complexity per iteration of the NTESP method and the adaptive cases (ATESP-MD, ATESP-PR, ATESP-CS) are $$\mathcal{O}(\min\{\max\{\tau,s\}\zeta r,\max\{\zeta,r\}\tau s\}\lceil\frac{l+1}{2}\rceil)$$ and $$\mathcal{O}(\max\{\min\{\max\{\tau,s\}\zeta r,\max\{\zeta,r\}\tau s\}, \tau\zeta\max\{\tau,\zeta\}q_{\mathcal{ S}}q_{\mathcal{ V}}\}\lceil\frac{l+1}{2}\rceil+\tau\zeta lq_{\mathcal{S}}q_{\mathcal{V}}),$$ respectively. 

\subsection{Experimental results}
%We divide the tests into two parts, including the performance comparison of the TESP method with existing methods and the performance comparison of the TESP method with its adaptive variants. 
%In this subsection, 
We will use three numerical examples to illustrate the empirical performance of the proposed TESP method and its adaptive variants for solving the tensor equation (\ref{tensorequation}). It should be noted that in the following specific experiments, we only consider the special cases of the TESP-type methods, namely the TERK-type methods, which have been discussed in Section \ref{secTERK}. All experiments are conducted on a computer with an Intel Xeon W-2255 3.7 GHz CPU and 256 GB memory, and all the algorithms have been implemented in the MATLAB R2020b environment and Tensor-Tensor Product Toolbox \cite{ttproduct}. All computations start from the initial point $\mathcal{ X}^{0}=O$, where $O$ is the zero tubal matrix, and terminate once the relative residual norm (RRN) at $\mathcal{ X}^{t}$, deﬁned by
$$\text{RRN}=\frac{\|\mathcal{ C}-\mathcal{ A}*\mathcal{ X}^{t}*\mathcal{ B}\|_{F}}{\|\mathcal{ C}-\mathcal{ A}*\mathcal{ X}^{0}*\mathcal{ B}\|_{F}}$$
is less than $10^{-4}$, or the number of iterations (IT) exceeds $10^{8}$, or the computing time in seconds (CPU) exceeds $5000$ s. Note that we do not consider the precomputational cost, but only
the costs spent at each iteration. All results are averaged over $10$ trails.
%\subsubsection{Performance comparison of the TESP method with existing methods}
\begin{example}\label{EX1}\rm
Applying the algebraic properties of t-product, we can transform the tensor equation (\ref{tensorequation}) into a tensor linear system 
\begin{align}\label{linearsystem}
	(\mathcal{ B}^{ST}\otimes_{t}\mathcal{ A})*\text{vec}_{t}(\mathcal{ X})=\text{vec}_{t}(\mathcal{ C}),
\end{align}
or a matrix equation
\begin{align}\label{matrixequation}
	\text{bcirc}(\mathcal{ A})\text{bcirc}(\mathcal{ X})\text{bcirc}(\mathcal{ B})=	\text{bcirc}(\mathcal{ C}).
\end{align}
The former can be solved by the TRK method  \cite{ma2021randomized}, which is a specal case of the TSP method  \cite{tang2022sketch}, while the latter can be solved by the MERK methods, which are specal cases of the MESP method. In this example, we compare the empirical performance of the TERK methods (including the TERK-both, TERK-left and TERK-right methods) for the tensor equation (\ref{tensorequation}), the TRK \cite{ma2021randomized} method for the tensor linear system (\ref{linearsystem}), and the MERK methods (including the MERK-both  \cite{niu2022global,wu2022kaczmarz}, MERK-left and MERK-right methods) for the  matrix equation (\ref{matrixequation}). We generate the tubal matrices $\mathcal{ A}\in\mathbb{K}^{m\times r}_{l}$, $\mathcal{B}\in\mathbb{K}^{s\times n}_{l}$ and $\mathcal{X}\in\mathbb{K}^{r\times s}_{l}$ by using the MATLAB function \texttt{randn}, and construct a tensor equation by setting $\mathcal{C}=\mathcal{ A}*\mathcal{X}*\mathcal{ B}$. Table \ref{ITCPU_RK} list the average IT and CPU for various RK-type methods, from which we can see that, in all settings, %the TERK-both method takes substantially less time even though it has a comparable number of iterations as the TRK method. 
the three TERK methods  outperform their corresponding matrix counterparts in terms of CPU time and the number of iterations. Except for MERK-both, these matrix methods in turn perform better than the TRK method. For the TERK-both and TRK methods, the former takes substantially less time even though it has a comparable number of iterations as the latter.  In addition, it is important to note that although the four methods TERK-left, TERK-right, MERK-left and MERK-right exhibit excellent performance in this experiment, they may be not suitable for very large-scale equations since they require calculating the pseudoinverse.

\begin{table}[tp]
	\centering
	\fontsize{8}{8} \selectfont
	\caption{Comparison of the average IT and CPU for the RK-type methods}\label{ITCPU_RK}
	\setlength{\tabcolsep}{0.5mm}{
		\begin{tabular}{ccccccccccccc}
				\hline
			$m $    & $r $  & $s $  & $n $ & $l $ &  & \begin{tabular}[c]{@{}c@{}}TERK\\  -both \end{tabular} & \begin{tabular}[c]{@{}c@{}}TERK\\  -left \end{tabular}&\begin{tabular}[c]{@{}c@{}}TERK\\  -right \end{tabular}&\begin{tabular}[c]{@{}c@{}}TRK\\  \cite{ma2021randomized} \end{tabular}& \begin{tabular}[c]{@{}c@{}}MERK-both\\ \cite{niu2022global,wu2022kaczmarz}\end{tabular} & \begin{tabular}[c]{@{}c@{}}MERK\\ -left\end{tabular}& \begin{tabular}[c]{@{}c@{}}MERK\\ -right\end{tabular}  \\
			\hline
			\multirow{2}{*}{70}   &  \multirow{2}{*}{50}  &  \multirow{2}{*}{50}  &  \multirow{2}{*}{70} &  \multirow{2}{*}{10}& IT &  279906.9 &  2317.4  & 2244.2  & 280269  & -- & 23539.1 &  25933.3  \\
			&  &   &  & & CPU &  152.0171 & 1.4959 & 1.4842   & 283.1842 &5000 & 61.7545 & 71.6118     \\
			\hline
			\multirow{2}{*}{100}   &  \multirow{2}{*}{50}  &  \multirow{2}{*}{50}  &  \multirow{2}{*}{100} &  \multirow{2}{*}{10}& IT & 123274.5 &  1104.9 &  1094    & 123396 & -- & 10977  & 12015.9  \\
			&  &   &  & &CPU & 69.1066 &  0.8528 & 0.8571  & 158.5778 & 5000 & 36.1942  & 41.8186    \\
			\hline
			\multirow{2}{*}{50}   &  \multirow{2}{*}{70}  &  \multirow{2}{*}{70}  &  \multirow{2}{*}{50} &  \multirow{2}{*}{10}& IT &  291300.6 &  2622.5  & 2252.2  & 291822.5  & -- & 25766.4 &  24454.3   \\
			&  &   &  & & CPU &  221.4358 & 2.2087  & 1.6733  &  360.9614 & 5000& 108.1670 & 49.9247     \\
			\hline
			\multirow{2}{*}{50}   &  \multirow{2}{*}{100}  &  \multirow{2}{*}{100}  &  \multirow{2}{*}{50} &  \multirow{2}{*}{10}& IT & 120277.8  &  1144.1  &  1039   & 120667.7 & -- &10682.4  & 10673.7  \\
			&  &   &  & & CPU & 138.0595 & 1.4167  &  1.1463   &  268.4936  & 5000 & 67.8701 &  32.3411    \\
			\hline
		\end{tabular}
	}
\end{table}
\end{example}

%\subsubsection{Performance comparison of TESP method and its adaptive variants}

\begin{example}\label{EX2}\rm
In this example, we compare the empirical  performance of the nonadaptive and adaptive TERK methods using %. We generate 
the tensor equations generated as in Example \ref{EX1}. Here, we only take into account the time spent by a single subsystem in the Fourier domain due to the fact that the TERK methods can be implemented in parallel. The numerical results of the TERK-both, TERK-left and TERK-right methods are provided in Tables \ref{ITCPU_NARK_both}, \ref{ITCPU_NARK_left} and \ref{ITCPU_NARK_right}, respectively. From the three tables, we can find that the nonadaptive TERK methods require more iteration steps  than their corresponding adaptive methods,  indicating that the proposed adaptive sampling strategies can indeed speed up the convergence of the nonadaptive methods. For CPU time, the nonadaptive TERK methods also take up more than the adaptive TERK methods, with the exception that the TERK-both-PR and TERK-left-PR methods consume more CPU time than the corresponding nonadaptive ones in the setting of $m>r$ and $n>s$. The primary reason for this would be that, in this case, compared to the nonadaptive ones, the number of iterations of the TERK-both-PR and TERK-left-PR  methods does not reduce considerably, but the computation cost per iteration increases, resulting in no reduction in the overall time, which is in line with the theory.

\begin{table}[tp]
	\centering
	\fontsize{8}{8} \selectfont
	\caption{Comparison of the average IT and CPU for the nonadaptive and adaptive TERK-both methods}\label{ITCPU_NARK_both}
	\setlength{\tabcolsep}{0.7mm}{
		\begin{tabular}{cccccccccc}
			\hline
			$m $    & $r $  & $s $  & $n $ & $l $ &   & \begin{tabular}[c]{@{}c@{}}NTERK\\  -both \end{tabular}& \begin{tabular}[c]{@{}c@{}}ATERK\\  -both-MD \end{tabular}&  \begin{tabular}[c]{@{}c@{}}ATERK\\  -both-PR \end{tabular} &\begin{tabular}[c]{@{}c@{}}ATERK\\  -both-CS \end{tabular} \\
			\hline
			\multirow{2}{*}{150}   &  \multirow{2}{*}{50}  &  \multirow{2}{*}{50}  &  \multirow{2}{*}{150} &  \multirow{2}{*}{10}& IT &  65289.7  &  27597  & 51829.3    &  28683.3   \\
			&  &   &  & & CPU & 31.0849 & 12.8789  & 42.5905  &   3.5250   \\
			\hline
			\multirow{2}{*}{300}   &  \multirow{2}{*}{50}  &  \multirow{2}{*}{50}  &  \multirow{2}{*}{300} &  \multirow{2}{*}{10}& IT &  35612.7   &   12405  &28886.2  & 13370.4  \\
			&  &   &  & & CPU &  21.8306  & 12.8423  & 54.7469  &   2.0754     \\
			\hline
			\multirow{2}{*}{50}   &  \multirow{2}{*}{150}  &  \multirow{2}{*}{150}  &  \multirow{2}{*}{50} &  \multirow{2}{*}{10}& IT & 64297    & 29161 &  37168   &   29426.4  \\
			&  &   &  & & CPU & 94.6512 & 35.1994  & 51.0602  & 28.2686    \\
			\hline
			\multirow{2}{*}{50}   &  \multirow{2}{*}{300}  &  \multirow{2}{*}{300}  &  \multirow{2}{*}{50} &  \multirow{2}{*}{10}& IT &  35096  & 13968   &  17082.6  & 13968 \\
			&  &   &  & & CPU &159.1867  & 60.0866 & 78.6140  & 51.2417      \\
			\hline
		\end{tabular}
	}
\end{table}

\begin{table}[tp]
	\centering
	\fontsize{8}{8} \selectfont
	\caption{Comparison of the average IT and CPU for the nonadaptive and adaptive TERK-left methods}\label{ITCPU_NARK_left}
	\setlength{\tabcolsep}{0.7mm}{
		\begin{tabular}{cccccccccc}
		\hline
			$m $    & $r $  & $s $  & $n $ & $l $ &   & \begin{tabular}[c]{@{}c@{}}NTERK\\  -left \end{tabular}& \begin{tabular}[c]{@{}c@{}}ATERK\\  -left-MD \end{tabular}&  \begin{tabular}[c]{@{}c@{}}ATERK\\  -left-PR \end{tabular} &\begin{tabular}[c]{@{}c@{}}ATERK\\  -left-CS \end{tabular} \\
			\hline
			\multirow{2}{*}{150}   &  \multirow{2}{*}{50}  &  \multirow{2}{*}{50}  &  \multirow{2}{*}{150} &  \multirow{2}{*}{10}& IT &  742.1 &  444  & 578.4    &  464.2   \\
			&  &   &  & & CPU & 0.6067 &  0.3298  & 0.5165  &  0.4239  \\
			\hline
			\multirow{2}{*}{300}   &  \multirow{2}{*}{50}  &  \multirow{2}{*}{50}  &  \multirow{2}{*}{300} &  \multirow{2}{*}{10}& IT &  547.2   &   380  & 484.2  & 393.2   \\
			&  &   &  & & CPU &0.5642  & 0.3602  & 0.5784  &   0.4776     \\
			\hline
			\multirow{2}{*}{50}   &  \multirow{2}{*}{150}  &  \multirow{2}{*}{150}  &  \multirow{2}{*}{50} &  \multirow{2}{*}{10}& IT & 718.3    & 300 & 370.7   &   309.5  \\
			&  &   &  & & CPU & 1.5813 & 0.5680 & 0.7775  &  0.6677    \\
			\hline
			\multirow{2}{*}{50}   &  \multirow{2}{*}{300}  &  \multirow{2}{*}{300}  &  \multirow{2}{*}{50} &  \multirow{2}{*}{10}& IT &  510.9   & 189   &  224.4  &  191.5 \\
			&  &   &  & & CPU & 2.5093  & 0.8215  & 0.9857 &  0.8496     \\
			\hline
		\end{tabular}
	}
\end{table}

\begin{table}[tp]
	\centering
	\fontsize{8}{8} \selectfont
	\caption{Comparison of the average IT and CPU for the nonadaptive and adaptive TERK-right methods}\label{ITCPU_NARK_right}
	\setlength{\tabcolsep}{0.7mm}{
		\begin{tabular}{cccccccccc}
			\hline
			$m $    & $r $  & $s $  & $n $ & $l $ &   & \begin{tabular}[c]{@{}c@{}}NTERK\\  -right \end{tabular}& \begin{tabular}[c]{@{}c@{}}ATERK\\  -right-MD \end{tabular}&  \begin{tabular}[c]{@{}c@{}}ATERK\\  -right-PR \end{tabular} &\begin{tabular}[c]{@{}c@{}}ATERK\\  -right-CS \end{tabular} \\
			\hline
			\multirow{2}{*}{150}   &  \multirow{2}{*}{50}  &  \multirow{2}{*}{50}  &  \multirow{2}{*}{150} &  \multirow{2}{*}{10}& IT &  723.4  & 450  & 586.7    &  465.1   \\
			&  &   &  & & CPU & 0.5127 &  0.1606 & 0.3015   &   0.2408   \\
			\hline
			\multirow{2}{*}{300}   &  \multirow{2}{*}{50}  &  \multirow{2}{*}{50}  &  \multirow{2}{*}{300} &  \multirow{2}{*}{10}& IT &  551.5   &   380  & 485.8 & 392.9   \\
			&  &   &  & & CPU &0.4928   & 0.2059  & 0.3576  &   0.2985    \\
			\hline
			\multirow{2}{*}{50}   &  \multirow{2}{*}{150}  &  \multirow{2}{*}{150}  &  \multirow{2}{*}{50} &  \multirow{2}{*}{10}& IT & 704.6    & 303 &  372.1   &   309.3  \\
			&  &   &  & & CPU & 0.9787  & 0.3426  & 0.4697   &  0.3962    \\
			\hline
			\multirow{2}{*}{50}   &  \multirow{2}{*}{300}  &  \multirow{2}{*}{300}  &  \multirow{2}{*}{50} &  \multirow{2}{*}{10}& IT &  542.1  &  186 &  218.8  &  188.4 \\
			&  &   &  & & CPU & 1.9569 & 0.6052  & 0.7548   &  0.6398     \\
			\hline
		\end{tabular}
	}
\end{table}
\end{example}
	\begin{example}\label{EX3}\rm
	In this example, we illustrate the eﬀectiveness of our proposed methods through a color image restoration problem \cite{el2021tensor}.
	Let $\mathcal{ X}^{\star}\in\mathbb{K}^{r\times s}_{l}$ and $\mathcal{C}\in\mathbb{K}^{r\times s}_{l}$ be the original blur-free and observed blurred color image, respectively. We consider the following full blurring model:
	\begin{align}\label{imagerestor}
		(H\otimes B\otimes A)\begin{bmatrix}
			\text{vec}(\mathcal{ X}_{(1)}^{\star})  \\
			\text{vec}(\mathcal{ X}_{(2)}^{\star}) \\
			\text{vec}(\mathcal{ X}_{(3)}^{\star}) \\
		\end{bmatrix}=\begin{bmatrix}
			\text{vec}(\mathcal{ C}_{(1)})  \\
			\text{vec}(\mathcal{ C}_{(2)}) \\
			\text{vec}(\mathcal{ C}_{(3)}) \\
		\end{bmatrix},
	\end{align}
	where $H\in\mathbb{R}^{3\times 3}$ is a matrix modeling the cross-channel blurring, and each row sums to one; $B\in\mathbb{R}^{n\times s}$ and $A\in\mathbb{R}^{m\times r}$
	are matrices modeling the horizontal within-blurring and the vertical within-blurring, respectively; for more details, see \cite{hansen2006deblurring}. Here, we consider a special case where $H$ is a circular matrix, i.e., 
	$$H=\begin{bmatrix}
		h_{1}& h_{3}& h_{2} \\
		h_{2} &  h_{1}& h_{3}\\
		h_{3} &h_{2}&   h_{1}\\
	\end{bmatrix},$$ then (\ref{imagerestor}) can be rewritten as
	\begin{align*}
		\begin{bmatrix}
			h_{1}(B\otimes A) & h_{3}(B\otimes A) & h_{2}(B\otimes A) \\
			h_{2}(B\otimes A) &  h_{1}(B\otimes A)& h_{3}(B\otimes A) \\
			h_{3}(B\otimes A) &h_{2}(B\otimes A)& h_{1}(B\otimes A)\\
		\end{bmatrix}\text{unfold}(\text{vet}_{t}(\mathcal{ X}^{\star}))=\text{unfold}(\text{vet}_{t}(\mathcal{ C})).
	\end{align*}
 Let $\mathcal{ A}\in\mathbb{K}_{l}^{m\times r}$ satisfy $\mathcal{ A}_{(k)}=h_{k}A$ for $k=1,2,3$ and $\mathcal{B}\in\mathbb{K}_{l}^{s\times n}$ satisfy $\mathcal{ B}_{(1)}=B^{T}$ and $\mathcal{ B}_{(k)}$ for $k=2,3$ are all zero matrices. Thus, we have 

\begin{align*}
&\begin{bmatrix}
h_{1}(B\otimes A) & h_{3}(B\otimes A) & h_{2}(B\otimes A) \\
h_{2}(B\otimes A) &  h_{1}(B\otimes A)  & h_{3}(B\otimes A) \\
h_{3}(B\otimes A) &h_{2}(B\otimes A) & h_{1}(B\otimes A)\\
\end{bmatrix}
=	\begin{bmatrix}
B\otimes h_{1}A & B\otimes h_{3}A & B\otimes h_{2}A \\
B\otimes h_{2}A & B\otimes  h_{1}A  & B\otimes h_{3}A \\
B\otimes h_{3}A & B\otimes h_{2}A  & B\otimes h_{1}A\\
\end{bmatrix}\\
%=&\begin{bmatrix}
%B\otimes  \mathcal{ A}_{(1)}& B\otimes \mathcal{ A}_{(3)} &  B\otimes\mathcal{ A}_{(2)} \\
%B\otimes \mathcal{ A}_{(2)}&  B\otimes \mathcal{ A}_{(1)}  & B\otimes \mathcal{ A}_{(3)} \\
%B\otimes \mathcal{ A}_{(3)} &  B\otimes\mathcal{ A}_{(2)}  &   B\otimes\mathcal{ A}_{(1)}\\
%\end{bmatrix}\\
=&\begin{bmatrix}
(\mathcal{ B}^{ST}\otimes_{t}\mathcal{ A})_{(1)}& 	(\mathcal{ B}^{ST}\otimes_{t}\mathcal{ A})_{(3)} &  	(\mathcal{ B}^{ST}\otimes_{t}\mathcal{ A})_{(2)} \\
(\mathcal{ B}^{ST}\otimes_{t}\mathcal{ A})_{(2)}&  	(\mathcal{ B}^{ST}\otimes_{t}\mathcal{ A})_{(1)}  & 	(\mathcal{ B}^{ST}\otimes_{t}\mathcal{ A})_{(3)} \\
(\mathcal{ B}^{ST}\otimes_{t}\mathcal{ A})_{(3)} &  	(\mathcal{ B}^{ST}\otimes_{t}\mathcal{ A})_{(2)}  &   	(\mathcal{ B}^{ST}\otimes_{t}\mathcal{ A})_{(1)}\\
\end{bmatrix}=\text{bcirc}(\mathcal{ B}^{ST}\otimes_{t}\mathcal{ A}),
	\end{align*}
	and hence 
	\begin{align*}
		\text{bcirc}(\mathcal{ B}^{ST}\otimes_{t}\mathcal{ A})\text{unfold}(\text{vet}_{t}(\mathcal{ X}^{\star}))=\text{unfold}(\text{vet}_{t}(\mathcal{ C})).
	\end{align*}
	Therefore, the blurring model (\ref{imagerestor}) can be represented by the following tensor equation:
	\begin{align*}
		\mathcal{ A}*\mathcal{ X}^{\star}*\mathcal{ B}=\mathcal{ C}.
	\end{align*}

Specifically, we consider a $192\times 128$ color image from the Corel5K dataset, and $A$ and $B$ are Gaussian Toeplitz matrices with dimensions $198\times 198$ and $168\times168$, respectively,  whose elements are defined as
 \begin{equation*}
A_{(i,j)}, B_{(i,j)}=	\left\{
	\begin{array}{lcl}
	\frac{1}{\sigma \sqrt{2\pi}}\text{exp}\left(-\frac{(i-j)^{2}}{2\sigma^{2}}\right), &  \vert i-j\vert\leq r, \\
		0, &  \text{otherwise}.
	\end{array} \right.
\end{equation*}
And we set $\sigma=7$, $r=3$ and
$$H=\begin{bmatrix}
	0.3& 0.4& 0.3 \\
	0.3 &  0.3& 0.4\\
	0.4 &0.3&   0.3\\
\end{bmatrix}.$$

We first compare the performance of the TERK-both, TERK-right and TERK-left methods with the Tensor T-Global GMRES method proposed in  \cite{el2021tensor}, and here we also consider the precomputation cost of the three TERK methods. It can be seen from Table \ref{re} that, our proposed methods outperform the GMERS method in terms of both CPU time and PSNR value, except that the TERK-both method takes the most time. It is worth mentioning that although the TERK-both method is the most time-consuming, it is more suitable for large-scale problems because it does not require to calculate large-dimensional matrix multiplications and pseudoinverses. In addition, the original clean image, its corresponding blurry observation and the images recovered from the Tensor T-Global GMRES, TERK-both, TERK-left and TERK-right methods are shown in Figure \ref{fig5}.
\begin{table}[tp]
	\centering
	\fontsize{8}{8} \selectfont
	\caption{Comparison of the average IT, CPU and PSNR for the tensor T-Global GMRES, TERK-both, TERK-left and TERK-right methods on real world color image data.}\label{re}
	
		\begin{tabular}{ccccc}
			\hline
			 & T-Global GMRES  \cite{el2021tensor} & TERK-both& TERK-left &TERK-right \\
			\hline
			IT &  599  & 22337  & 1401.2    &  997.7   \\
			 CPU & 30.6517 & 44.2455 & 4.6742   &   4.9833   \\
			  PSNR &  17.4053 & 17.8779 & 23.0036   &   22.3444   \\
			\hline
		\end{tabular}
\end{table}

\begin{figure}[htbp]
	\centering
	\includegraphics[width=1 \textwidth]{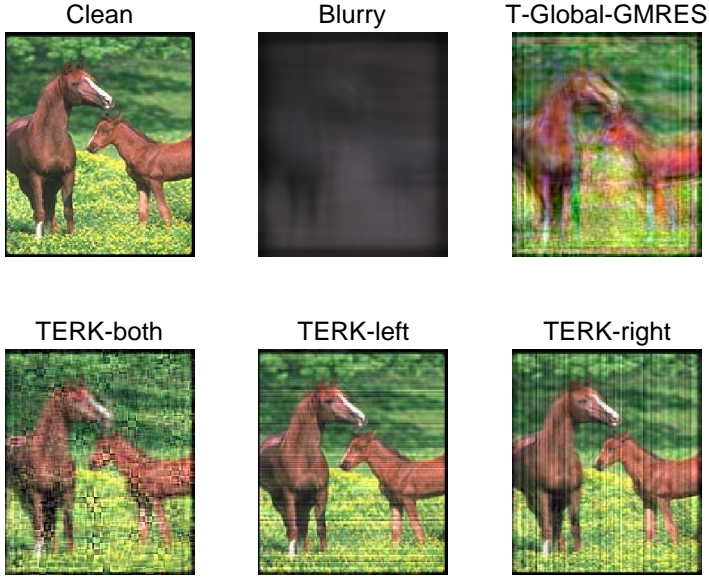}
	\caption{The original clean image, its corresponding blurry observation and the images recovered from the Tensor T-Global GMRES, TERK-both, TERK-left and TERK-right methods.}\label{fig5}
\end{figure}

%\begin{figure}[htbp]
%	\centering
%	\includegraphics[width=1 \textwidth]{pic1.eps}
%	\caption{The first slice of the clean image sequence, its corresponding blurry observation and the images recovered from the four nonadaptive %\texttt{TRK} methods 
	%	and six adaptive \texttt{TRK} methods.}\label{fig5}
%\end{figure}
We then compare the nonadaptive and adaptive TERK methods. Figure \ref{fig:1} shows that the nonadaptive TERK methods require more iteration steps and CPU time  than their corresponding adaptive methods. Again, we have %which implies 
that the adaptive sampling strategies can indeed accelerate the convergence of the nonadaptive ones.

\begin{figure}[t]
	
	\subfigure{\label{fig:1b}
		\includegraphics[width=0.5 \textwidth]{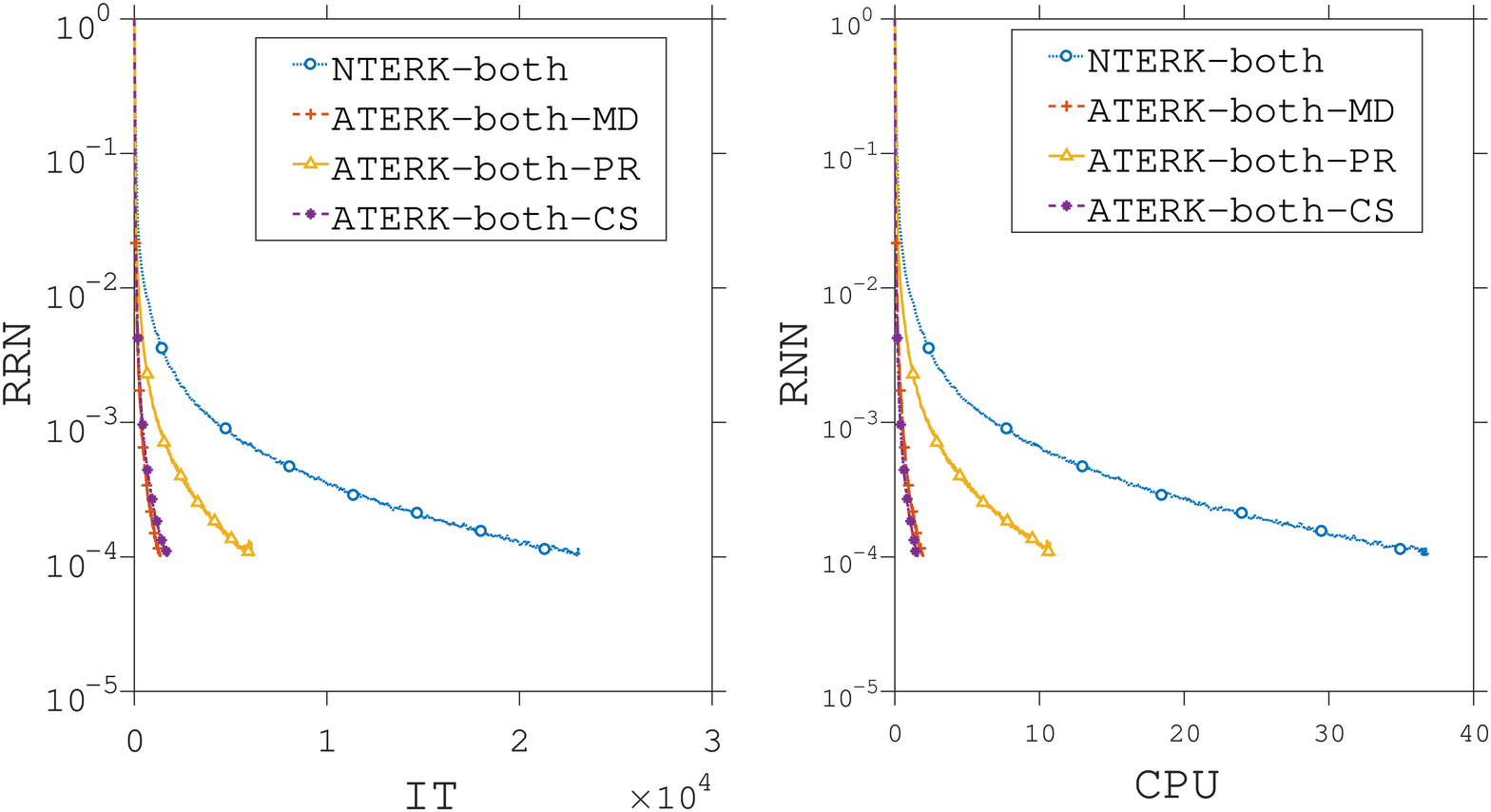}}
	\subfigure{\label{fig:1a}
		\includegraphics[width=0.5 \textwidth]{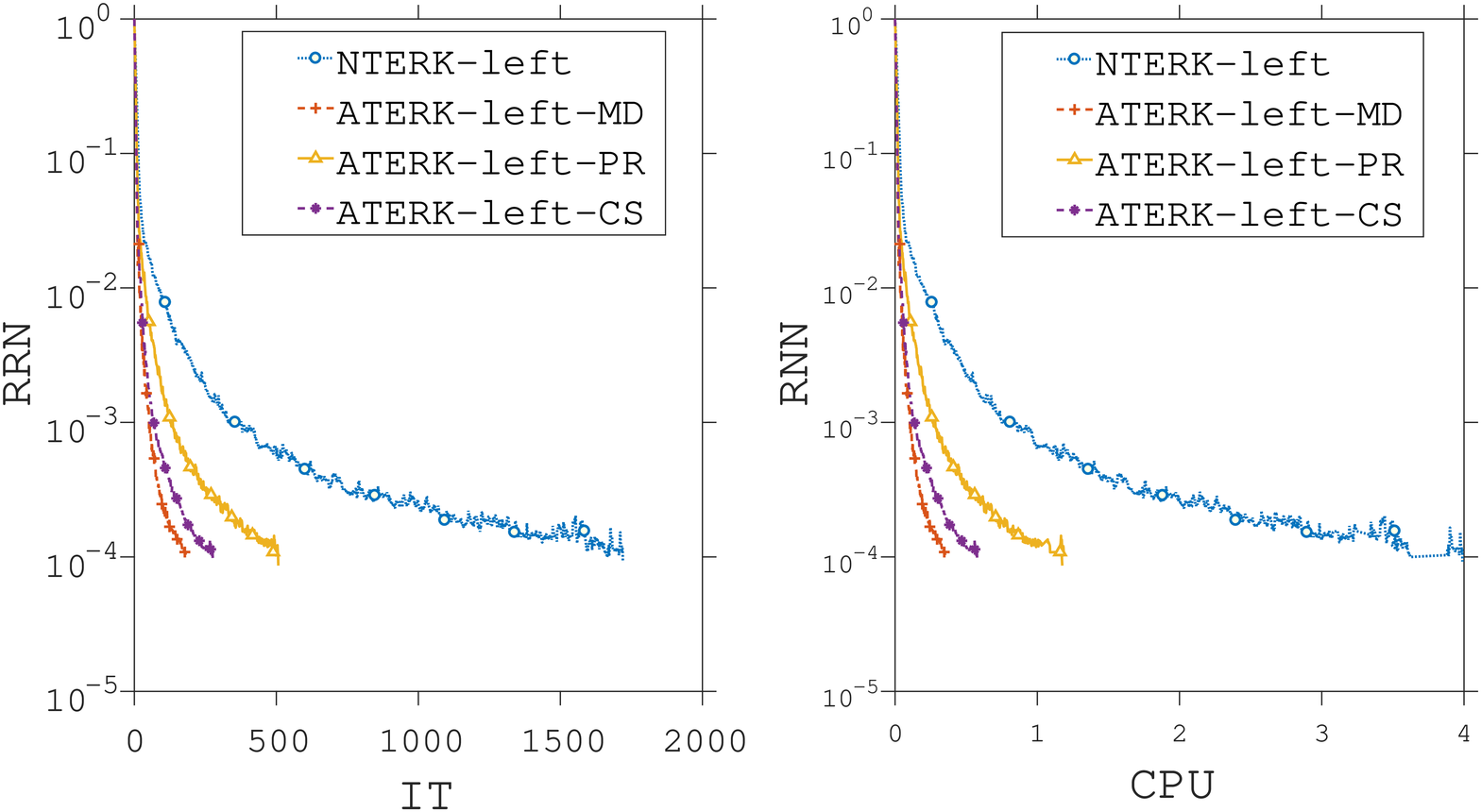}}
	
	\subfigure{\label{fig:1c}
		\includegraphics[width=0.5 \textwidth]{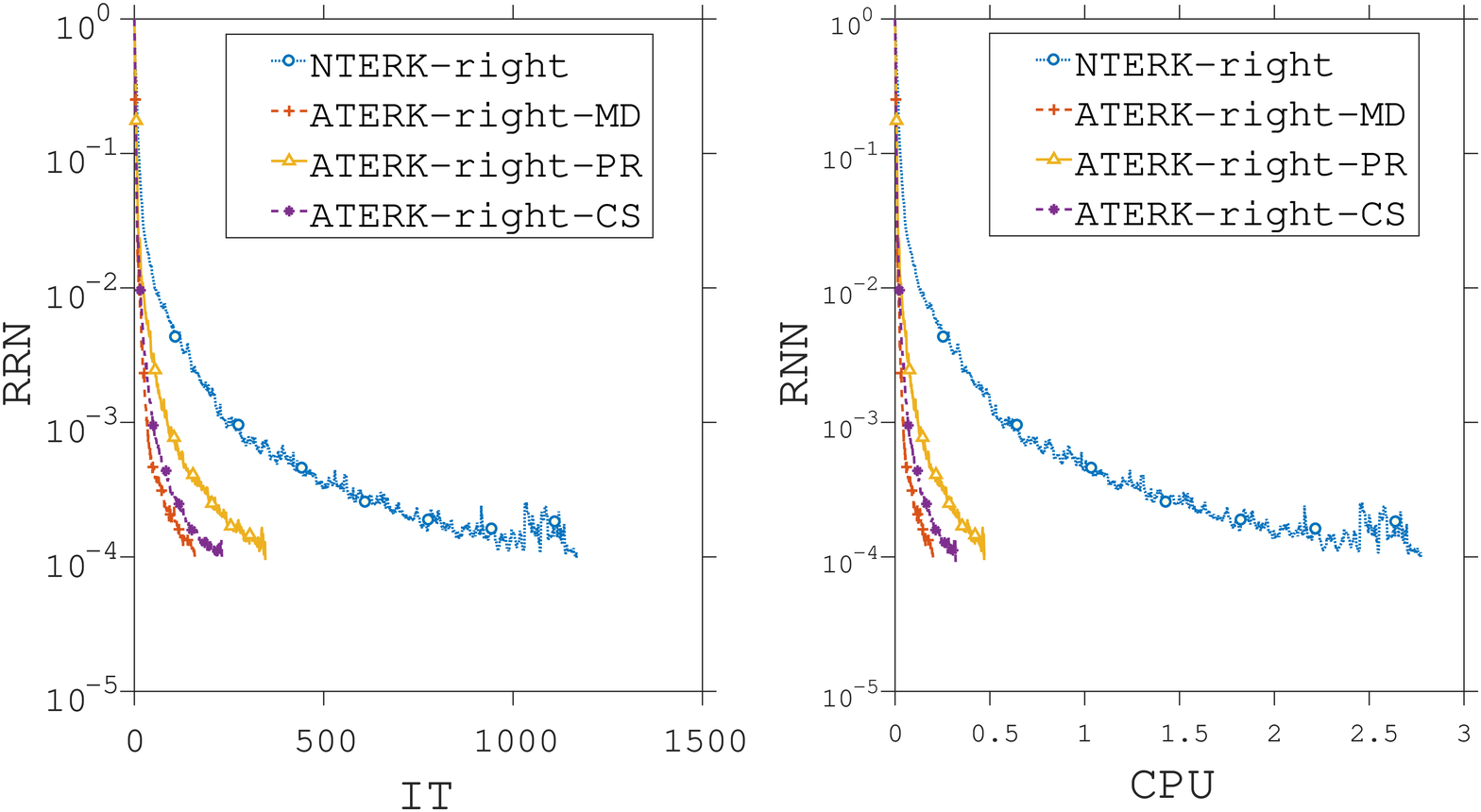}}
	
	\caption{RRN versus IT (left) and CPU (right) for the nonadaptive and adaptive TERK-both, TERK-left and TERK-right methods on real world color image data.}
	\label{fig:1}
\end{figure}
\end{example}
\section{Conclusion}
In this work, we propose the TESP method and its adaptive variants for linear tensor equations. We also discuss their efficient implementation in the Fourier domain. In theory, we analyze the convergence of all proposed methods in detail and provide the corresponding convergence factors. Numerical results show that our proposed methods are feasible and eﬀective for solving linear tensor equations and the adaptive sampling strategies can indeed accelerate the convergence of the nonadaptive ones.

%\bmhead{Acknowledgments}
%\section*{Funding}

%\section*{Declarations}
%\begin{itemize}
%\item The work is supported by the National Natural Science Foundation of China (No. 11671060) and the Natural Science Foundation Project of CQ CSTC (No. cstc2019jcyj-msxmX0267).
%\item The authors declare that they have no conﬂict of interest.
%\end{itemize}
%The authors declare that they have no conﬂict of interest.

\begin{appendices}
\section{Proofs for theoretical results}\label{app1}
\begin{proof}[Proof of Theorem~{\upshape\ref{thmTESP}}] \rm
According to the update formula (\ref{update}) and the fact $\mathcal{A}*\mathcal{X}^{\star}*\mathcal{B}=\mathcal{C}$, we have
\begin{align*}
	\mathcal{X}^{t+1}-\mathcal{X}^{\star}=&\mathcal{X}^{t}-\mathcal{M}^{-1}*\mathcal{A}^{T}*\mathcal{S}*(\mathcal{S}^{T}*\mathcal{A}*\mathcal{M}^{-1}*\mathcal{A}^{T}*\mathcal{S})^{\dag}*\mathcal{S}^{T}*(\mathcal{A}*\mathcal{X}^{t}*\mathcal{B}\\
	&-\mathcal{C})*\mathcal{V}*(\mathcal{V}^{T}*\mathcal{B}^{T}*\mathcal{N}^{-1}*\mathcal{B}*\mathcal{V})^{\dag}*\mathcal{V}^{T}*\mathcal{B}^{T}*\mathcal{N}^{-1}-\mathcal{X}^{\star}\\
	=&\mathcal{X}^{t}-\mathcal{X}^{\star}-\mathcal{M}^{-\frac{1}{2}}*\mathcal{Z}*\mathcal{M}^{\frac{1}{2}}*(\mathcal{X}^{t}-\mathcal{X}^{\star})*\mathcal{N}^{\frac{1}{2}}*\mathcal{W}*\mathcal{N}^{-\frac{1}{2}},
\end{align*}	
and multiply both $\mathcal{M}^{\frac{1}{2}}$ and $\mathcal{N}^{\frac{1}{2}}$ on its left and right sides to get
$$\mathcal{M}^{\frac{1}{2}}*(\mathcal{X}^{t+1}-\mathcal{X}^{\star})*\mathcal{N}^{\frac{1}{2}}=\mathcal{M}^{\frac{1}{2}}*(\mathcal{X}^{t}-\mathcal{X}^{\star})*\mathcal{N}^{\frac{1}{2}}-\mathcal{Z}*\mathcal{M}^{\frac{1}{2}}*(\mathcal{X}^{t}-\mathcal{X}^{\star})*\mathcal{N}^{\frac{1}{2}}*\mathcal{W}.$$
By setting $\Gamma^{t}=\mathcal{M}^{\frac{1}{2}}*(\mathcal{X}^{t}-\mathcal{X}^{\star})*\mathcal{N}^{\frac{1}{2}}$, the above equation can be rewritten as $$\Gamma^{t+1}=\Gamma^{t}-\mathcal{Z}*\Gamma^{t}*\mathcal{W}.$$
Taking the Frobenius norm on its both sides, we obtain
\begin{align*}
	\|\Gamma^{t+1}\|_{F}^{2}&=\|\Gamma^{t}-\mathcal{Z}*\Gamma^{t}*\mathcal{W}\|_{F}^{2}=\|\Gamma^{t}\|_{F}^{2}-\|\mathcal{Z}*\Gamma^{t}*\mathcal{W}\|_{F}^{2},
\end{align*}	
where the last equality follows from the Pythagorean theorem. Taking the expectation conditioned on $\mathcal{X}^{t}$ gives
\begin{align*}
	\mathbb{E}[\|\Gamma^{t+1}\|_{F}^{2}\mid\mathcal{X}^{t}]&=\|\Gamma^{t}\|_{F}^{2}-\mathbb{E}[\|\mathcal{Z}*\Gamma^{t}*\mathcal{W}\|_{F}^{2}].
\end{align*}	
Note that
\begin{align*}
	\mathbb{E}[\|\mathcal{Z}*\Gamma^{t}*\mathcal{W}\|_{F}^{2}]&=\mathbb{E}[\|\text{vec}_{t}(\mathcal{Z}*\Gamma^{t}*\mathcal{W})\|_{F}^{2}]
	=\mathbb{E}[\|(\mathcal{ W}^{ST}\otimes_{t}\mathcal{ Z})*\text{vec}_{t}(\Gamma^{t})\|_{F}^{2}]\\
	&=\mathbb{E}[\|\text{bcirc}(\mathcal{ W}^{ST}\otimes_{t}\mathcal{ Z})\text{unfold}(\text{vec}_{t}(\Gamma^{t}))\|_{F}^{2}]\\
	&=\left\langle\mathbb{E}[\text{bcirc}(\mathcal{ W}^{ST}\otimes_{t}\mathcal{ Z})]\text{unfold}(\text{vec}_{t}(\Gamma^{t})),\text{unfold}(\text{vec}_{t}(\Gamma^{t}))\right\rangle\\
	&\geq\lambda_{\min}(\mathbb{E}[\text{bcirc}(\mathcal{ W}^{ST}\otimes_{t}\mathcal{ Z})])\|\text{unfold}(\text{vec}_{t}(\Gamma^{t}))\|_{2}^{2}\\
	&=\lambda_{\min}(\mathbb{E}[\text{bcirc}(\mathcal{ W}\otimes_{t}\mathcal{ Z})])\|\Gamma^{t}\|_{F}^{2},
	%&\geq \lambda_{\min}(\mathbb{E}[{\text{bcirc}}(\mathcal{ Z})])\lambda_{\min}(\mathbb{E}[{\text{bcirc}}(\mathcal{ W})])\|\Gamma^{t}\|_{F}^{2}
\end{align*}	
where the inequality is from that $\mathbb{E}[\text{bcirc}(\mathcal{ W}^{ST}\otimes_{t}\mathcal{ Z})]$ is symmetric positive deﬁnite with probability 1, which can be obtained by the assumption that $\mathbb{E}[\mathcal{Z}]$ and $\mathbb{E}[\mathcal{W}]$ are T-symmetric T-positive definite with probability $1$, and the last inequality is because
\begin{align*}
	&\lambda_{\min}(\mathbb{E}[\text{bcirc}(\mathcal{W}^{ST}\otimes_{t}\mathcal{Z})])=\min\limits_{k\in [l]}\lambda_{\min}(\mathbb{E}[\widehat{\mathcal{W}}_{(k)}^{T}\otimes\widehat{\mathcal{ Z}}_{(k)}])\\
 =&\min\limits_{k\in [l]}\lambda_{\min}(\mathbb{E}[\widehat{\mathcal{ W}}_{(k)}^{T}])\lambda_{\min}(\mathbb{E}[\widehat{\mathcal{Z}}_{(k)}])
	=\min\limits_{k\in [l]}\lambda_{\min}(\mathbb{E}[\widehat{\mathcal{ W}}_{(k)}])\lambda_{\min}(\mathbb{E}[\widehat{\mathcal{ Z}}_{(k)}])\\
 =&\min\limits_{k\in [l]}\lambda_{\min}(\mathbb{E}[\widehat{\mathcal{W}}_{(k)}\otimes\widehat{\mathcal{ Z}}_{(k)}])
	=\lambda_{\min}(\mathbb{E}[\text{bcirc}(\mathcal{W}\otimes_{t}\mathcal{Z})]).
\end{align*}
Therefore,
\begin{align*}
	\mathbb{E}[\|\Gamma^{t+1}\|_{F}^{2}\mid\mathcal{X}^{t}]&\leq(1-\lambda_{\min}(\mathbb{E}[\text{bcirc}(\mathcal{ W}\otimes_{t}\mathcal{ Z})]))\|\Gamma^{t}\|_{F}^{2},
	%&=(1-\lambda_{\min}(\mathbb{E}[\text{bcirc}(\mathcal{ W}\otimes_{t}\mathcal{ Z})]))\|\mathcal{X}^{t}-\mathcal{X}^{\star}\|_{F(\mathcal{Q},\mathcal{ R})}^{2}
	%&\leq(1-\lambda_{\min}(\mathbb{E}[{\text{bcirc}}(\mathcal{ Z})])\lambda_{\min}(\mathbb{E}[{\text{bcirc}}(\mathcal{ W})]))\|\Gamma^{t}\|_{F}^{2}.
\end{align*}	
that is,
\begin{align*}
	\mathbb{E}[\|\mathcal{X}^{t+1}-\mathcal{X}^{\star}\|_{F(\mathcal{M},\mathcal{ N})}^{2}\mid\mathcal{X}^{t}]&\leq\rho_{\text{TESP}}\|\mathcal{X}^{t}-\mathcal{X}^{\star}\|_{F(\mathcal{M},\mathcal{ N})}^{2},
\end{align*}	
where $\rho_{\text{TESP}}=1-\lambda_{\min}(\mathbb{E}[\text{\rm bcirc}(\mathcal{ W}\otimes_{t}\mathcal{ Z})])$.
Taking the  full  expectation and unrolling the recurrence give this theorem.
\end{proof}

\begin{proof}[Proof of Lemma~{\upshape\ref{defdelta}}]\rm
From $\text{vec}_{t}(\mathcal{X}^{0})\in \mathbf{Range}((\mathcal{N}^{-1})^{ST}*\mathcal{ B}^{C}\otimes_{t}\mathcal{M}^{-1}*\mathcal{A}^{T})$, we can get $\text{vec}_{t}(\mathcal{X}^{t}-\mathcal{X}^{\star})\in \mathbf{Range}((\mathcal{N}^{-1})^{ST}*\mathcal{ B}^{C}\otimes_{t}\mathcal{M}^{-1}*\mathcal{A}^{T})$, and then combining with
\begin{align*}
	f_{i,j}(\mathcal{X}^{t})&=\|\mathcal{Z}_{i}*\mathcal{M}^{\frac{1}{2}}*(\mathcal{X}^{t}-\mathcal{X}^{\star})*\mathcal{N}^{\frac{1}{2}}*\mathcal{W}_{j}\|_{F}^{2}\\
	&=\|((\mathcal{ W}_{j}^{ST}*(\mathcal{N}^{\frac{1}{2}})^{ST})\otimes_{t}(\mathcal{ Z}_{i}*\mathcal{M}^{\frac{1}{2}}))*\text{vec}_{t}(\mathcal{X}^{t}-\mathcal{X}^{\star})\|_{F}^{2}\\
	%&=\|(\mathcal{ W}_{j}^{ST}\otimes_{t}\mathcal{ Z}_{i})*((\mathcal{N}^{\frac{1}{2}})^{ST}\otimes_{t} \mathcal{M}^{\frac{1}{2}})*\text{vec}_{t}(\mathcal{X}^{t}-\mathcal{X}^{\star})\|_{F}^{2}\\
	&=\|((\mathcal{N}^{\frac{1}{2}})^{ST}\otimes_{t} \mathcal{M}^{\frac{1}{2}})*\text{vec}_{t}(\mathcal{X}^{t}-\mathcal{X}^{\star})\|_{\mathcal{ W}_{j}^{ST}\otimes_{t}\mathcal{ Z}_{i}}^{2},
\end{align*}
we further obtain
\begin{align*}
&\frac{\mathop{\max}\limits_{i\in[q_{\mathcal{S}}], j\in[q_{\mathcal{V}}]}f_{i,j}(\mathcal{X}^{t})}{\|\mathcal{X}^{t}-\mathcal{X}^{\star}\|_{F(\mathcal{M},\mathcal{N})}^{2}}\\
=&\frac{\mathop{\max}\limits_{i\in[q_{\mathcal{S}}], j\in[q_{\mathcal{V}}]}\|((\mathcal{N}^{\frac{1}{2}})^{ST}\otimes_{t} \mathcal{M}^{\frac{1}{2}})*\text{vec}_{t}(\mathcal{X}^{t}-\mathcal{X}^{\star})\|_{\mathcal{ W}_{j}^{ST}\otimes_{t}\mathcal{ Z}_{i}}^{2}}{\|\text{vec}_{t}(\mathcal{X}^{t}-\mathcal{X}^{\star})\|_{\mathcal{N}^{ST}\otimes_{t} \mathcal{M}}^{2}}\\
\geq&\mathop{\min}_{\overrightarrow{\mathcal{Y}}\in \mathbf{Range}(((\mathcal{N}^{-1})^{ST}*\mathcal{ B}^{C})\otimes_{t}(\mathcal{M}^{-1}*\mathcal{A}^{T}))}\mathop{\max}\limits_{i\in[q_{\mathcal{ S}}], j\in[q_{\mathcal{ V}}]}
 \\
&\frac{\|((\mathcal{N}^{\frac{1}{2}})^{ST}\otimes_{t} \mathcal{M}^{\frac{1}{2}})*\overrightarrow{\mathcal{Y}}\|_{\mathcal{ W}_{j}^{ST}\otimes_{t}\mathcal{ Z}_{i}}^{2}}{\|\overrightarrow{\mathcal{Y}}\|_{\mathcal{N}^{ST}\otimes_{t} \mathcal{M}}^{2}}=\delta_{\infty}^{2}(\mathcal{M},\mathcal{ N},\boldsymbol{\mathcal{S}},\boldsymbol{\mathcal{V}}).
\end{align*}
Similarly, we have
\begin{align*}
&\frac{\mathbb{E}_{i\sim \mathbf{p}_{\mathcal{S}},j\sim \mathbf{p}_{\mathcal{V}}}[f_{i,j}(\mathcal{X}^{t})]}{\|\mathcal{X}^{t}-\mathcal{X}^{\star}\|_{F(\mathcal{M},\mathcal{N})}^{2}}\\
=&\frac{\mathbb{E}_{i\sim \mathbf{p}_{\mathcal{S}},j\sim \mathbf{p}_{\mathcal{V}}}[\|((\mathcal{N}^{\frac{1}{2}})^{ST}\otimes_{t} \mathcal{M}^{\frac{1}{2}})*\text{vec}_{t}(\mathcal{X}^{t}-\mathcal{X}^{\star})\|_{\mathcal{ W}_{j}^{ST}\otimes_{t}\mathcal{ Z}_{i}}^{2}]}{\|\text{vec}_{t}(\mathcal{X}^{t}-\mathcal{X}^{\star})\|_{\mathcal{N}^{ST}\otimes_{t} \mathcal{M}}^{2}}\\	\geq&\mathop{\min}_{\overrightarrow{\mathcal{Y}}\in \mathbf{Range}(((\mathcal{N}^{-1})^{ST}*\mathcal{ B}^{C})\otimes_{t}(\mathcal{M}^{-1}*\mathcal{A}^{T}))}
\\	
&\frac{\mathbb{E}_{i\sim \mathbf{p}_{\mathcal{S}},j\sim \mathbf{p}_{\mathcal{V}}}[\|((\mathcal{N}^{\frac{1}{2}})^{ST}\otimes_{t} \mathcal{M}^{\frac{1}{2}})*\overrightarrow{\mathcal{Y}}\|_{\mathcal{ W}_{j}^{ST}\otimes_{t}\mathcal{ Z}_{i}}^{2}]}{\|\overrightarrow{\mathcal{Y}}\|_{\mathcal{N}^{ST}\otimes_{t} \mathcal{M}}^{2}}\\
=&\mathop{\min}_{\overrightarrow{\mathcal{Y}}\in \mathbf{Range}(((\mathcal{N}^{-1})^{ST}*\mathcal{ B}^{C})\otimes_{t}(\mathcal{M}^{-1}*\mathcal{A}^{T}))}\\	
&\frac{\|((\mathcal{N}^{\frac{1}{2}})^{ST}\otimes_{t} \mathcal{M}^{\frac{1}{2}})*\overrightarrow{\mathcal{Y}}\|_{\mathbb{E}_{j\sim \mathbf{p}_{\mathcal{V}}}[\mathcal{ W}_{j}^{ST}] \otimes_{t} \mathbb{E}_{i\sim \mathbf{p}_{\mathcal{S}}}[\mathcal{ Z}_{i}]}^{2}}{\|\overrightarrow{\mathcal{Y}}\|_{\mathcal{N}^{ST}\otimes_{t} \mathcal{M}}^{2}}=\delta_{\mathbf{p}_{\mathcal{S}}, \mathbf{p}_{\mathcal{V}}}^{2}(\mathcal{M},\mathcal{ N},\boldsymbol{\mathcal{S}},\boldsymbol{\mathcal{V}}).
\end{align*}
\end{proof}

\begin{proof}[Proof of Lemma~{\upshape\ref{sizedelta}}]\rm
Since $\mathbb{E}_{i\sim \mathbf{p}_{\mathcal{ S}}}[\mathcal{ Z}_{i}]=\mathcal{M}^{-\frac{1}{2}}*\mathcal{A}^{T}*\mathbb{E}_{i\sim \mathbf{p}_{\mathcal{ S}}}[\mathcal{E}_{i}]*\mathcal{ A}*\mathcal{M}^{-\frac{1}{2}}$ and $\mathbb{E}_{j\sim \mathbf{p}_{\mathcal{ V}}}[\mathcal{ W}_{j}]=\mathcal{N}^{-\frac{1}{2}}*\mathcal{ B}*\mathbb{E}_{j\sim \mathbf{p_{\mathcal{ V}}}}[\mathcal{G}_{j}]*\mathcal{ B}^{T}*\mathcal{N}^{-\frac{1}{2}}$ are T-symmetric T-positive deﬁnite with probability 1, we have that
\begin{align*}
	&\mathbb{E}_{j\sim \mathbf{p}_{\mathcal{ V}}}[\mathcal{ W}_{j}^{ST}]\otimes_{t}\mathbb{E}_{i\sim \mathbf{p}_{\mathcal{ S}}}[\mathcal{ Z}_{i}]\\
=&\left((\mathcal{N}^{-\frac{1}{2}})^{ST}*\mathcal{ B}^{C}*\mathbf{E}_{j\sim \mathbf{p}_{\mathcal{ V}}}[\mathcal{G}_{j}^{ST}]*\mathcal{ B}^{ST}*(\mathcal{N}^{-\frac{1}{2}})^{ST}\right)
\\	
&\otimes_{t} \left(\mathcal{M}^{-\frac{1}{2}}*\mathcal{A}^{T}*\mathbf{E}_{i\sim \mathbf{p}_{\mathcal{ S}}}[\mathcal{E}_{i}]*\mathcal{ A}*\mathcal{M}^{-\frac{1}{2}}\right)\\
	=&\left(((\mathcal{N}^{-\frac{1}{2}})^{ST}*\mathcal{ B}^{C})\otimes_{t} (\mathcal{M}^{-\frac{1}{2}}*\mathcal{A}^{T})\right)*\left(\mathbf{E}_{j\sim \mathbf{p_{\mathcal{ V}}}}[\mathcal{G}_{j}^{ST}]\otimes \mathbf{E}_{i\sim \mathbf{p_{\mathcal{ S}}}}[\mathcal{E}_{i}]\right)
 \\	
&*\left((\mathcal{ B}^{ST}*(\mathcal{N}^{-\frac{1}{2}})^{ST})\otimes (\mathcal{ A}*\mathcal{M}^{-\frac{1}{2}})\right)
\end{align*}
is also T-symmetric T-positive deﬁnite with probability 1. Thus, we have
\begin{footnotesize}
\begin{align*}
\mathbf{Range}(((\mathcal{N}^{-\frac{1}{2}})^{ST}*\mathcal{ B}^{C})\otimes_{t} (\mathcal{M}^{-\frac{1}{2}}*\mathcal{A}^{T}))&=\mathbf{Range}\left(\mathbb{E}_{j\sim \mathbf{p}_{\mathcal{ V}}}[\mathcal{ W}_{j}^{ST}]\otimes_{t} \mathbb{E}_{i\sim \mathbf{p}_{\mathcal{ S}}}[\mathcal{ Z}_{i}]\right)=\mathbb{K}_{l}^{rs}.
\end{align*}
\end{footnotesize}
%\begin{align*}
%	&((\mathcal{R}^{-\frac{1}{2}})^{ST}*\mathcal{ B}^{C}\otimes_{t} \mathcal{Q}^{-\frac{1}{2}}*\mathcal{A}^{T})*\text{vec}_{t}(\mathcal{X})=	((\mathcal{ B}^{T}*\mathcal{R}^{-\frac{1}{2}})^{ST}\otimes_{t} \mathcal{Q}^{-\frac{1}{2}}*\mathcal{A}^{T})*\text{vec}_{t}(\mathcal{X})\\
%	&=\text{vec}_{t}(\mathcal{Q}^{-\frac{1}{2}}*\mathcal{A}^{T}*\mathcal{X}*\mathcal{ B}^{T}*\mathcal{R}^{-\frac{1}{2}})
%\end{align*}
Therefore,
\begin{align*}
\delta_{\mathbf{p}_{\mathcal{S}}, \mathbf{p}_{\mathcal{V}}}^{2}(\mathcal{M},\mathcal{ N},\boldsymbol{\mathcal{S}},\boldsymbol{\mathcal{V}})=&\mathop{\min}_{\overrightarrow{\mathcal{Y}}\in \mathbf{Range}(((\mathcal{N}^{-1})^{ST}*\mathcal{ B}^{C})\otimes_{t}(\mathcal{M}^{-1}*\mathcal{A}^{T}))}\\
&\frac{\|((\mathcal{N}^{\frac{1}{2}})^{ST}\otimes_{t} \mathcal{M}^{\frac{1}{2}})*\overrightarrow{\mathcal{Y}}\|_{\mathbb{E}_{j\sim \mathbf{p}_{\mathcal{V}}}[\mathcal{ W}_{j}^{ST}] \otimes_{t} \mathbb{E}_{i\sim \mathbf{p}_{\mathcal{S}}}[\mathcal{ Z}_{i}]}^{2}}{\|\overrightarrow{\mathcal{Y}}\|_{\mathcal{N}^{ST}\otimes_{t} \mathcal{M}}^{2}}\\
%&=\mathop{\min}_{((\mathcal{N}^{\frac{1}{2}})^{ST}\otimes_{t} \mathcal{M}^{\frac{1}{2}})*\overrightarrow{\mathcal{Y}}\in \mathbf{Range}(((\mathcal{N}^{-\frac{1}{2}})^{ST}*\mathcal{ B}^{C})\otimes_{t}(\mathcal{M}^{-\frac{1}{2}}*\mathcal{A}^{T}))}\\
%&\quad\frac{\|((\mathcal{N}^{\frac{1}{2}})^{ST}\otimes_{t} \mathcal{M}^{\frac{1}{2}})*\overrightarrow{\mathcal{Y}}\|_{\mathbb{E}_{j\sim \mathbf{p}_{\mathcal{V}}}[\mathcal{ W}_{j}^{ST}] \otimes_{t} \mathbb{E}_{i\sim \mathbf{p}_{ \mathcal{S}}}[\mathcal{ Z}_{i}]}^{2}}{\|((\mathcal{N}^{\frac{1}{2}})^{ST}\otimes_{t} \mathcal{M}^{\frac{1}{2}})*\overrightarrow{\mathcal{Y}}\|_{F}^{2}}\\
=&\mathop{\min}_{\text{unfold}(((\mathcal{N}^{\frac{1}{2}})^{ST}\otimes_{t} \mathcal{M}^{\frac{1}{2}})*\overrightarrow{\mathcal{Y}})\in \mathbb{R}^{rsl}}\\&
\frac{\|\text{unfold}(((\mathcal{N}^{\frac{1}{2}})^{ST}\otimes_{t} \mathcal{M}^{\frac{1}{2}})*\overrightarrow{\mathcal{Y}})\|_{\text{bcirc}(\mathbb{E}_{j\sim \mathbf{p}_{\mathcal{V}}}[\mathcal{ W}_{j}^{ST}] \otimes_{t} \mathbb{E}_{i\sim \mathbf{p}_{\mathcal{S}}}[\mathcal{ Z}_{i}])}^{2}}{\|\text{unfold}(((\mathcal{N}^{\frac{1}{2}})^{ST}\otimes_{t} \mathcal{M}^{\frac{1}{2}})*\overrightarrow{\mathcal{Y}})\|_{2}^{2}}\\
	=&\lambda_{\min}\left(\text{bcirc}(\mathbb{E}_{j\sim \mathbf{p}_{\mathcal{V}}}[\mathcal{ W}_{j}^{ST}] \otimes_{t} \mathbb{E}_{i\sim \mathbf{p}_{\mathcal{S}}}[\mathcal{ Z}_{i}])\right)\\
	%=&\lambda_{\min}\left(\mathbb{E}_{i\sim \mathbf{p}_{\mathcal{ S}},j\sim \mathbf{p}_{\mathcal{V}}}[\text{bcirc}(\mathcal{ W}_{j}^{ST} \otimes_{t}\mathcal{ Z}_{i})]\right)\\
	=&\lambda_{\min}\left(\mathbb{E}_{i\sim \mathbf{p}_{\mathcal{ S}},j\sim \mathbf{p}_{\mathcal{V}}}[\text{bcirc}(\mathcal{ W}_{j} \otimes_{t}\mathcal{ Z}_{i})]\right)> 0,
	%&\geq\lambda_{\min}\left(\text{bcirc}(\mathbb{E}_{j\sim \mathbf{p}_{\mathcal{B}}}[\mathcal{ W}_{j}])\right)\lambda_{\min}\left(\text{bcirc}( \mathbb{E}_{i\sim \mathbf{p}_{\mathcal{A}}}[\mathcal{ Z}_{i}])\right)> 0
\end{align*}
and
\begin{align*}
\delta_{\mathbf{p}_{\mathcal{S}}, \mathbf{p}_{\mathcal{ V}}}^{2}(\mathcal{M},\mathcal{ N},\boldsymbol{\mathcal{S}},\boldsymbol{\mathcal{V}})&=\mathop{\min}_{\overrightarrow{\mathcal{Y}}\in \mathbf{Range}(((\mathcal{N}^{-1})^{ST}*\mathcal{ B}^{C})\otimes_{t}(\mathcal{M}^{-1}*\mathcal{A}^{T}))}\\
&\quad\frac{\|((\mathcal{N}^{\frac{1}{2}})^{ST}\otimes_{t} \mathcal{M}^{\frac{1}{2}})*\overrightarrow{\mathcal{Y}}\|_{\mathbb{E}_{j\sim \mathbf{p}_{\mathcal{V}}}[\mathcal{ W}_{j}^{ST}] \otimes_{t} \mathbb{E}_{i\sim \mathbf{p}_{\mathcal{S}}}[\mathcal{ Z}_{i}]}^{2}}{\|\overrightarrow{\mathcal{Y}}\|_{\mathcal{N}^{ST}\otimes_{t} \mathcal{M}}^{2}}\\
&=\mathop{\min}_{\overrightarrow{\mathcal{Y}}\in \mathbf{Range}(((\mathcal{N}^{-1})^{ST}*\mathcal{ B}^{C})\otimes_{t}(\mathcal{M}^{-1}*\mathcal{A}^{T}))}\\
&\quad\frac{\mathbb{E}_{i\sim \mathbf{p}_{\mathcal{S}}, j\sim \mathbf{p}_{\mathcal{V}}}[\|((\mathcal{N}^{\frac{1}{2}})^{ST}\otimes_{t} \mathcal{M}^{\frac{1}{2}})*\overrightarrow{\mathcal{Y}}\|_{\mathcal{ W}_{j}^{ST} \otimes_{t} \mathcal{ Z}_{i}}^{2}]}{\|\overrightarrow{\mathcal{Y}}\|_{\mathcal{N}^{ST}\otimes_{t} \mathcal{M}}^{2}}\\
&\leq\mathop{\min}_{\overrightarrow{\mathcal{Y}}\in \mathbf{Range}(((\mathcal{N}^{-1})^{ST}*\mathcal{ B}^{C})\otimes_{t}(\mathcal{M}^{-1}*\mathcal{A}^{T}))}\mathop{\max}_{i\in[q_{\mathcal{S}}], j\in[q_{\mathcal{V}}]}\\
&\quad\frac{\|((\mathcal{N}^{\frac{1}{2}})^{ST}\otimes_{t} \mathcal{M}^{\frac{1}{2}})*\overrightarrow{\mathcal{Y}}\|_{\mathcal{ W}_{j}^{ST}\otimes_{t}\mathcal{ Z}_{i}}^{2}}{\|\overrightarrow{\mathcal{Y}}\|_{\mathcal{N}^{ST}\otimes_{t} \mathcal{M}}^{2}}\\
	&=\delta_{\infty}^{2}(\mathcal{M},\mathcal{ N},\boldsymbol{\mathcal{S}},\boldsymbol{\mathcal{V}}).
\end{align*}
Finally, from the fact that the tubal matrices $ \mathcal{ Z}_{i}$ and $\mathcal{ W}_{j}$ are orthogonal projectors, we get
\begin{align*}
	\delta_{\infty}^{2}(\mathcal{M},\mathcal{ N},\boldsymbol{\mathcal{S}},\boldsymbol{\mathcal{V}})=& \mathop{\min}_{\overrightarrow{\mathcal{Y}}\in \mathbf{Range}(((\mathcal{N}^{-1})^{ST}*\mathcal{ B}^{C})\otimes_{t}(\mathcal{M}^{-1}*\mathcal{A}^{T}))}\mathop{\max}_{i\in[q_{\mathcal{S}}], j\in[q_{\mathcal{V}}]}\\
 &\frac{\|((\mathcal{N}^{\frac{1}{2}})^{ST}\otimes_{t} \mathcal{M}^{\frac{1}{2}})*\overrightarrow{\mathcal{Y}}\|_{\mathcal{ W}_{j}^{ST}\otimes_{t}\mathcal{ Z}_{i}}^{2}}{\|\overrightarrow{\mathcal{Y}}\|_{\mathcal{N}^{ST}\otimes_{t} \mathcal{M}}^{2}}\\
	\leq&\mathop{\max}_{i\in[q_{\mathcal{S}}], j\in[q_{\mathcal{V}}]}\frac{\|((\mathcal{N}^{\frac{1}{2}})^{ST}\otimes_{t} \mathcal{M}^{\frac{1}{2}})*\overrightarrow{\mathcal{Y}}\|_{\mathcal{ W}_{j}^{ST}\otimes_{t}\mathcal{ Z}_{i}}^{2}}{\|\overrightarrow{\mathcal{Y}}\|_{\mathcal{N}^{ST}\otimes_{t} \mathcal{M}}^{2}}\\
	%=&\mathop{\max}_{i\in[q_{\mathcal{S}}], j\in[q_{\mathcal{V}}]}\frac{\|(\mathcal{ W}_{j}^{ST}\otimes_{t}\mathcal{ Z}_{i})*((\mathcal{N}^{\frac{1}{2}})^{ST}\otimes_{t} \mathcal{M}^{\frac{1}{2}})*\overrightarrow{\mathcal{Y}}\|_{F}^{2}}{\|((\mathcal{N}^{\frac{1}{2}})^{ST}\otimes_{t} \mathcal{M}^{\frac{1}{2}})*\overrightarrow{\mathcal{Y}}\|_{F}^{2}}\\
	\leq&\mathop{\max}_{i\in[q_{\mathcal{S}}], j\in[q_{\mathcal{V}}]}\frac{\|((\mathcal{N}^{\frac{1}{2}})^{ST}\otimes_{t} \mathcal{M}^{\frac{1}{2}})*\overrightarrow{\mathcal{Y}}\|_{F}^{2}}{\|((\mathcal{N}^{\frac{1}{2}})^{ST}\otimes_{t} \mathcal{M}^{\frac{1}{2}})*\overrightarrow{\mathcal{Y}}\|_{F}^{2}}=1.
\end{align*}
\end{proof}

\begin{proof}[Proof of Theorem~{\upshape\ref{thmNTESP}}]\rm
 Using {\ref{expectation}} and (\ref{deex}), we have
\begin{align*}
	&\mathbb{E}[\|\mathcal{X}^{t+1}-X^{\star}\|_{F(\mathcal{M},\mathcal{ N})}^{2}\mid\mathcal{X}^{t}]=\|\mathcal{X}^{t}-\mathcal{X}^{\star}\|_{F(\mathcal{ M},\mathcal{ N})}^{2}-\mathbb{E}_{i\sim \mathbf{p}_{\mathcal{ S}}, j\sim \mathbf{p}_{\mathcal{ V}}}[f_{i,j}(\mathcal{X}^{t})]\\
	\leq&\|\mathcal{ X}^{t}-\mathcal{ X}^{\star}\|_{F(\mathcal{M},\mathcal{ N})}^{2}-\delta_{\mathbf{p}_{\mathcal{ S}}, \mathbf{p}_{\mathcal{ V}}}^{2}(\mathcal{M},\mathcal{ N},\boldsymbol{\mathcal{S}},\boldsymbol{\mathcal{V}})\|\mathcal{ X}^{t}-\mathcal{ X}^{\star}\|_{F(\mathcal{M},\mathcal{ N})}^{2}\\
	=&(1-\delta_{\mathbf{p}_{\mathcal{ S}}, \mathbf{p}_{\mathcal{ V}}}^{2}(\mathcal{M},\mathcal{ N},\boldsymbol{\mathcal{S}},\boldsymbol{\mathcal{V}}))\|\mathcal{ X}^{t}-\mathcal{ X}^{\star}\|_{F(\mathcal{M},\mathcal{ N})}^{2}.
\end{align*}
Taking the  full  expectation and unrolling the recurrence give this theorem.
\end{proof}

\begin{proof}[Proof of Theorem~{\upshape\ref{thmATESPMD}}]\rm
	 Combining (\ref{secloss}) and (\ref{dein}) yields
\begin{align*}
	\|\mathcal{ X}^{t+1}-\mathcal{ X}^{\star}\|_{F(\mathcal{M},\mathcal{ N})}^{2}&=\|\mathcal{ X}^{t}-\mathcal{ X}^{\star}\|_{F(\mathcal{M},\mathcal{ N})}^{2}-\mathop{\max}\limits_{i\in[q_{\mathcal{ S}}], j\in[q_{\mathcal{ V}}]}f_{i,j}(\mathcal{ X}^{t})\\
	&\leq\|\mathcal{ X}^{t}-\mathcal{ X}^{\star}\|_{F(\mathcal{M},\mathcal{ N})}^{2}-\delta_{\infty}^{2}(\mathcal{M},\mathcal{ N},\boldsymbol{\mathcal{S}},\boldsymbol{\mathcal{V}})\|\mathcal{ X}^{t}-\mathcal{ X}^{\star}\|_{F(\mathcal{M},\mathcal{ N})}^{2}\\
	&=(1-\delta_{\infty}^{2}(\mathcal{M},\mathcal{ N},\boldsymbol{\mathcal{S}},\boldsymbol{\mathcal{V}}))\|\mathcal{ X}^{t}-\mathcal{ X}^{\star}\|_{F(\mathcal{M},\mathcal{ N})}^{2}.
\end{align*}
Taking the  full  expectation and unrolling the recurrence give this theorem.
\end{proof}

\begin{proof}[Proof of Theorem~{\upshape\ref{thmATESPRP}}]\rm
Since $i\sim \mathbf{u}_{\mathcal{ S}}$ and $j\sim \mathbf{u}_{\mathcal{ V}}$, we have  $(i,j)\sim \mathbf{u}_{\mathcal{ S},\mathcal{ V}}$, where $u_{i,j}=\frac{1}{q_{\mathcal{ S}}q_{\mathcal{ V}}}$ for $i=1,\cdots,q_{\mathcal{ S}}$ and $j=1,\cdots,q_{\mathcal{ V}}$. Hence,
\begin{align*}
	\textbf{Var}_{(i,j)\sim\mathbf{u}_{\mathcal{ S},\mathcal{ V}}}[f_{i,j}(\mathcal{ X}^{t})]&=\mathbb{E}_{(i,j)\sim\mathbf{u}_{\mathcal{ S},\mathcal{ V}}}[f_{i,j}(\mathcal{ X}^{t})^{2}]-	\mathbb{E}_{(i,j)\sim\mathbf{u}_{\mathcal{ S},\mathcal{ V}}}[f_{i,j}(\mathcal{ X}^{t})]^{2}\\
	&=\frac{1}{q_{\mathcal{ S}}q_{\mathcal{ V}}}\sum_{i=1}^{q_{\mathcal{ S}}}\sum_{j=1}^{q_{\mathcal{ V}}}f_{i,j}(\mathcal{ X}^{t})^{2}-\left(\frac{1}{q_{\mathcal{ S}}q_{\mathcal{ V}}}\sum_{i=1}^{q_{\mathcal{ S}}}\sum_{j=1}^{q_{\mathcal{ V}}}f_{i,j}(\mathcal{ X}^{t})\right)^{2}\\
	&=\frac{1}{q_{\mathcal{ S}}q_{\mathcal{ V}}}\sum_{i=1}^{q_{\mathcal{ S}}}\sum_{j=1}^{q_{\mathcal{ V}}}f_{i,j}(\mathcal{ X}^{t})^{2}-\frac{1}{q_{\mathcal{ S}}^{2}q_{\mathcal{ V}}^{2}}\left(\sum_{i=1}^{q_{\mathcal{ S}}}\sum_{j=1}^{q_{\mathcal{ V}}}f_{i,j}(\mathcal{ X}^{t})\right)^{2},
\end{align*}
which together with (\ref{deex}) and the definition of $\mathbf{p}_{\mathcal{ S},\mathcal{ V}}^{t}$ in the second case of Algorithm \ref{ATESP} implies
\begin{align*}
	&\mathbb{E}_{(i,j)\sim\mathbf{p}_{\mathcal{ S},\mathcal{ V}}^{t}}[f_{i,j}(\mathcal{ X}^{t})]%=\sum_{i=1}^{q_{\mathcal{ S}}}\sum_{j=1}^{q_{\mathcal{ V}}}p_{i,j}^{t}f_{i,j}(\mathcal{ X}^{t})
   =\sum_{i=1}^{q_{\mathcal{ S}}}\sum_{j=1}^{q_{\mathcal{ V}}}\frac{f_{i,j}(\mathcal{ X}^{t})}{\sum_{i=1}^{q_{\mathcal{ S}}}\sum_{j=1}^{q_{\mathcal{ V}}}f_{i,j}(\mathcal{ X}^{t})}f_{i,j}(\mathcal{ X}^{t})\\
	%&=\frac{1}{\sum_{i=1}^{q_{\mathcal{ S}}}\sum_{j=1}^{q_{\mathcal{ V}}}f_{i,j}(\mathcal{X}^{t})}\sum_{i=1}^{q_{\mathcal{ S}}}\sum_{j=1}^{q_{\mathcal{ V}}}(f_{i,j}(\mathcal{ X}^{t}))^{2}\\
	=&\frac{1}{\sum_{i=1}^{q_{\mathcal{ S}}}\sum_{j=1}^{q_{\mathcal{ V}}}f_{i,j}(\mathcal{ X}^{t})}\left(q_{\mathcal{ S}}q_{\mathcal{ V}}\text{Var}_{(i,j)\sim\mathbf{u}_{\mathcal{S},\mathcal{V}}}[f_{i,j}(\mathcal{ X}^{t})]+\frac{1}{q_{\mathcal{ S}}q_{\mathcal{ V}}}\left(\sum_{i=1}^{q_{\mathcal{ S}}}\sum_{j=1}^{q_{\mathcal{ V}}}f_{i,j}(\mathcal{ X}^{t})\right)^{2}\right)\\
	=&\left(1+q_{\mathcal{ S}}^{2}q_{\mathcal{ V}}^{2}\textbf{Var}_{(i,j)\sim\mathbf{u}_{\mathcal{ S},\mathcal{ V}}}\left[\frac{f_{i,j}(\mathcal{ X}^{t})}{\sum_{i=1}^{q_{\mathcal{ S}}}\sum_{j=1}^{q_{\mathcal{ V}}}f_{i,j}(\mathcal{ X}^{t})}\right]\right)\frac{1}{q_{\mathcal{ S}}q_{\mathcal{ V}}}\sum_{i=1}^{q_{\mathcal{ S}}}\sum_{j=1}^{q_{\mathcal{ V}}}f_{i,j}(\mathcal{ X}^{t})\\
	=&\left(1+q_{\mathcal{ S}}^{2}q_{\mathcal{ V}}^{2}\textbf{Var}_{(i,j)\sim\mathbf{u}_{\mathcal{ S},\mathcal{ V}}}\left[p_{i,j}^{t}\right]\right)\mathbb{E}_{(i,j)\sim\mathbf{u}_{\mathcal{ S},\mathcal{ V}}}[f_{i,j}(\mathcal{ X}^{t})]\\
	\geq&\left(1+q_{\mathcal{ S}}^{2}q_{\mathcal{ V}}^{2}\textbf{Var}_{i\sim\mathbf{u}_{\mathcal{ S}}, j\sim\mathbf{u}_{\mathcal{ V}}}\left[p_{i,j}^{t}\right]\right)\delta_{\mathbf{u}_{\mathcal{ S}}, \mathbf{u}_{\mathcal{ V}}}^{2}(\mathcal{M},\mathcal{ N},\boldsymbol{\mathcal{S}},\boldsymbol{\mathcal{V}})\|\mathcal{ X}^{t}-\mathcal{ X}^{\star}\|_{F(\mathcal{ M},\mathcal{ N})}^{2}.
\end{align*}
And then substitute it into (\ref{expectation}), we have
$$\mathbb{E}\left[\|\mathcal{ X}^{t+1}-\mathcal{ X}^{\star}\|_{F(\mathcal{ M},\mathcal{ N})}^{2}\mid\mathcal{ X}^{t}\right]\leq\rho_{\text{ATESP-PR}}\|\mathcal{ X}^{t}-\mathcal{ X}^{\star}\|_{F(\mathcal{ M},\mathcal{ N})}^{2},$$
where $\rho_{\text{ATESP-PR}}=1-(1+q_{\mathcal{ S}}^{2}q_{\mathcal{ V}}^{2}\mathbf{Var}_{i\sim \mathbf{u}_{\mathcal{ S}}, j\sim \mathbf{u}_{\mathcal{ V}}}[p_{i,j}^{t}])\delta_{\mathbf{u}_{\mathcal{ S}}, \mathbf{u}_{\mathcal{ V}}}^{2}(\mathcal{M},\mathcal{ N},\boldsymbol{\mathcal{S}},\boldsymbol{\mathcal{V}})$. Next, we shall derive a sharper bound for $\text{Var}_{i\sim\mathbf{u}_{\mathcal{ S}}, j\sim\mathbf{u}_{V}}\left[p_{i,j}^{t}\right]$. Since for any $(i,j)\in\Omega_{t}$, we have $f_{i,j}(\mathcal{X}^{t})=0$ 
which implies $p_{i,j}^{t}=0$, and hence
\begin{align*}
	\textbf{Var}_{(i,j)\sim\mathbf{u}_{\mathcal{ S},\mathcal{ V}}}\left[p_{i,j}^{t}\right]&=\frac{1}{q_{\mathcal{S}}q_{\mathcal{V}}}\sum_{i=1}^{q_{\mathcal{S}}}\sum_{j=1}^{q_{\mathcal{V}}}\left(p_{i,j}^{t}-\frac{1}{q_{\mathcal{S}}q_{\mathcal{V}}}\sum_{s=1}^{q_{\mathcal{\mathcal{S}}}}\sum_{r=1}^{q_{\mathcal{V}}}p_{s,r}^{t}\right)^{2}\\
	&=\frac{1}{q_{\mathcal{S}}q_{\mathcal{V}}}\sum_{i=1}^{q_{\mathcal{S}}}\sum_{j=1}^{q_{\mathcal{V}}}\left(p_{i,j}^{t}-\frac{1}{q_{\mathcal{S}}q_{\mathcal{V}}}\right)^{2}\\
	%&\geq \frac{1}{q_{\mathcal{A}}q_{\mathcal{B}}}\left(p_{i^{t},j^{t}}^{t+1}-\frac{1}{q_{\mathcal{A}}q_{\mathcal{B}}}\right)^{2}= \frac{1}{q_{\mathcal{A}}^{3}q_{\mathcal{B}}^{3}}
	&\geq \frac{1}{q_{\mathcal{S}}q_{\mathcal{V}}}\sum_{(i,j)\in\Omega_{t}}\left(p_{i,j}^{t}-\frac{1}{q_{\mathcal{S}}q_{\mathcal{V}}}\right)^{2}= \frac{1}{q_{\mathcal{S}}^{3}q_{\mathcal{V}}^{3}}\vert\Omega_{t}\vert.
\end{align*}
Therefore, we get
%$$\mathbf{E}\left[\|\mathcal{X}^{t+1}-\mathcal{X}^{\star}\|_{F(\mathcal{M},\mathcal{ N})}^{2}|\mathcal{X}^{t}\right]\leq\left(1-\left(1+ \frac{1}{q_{\mathcal{A}}q_{\mathcal{B}}}\right)\delta_{\mathbf{u}_{\mathcal{ A}}, \mathbf{u}_{\mathcal{ B}}}^{2}(\mathcal{M},\mathcal{ N},\boldsymbol{\mathcal{S}},\boldsymbol{\mathcal{V}})\right)\|\mathcal{X}^{t}-\mathcal{X}^{\star}\|_{F(\mathcal{M},\mathcal{ N})}^{2},$$
$$\mathbf{E}\left[\|\mathcal{X}^{t+1}-\mathcal{X}^{\star}\|_{F(\mathcal{M},\mathcal{ N})}^{2}\mid\mathcal{X}^{t}\right]\leq\rho_{t}\|\mathcal{X}^{t}-\mathcal{X}^{\star}\|_{F(\mathcal{M},\mathcal{ N})}^{2}.$$
where $\rho_{t}=1-\left(1+ \frac{\vert\Omega_{t}\vert}{q_{\mathcal{S}}q_{\mathcal{V}}}\right)\delta_{\mathbf{u}_{\mathcal{ S}}, \mathbf{u}_{\mathcal{ V}}}^{2}(\mathcal{M},\mathcal{ N},\boldsymbol{\mathcal{S}},\boldsymbol{\mathcal{V}})$. Taking the  full  expectation and unrolling the recurrence give this theorem.
\end{proof}

\begin{proof}[Proof of Theorem~{\upshape\ref{thmATESPCS}}]\rm
Due to $$\mathop{\max}\limits_{i\in[q_{\mathcal{S}}], j\in[q_{\mathcal{V}}]}f_{i,j}(\mathcal{X}^{k})\geq\mathbb{E}_{i\sim \mathbf{p}_{\mathcal{S}},j\sim \mathbf{p_{\mathcal{V}}}}[f_{i,j}(\mathcal{X}^{t})],$$
we know that $\mathfrak{W}_{t}$ is not empty and $\mathop{\max}\limits_{i\in[q_{\mathcal{S}}], j\in[q_{\mathcal{V}}]}f_{i,j}(\mathcal{X}^{t})\in\mathfrak{W}_{t}$. From the deﬁnition of $\mathbf{p}_{\mathcal{ S},\mathcal{ V}}^{t}$, we have $p_{i,j}^{t}=0$ for all $(i,j)\notin \mathfrak{W}_{t}$, and thus
\begin{align*}
	\mathbb{E}[\|\mathcal{X}^{t+1}-\mathcal{X}^{\star}\|_{F(\mathcal{M},\mathcal{ N})}^{2}\mid\mathcal{X}^{t}]&=\|\mathcal{X}^{t}-\mathcal{X}^{\star}\|_{F(\mathcal{M},\mathcal{ N})}^{2}-\mathbb{E}_{(i,j)\sim \mathbf{p}_{\mathcal{ S},\mathcal{ V}}^{t}}[f_{i,j}(\mathcal{X}^{t})]\\
	&=\|\mathcal{X}^{t}-\mathcal{X}^{\star}\|_{F(\mathcal{M},\mathcal{ N})}^{2}-\sum_{(i,j)\in\mathfrak{W}_{t}}p_{i,j}^{t}f_{i,j}(\mathcal{X}^{t}).
\end{align*}
Note that
\begin{align*}	&\sum_{(i,j)\in\mathfrak{W}_{t}}p_{i,j}^{t}f_{i,j}(\mathcal{X}^{t})\\
\geq&\sum_{(i,j)\in\mathfrak{W}_{t}}\left(\theta\mathop{\max}\limits_{i\in[q_{\mathcal{S}}], j\in[q_{\mathcal{V}}]}f_{i,j}(\mathcal{X}^{t})+(1-\theta)\mathbb{E}_{i\sim \mathbf{p}_{\mathcal{S}},j\sim \mathbf{p}_{\mathcal{V}}}[f_{i,j}(\mathcal{X}^{t})]\right)p_{i,j}^{t}\\
=&\theta\mathop{\max}\limits_{i\in[q_{\mathcal{S}}], j\in[q_{\mathcal{V}}]}f_{i,j}(\mathcal{X}^{t})+(1-\theta)\mathbb{E}_{i\sim \mathbf{p}_{\mathcal{S}},j\sim \mathbf{p}_{\mathcal{V}}}[f_{i,j}(\mathcal{X}^{t})]\\
\geq&\left(\theta\delta_{\infty}^{2}(\mathcal{M},\mathcal{ N},\boldsymbol{\mathcal{S}},\boldsymbol{\mathcal{V}})+(1-\theta)\delta_{\mathbf{p}_{\mathcal{S}}, \mathbf{p}_{\mathcal{V}}}^{2}(\mathcal{M},\mathcal{ N},\boldsymbol{\mathcal{S}},\boldsymbol{\mathcal{V}})\right)\|\mathcal{X}^{t}-\mathcal{X}^{\star}\|_{F(\mathcal{M},\mathcal{ N})}^{2}.
\end{align*}
Hence,
\begin{align*}
	\mathbb{E}[\|\mathcal{X}^{t+1}-\mathcal{X}^{\star}\|_{F(\mathcal{M},\mathcal{ N})}^{2}\mid\mathcal{X}^{t}]\leq\rho_{\text{ATESP-CS}}\|\mathcal{X}^{t}-\mathcal{X}^{\star}\|_{F(\mathcal{M},\mathcal{ N})}^{2}.
\end{align*}
where $\rho_{\text{ATESP-CS}}=1-\theta\delta_{\infty}^{2}(\mathcal{\mathcal{M}},\mathcal{\mathcal{N}},\boldsymbol{\mathcal{S}},\boldsymbol{\mathcal{V}})-(1-\theta)\delta_{\mathbf{p}_{\mathcal{S}}, \mathbf{p}_{\mathcal{V}}}^{2}(\mathcal{\mathcal{M}},\mathcal{ N},\boldsymbol{\mathcal{S}},\boldsymbol{\mathcal{V}})$. Taking the  full  expectation and unrolling the recurrence give this theorem.
\end{proof}

\begin{proof}[Proof of Theorem~{\upshape\ref{thmTESPF}}]\rm
From the assumption that $\mathbb{E}[\text{ bdiag}(\widehat{\mathcal{Z}})]$ and $\mathbb{E}[\text{ bdiag}(\widehat{\mathcal{W}})]$ are Hermitian positive definite with probability $1$, we have that $\mathbb{E}[\mathcal{Z}]$ and $\mathbb{E}[\mathcal{W}]$ are T-symmetric T-positive definite with probability $1$.
Moreover,
\begin{align*}
	\lambda_{\min}(\mathbb{E}[\text{bcirc}(\mathcal{ W}\otimes_{t}\mathcal{ Z})])
	&=\mathop{\min}_{k\in[l]}\lambda_{\min}(\mathbb{E}[\widehat{\mathcal{W}}_{(k)}\otimes\widehat{\mathcal{Z}}_{(k)}])\\
	&=\mathop{\min}_{k\in[l]}\lambda_{\min}(\mathbb{E}[(\widehat{\mathcal{W}}_{(k)})\lambda_{\min}(\mathbb{E}[\widehat{\mathcal{Z}}_{(k)}]).
	%&\geq\mathop{\min}_{k\in[l]}\lambda_{\min}(\mathbb{E}[(\widehat{\mathcal{W}}_{(k)})])\mathop{\min}_{k\in[l]}\lambda_{\min}(\mathbb{E}[\widehat{\mathcal{Z}}_{(k)}]).
\end{align*}
Then, we conclude that
\begin{align*}
	&\mathbb{E}\left[\|\mathcal{X}^{t+1}-\mathcal{X}^{\star}\|_{F(\mathcal{M},\mathcal{ N})}^{2}\mid\mathcal{X}^{t}\right]\leq(1-	\lambda_{\min}(\mathbb{E}[\text{bcirc}(\mathcal{ W}\otimes_{t}\mathcal{ Z})]))\|\mathcal{X}^{t}-\mathcal{X}^{\star}\|_{F(\mathcal{M},\mathcal{ N})}^{2}\\
	=&\left(1-\mathop{\min}_{k\in[l]}\lambda_{\min}(\mathbb{E}[(\widehat{\mathcal{W}}_{(k)})])\lambda_{\min}(\mathbb{E}[\widehat{\mathcal{Z}}_{(k)}])\right)\|\mathcal{X}^{t}-\mathcal{X}^{\star}\|_{F(\mathcal{M},\mathcal{ N})}^{2}.
%	&\leq\left(1-\mathop{\min}_{k\in[l]}\lambda_{\min}(\mathbb{E}[(\widehat{\mathcal{W}}_{(k)})])\mathop{\min}_{k\in[l]}\lambda_{\min}(\mathbb{E}[\widehat{\mathcal{Z}}_{(k)}])\right)\|\mathcal{X}^{t}-\mathcal{X}^{\star}\|_{F(\mathcal{M},\mathcal{ N})}^{2}.
\end{align*}
Taking the  full  expectation and unrolling the recurrence give this theorem.
\end{proof}

\section{Fast version of the ATESP-PR method}\label{app2}
\begin{breakablealgorithm}
\caption{Fast ATESP-PR method in Fourier domain}\label{FATESPF}
\begin{algorithmic}[1]
 \State \textbf{Input:} $\mathcal{X}^{0}\in \mathbb{K}^{r\times s}_{l}$, $\mathcal{A}\in \mathbb{K}^{m\times r }_{l}$, $\mathcal{B}\in \mathbb{K}^{s\times n}_{l}$, and $\mathcal{C}\in \mathbb{K}^{m\times n }_{l}$
 \State \textbf{Parameters:} two finite sets of sketching tubal matrices $\boldsymbol{\mathcal{S}}=[\mathcal{S}_{1},\cdots,\mathcal{S}_{q_{\mathcal{ S}}}]$ and $\boldsymbol{\mathcal{V}}=[\mathcal{V}_{1},\cdots,\mathcal{V}_{q_{\mathcal{ V}}}]$, T-symmetric T-positive definite tubal matrix $\mathcal{M}\in \mathbb{K}^{r\times r}_{l}$ and $\mathcal{N}\in \mathbb{K}^{s\times s}_{l}$
		\State $\widehat{\mathcal{X}}^{0}= \texttt{fft}(\mathcal{X}^{0},[~],3)$, $\widehat{\mathcal{A}}=
		\texttt{fft}(\mathcal{A},[~],3)$, $\widehat{\mathcal{B}}=\texttt{fft}(\mathcal{B},[~],3)$, $\widehat{\mathcal{C}}=\texttt{fft}(\mathcal{C},[~],3)$,  $\widehat{\mathcal{M}}=\texttt{fft}(\mathcal{M},[~],3)$, $\widehat{\mathcal{N}}= \texttt{fft}(\mathcal{N},[~],3)$, $\widehat{\mathcal{S}_{i}}= \texttt{fft}(\mathcal{S}_{i},[~],3)$ for $i=1,\cdots,q_{\mathcal{ S}}$, $\widehat{\mathcal{V}_{i}}= \texttt{fft}(\mathcal{V}_{i},[~],3)$ for $i=1,\cdots,q_{\mathcal{ V}}$ % 
		\For{$k=1,\cdots,\lceil\frac{l+1}{2}\rceil$} 
		\State Compute $(\widehat{\mathcal{C}}_{i})_{(k)}=\text{Cholesky}\left(\left((\widehat{\mathcal{S}}_{i})_{(k)}^{H}\widehat{\mathcal{A}}_{(k)}\widehat{\mathcal{Q}}^{-1}_{(k)}\widehat{\mathcal{A}}_{(k)}^{H}(\widehat{\mathcal{S}_{i}})_{(k)}\right)^{\dag}\right)$  and $(\widehat{\mathcal{D}}_{j})_{(k)}=\text{Cholesky}\left(\left((\widehat{\mathcal{V}}_{j})_{(k)}^{H}\widehat{\mathcal{B}}_{(k)}^{H}\widehat{\mathcal{R}}^{-1}_{(k)}\widehat{\mathcal{B}}_{(k)}(\widehat{\mathcal{V}_{j}})_{(k)}\right)^{\dag}\right)$ for $i=1,\cdots,q_{\mathcal{ S}}$ and  $j=1,\cdots,q_{\mathcal{ V}}$
		\State Compute
$\widehat{\mathcal{M}}^{-1}_{(k)}\widehat{\mathcal{A}}_{(k)}^{H}(\widehat{\mathcal{S}}_{i})_{(k)}(\widehat{\mathcal{C}}_{i})_{(k)}$ and $(\widehat{\mathcal{D}}_{j})^{H}_{(k)}(\widehat{\mathcal{V}}_{j})_{(k)}^{H}\widehat{\mathcal{B}}^{H}_{(k)}\widehat{\mathcal{N}}^{-1}_{(k)}$ for $i=1,\cdots,q_{\mathcal{ S}}$ and $j=1,\cdots,q_{\mathcal{ V}}$
		\State Compute $(\widehat{\mathcal{C}}_{i})_{(k)}^{H}(\widehat{\mathcal{S}_{i}})_{(k)}^{H}\widehat{\mathcal{A}}_{(k)}\widehat{\mathcal{M}}^{-1}_{(k)}\widehat{\mathcal{A}}_{(k)}^{H}(\widehat{\mathcal{S}}_{v})_{(k)}(\widehat{\mathcal{C}}_{v})_{(k)}$ for $i,v=1,\cdots,q_{\mathcal{ S}}$ and $(\widehat{\mathcal{D}}_{j})_{(k)}^{H}(\widehat{\mathcal{V}_{j}})_{(k)}^{H}\widehat{\mathcal{B}}_{(k)}^{H}\widehat{\mathcal{N}}^{-1}_{(k)}\widehat{\mathcal{B}}_{(k)}(\widehat{\mathcal{V}}_{w})_{(k)}(\widehat{\mathcal{D}}_{w})_{(k)}$ for $j,w=1,\cdots,q_{\mathcal{ V}}$
		\State Initialize $(\widehat{\mathcal{R}}_{i,j}^{0})_{(k)}=(\widehat{\mathcal{C}}_{i})_{(k)}^{H}(\widehat{\mathcal{S}_{i}})_{(k)}^{H}\left(\widehat{\mathcal{A}}_{(k)}\widehat{\mathcal{X}}_{(k)}^{0}\widehat{\mathcal{B}}_{(k)}-\widehat{\mathcal{C}}_{(k)}\right)(\widehat{\mathcal{V}_{j}})_{(k)}(\widehat{\mathcal{D}}_{j})_{(k)}$ for $i=1,\cdots,q_{\mathcal{S}}$ and $j=1,\cdots,q_{\mathcal{ V}}$
		\EndFor
		\For{$k=\lceil\frac{l+1}{2}\rceil+1,\cdots,l$}
		\State $(\widehat{\mathcal{R}}_{i,j}^{0})_{(k)}=\text{conj}((\widehat{\mathcal{R}}_{i,j}^{0})_{(l-k+2)})$ for $i=1,\cdots,q_{\mathcal{S}}$ and $j=1,\cdots,q_{\mathcal{V}}$
		\EndFor
		\For{$t=0,1,\cdots$}
		\State $f_{i,j}(\mathcal{X}^{t})=(1/l)\sum_{k=1}^{l}\|(\widehat{\mathcal{R}}_{i,j}^{t})_{(k)}\|_{F}^{2}$ for $i=1,\cdots,q_{\mathcal{ S}}$ and $j=1,\cdots,q_{\mathcal{ V}}$
		\State Sample $(i^{t},j^{t}) \sim \mathbf{p}_{\mathcal{ S},\mathcal{ V}}^{t}$, where $p_{i,j}^{t}=f_{i,j}(\mathcal{X}^{t})/(\sum_{i=1}^{q_{\mathcal{ S}}}\sum_{j=1}^{q_{\mathcal{ V}}}f_{i,j}(\mathcal{X}^{t}))$ for $i=1,\cdots,q_{\mathcal{ S}}$ and $j=1,\cdots,q_{\mathcal{ V}}$
		\For{$k=1,\cdots,\lceil\frac{l+1}{2}\rceil$}	
\State Update
  \begin{align*}
\widehat{\mathcal{X}}^{t+1}_{(k)}&=\widehat{\mathcal{X}}^{t}_{(k)}-\left(\widehat{\mathcal{M}}^{-1}_{(k)}\widehat{\mathcal{A}}_{(k)}^{H}(\widehat{\mathcal{S}}_{i^{t}})_{(k)}(\widehat{\mathcal{C}}_{i^{t}})_{(k)}\right)(\widehat{\mathcal{R}}_{i^{t},j^{t}}^{t})_{(k)}\\
&\quad\left((\widehat{\mathcal{D}}_{j^{t}})^{H}_{(k)}(\widehat{\mathcal{V}}_{j^{t}})_{(k)}^{H}\widehat{\mathcal{B}}^{H}_{(k)}\widehat{\mathcal{N}}^{-1}_{(k)}\right)
  \end{align*}
\State Update 
\begin{align*}
(\widehat{\mathcal{R}}_{i,j}^{t+1})_{(k)}&=(\widehat{\mathcal{R}}_{i,j}^{t})_{(k)}-\left((\widehat{\mathcal{C}}_{i})_{(k)}^{H}(\widehat{\mathcal{S}_{i}})_{(k)}^{H}\widehat{\mathcal{A}}_{(k)}\widehat{\mathcal{M}}^{-1}_{(k)}\widehat{\mathcal{A}}_{(k)}^{H}(\widehat{\mathcal{S}}_{i^{t}})_{(k)}(\widehat{\mathcal{C}}_{i^{t}})_{(k)}\right)\\
&\quad(\widehat{\mathcal{R}}_{i^{t},j^{t}}^{t})_{(k)}	\left((\widehat{\mathcal{D}}_{j^{t}})_{(k)}^{H}(\widehat{\mathcal{V}_{j^{t}}})_{(k)}^{H}\widehat{\mathcal{B}}_{(k)}^{H}\widehat{\mathcal{N}}^{-1}_{(k)}\widehat{\mathcal{B}}_{(k)}(\widehat{\mathcal{V}}_{j})_{(k)}(\widehat{\mathcal{D}}_{j})_{(k)}\right)
 \end{align*}
for $i=1,\cdots,q_{\mathcal{ S}}$ and $j=1,\cdots,q_{\mathcal{ V}}$
		\EndFor
		\For{$k=\lceil\frac{l+1}{2}\rceil+1,\cdots,l$}
		\State $\widehat{\mathcal{X}}^{t+1}_{(k)}=\text{conj}(\widehat{\mathcal{X}}^{t+1}_{(l-k+2)})$		
		\State $(\widehat{\mathcal{R}}_{i,j}^{t+1})_{(k)}=\text{conj}((\widehat{\mathcal{R}}_{i,j}^{t+1})_{(l-k+2)})$ for $i=1,\cdots,q_{\mathcal{ S}}$ and $j=1,\cdots,q_{\mathcal{ V}}$
		\EndFor
		\EndFor
		\State $\mathcal{X}^{t+1}= \texttt{ifft}\left(\widehat{\mathcal{X}}^{t+1},[~],3\right)$
  \State \textbf{Output:} last iterate $\mathcal{X}^{t+1}$
	\end{algorithmic}
\end{breakablealgorithm}

%%=============================================%%
%% For submissions to Nature Portfolio Journals %%
%% please use the heading ``Extended Data''.   %%
%%=============================================%%

%%=============================================================%%
%% Sample for another appendix section			       %%
%%=============================================================%%

%% \section{Example of another appendix section}\label{secA2}%
%% Appendices may be used for helpful, supporting or essential material that would otherwise 
%% clutter, break up or be distracting to the text. Appendices can consist of sections, figures, 
%% tables and equations etc.

\end{appendices}

%%===========================================================================================%%
%% If you are submitting to one of the Nature Portfolio journals, using the eJP submission   %%
%% system, please include the references within the manuscript file itself. You may do this  %%
%% by copying the reference list from your .bbl file, paste it into the main manuscript .tex %%
%% file, and delete the associated \verb+\bibliography+ commands.                            %%
%%===========================================================================================%%

\bibliography{sn-bibliography}% common bib file

%% BioMed_Central_Bib_Style_v1.01

\begin{thebibliography}{31}
% BibTex style file: bmc-mathphys.bst (version 2.1), 2014-07-24
\ifx \bisbn   \undefined \def \bisbn  #1{ISBN #1}\fi
\ifx \binits  \undefined \def \binits#1{#1}\fi
\ifx \bauthor  \undefined \def \bauthor#1{#1}\fi
\ifx \batitle  \undefined \def \batitle#1{#1}\fi
\ifx \bjtitle  \undefined \def \bjtitle#1{#1}\fi
\ifx \bvolume  \undefined \def \bvolume#1{\textbf{#1}}\fi
\ifx \byear  \undefined \def \byear#1{#1}\fi
\ifx \bissue  \undefined \def \bissue#1{#1}\fi
\ifx \bfpage  \undefined \def \bfpage#1{#1}\fi
\ifx \blpage  \undefined \def \blpage #1{#1}\fi
\ifx \burl  \undefined \def \burl#1{\textsf{#1}}\fi
\ifx \doiurl  \undefined \def \doiurl#1{\url{https://doi.org/#1}}\fi
\ifx \betal  \undefined \def \betal{\textit{et al.}}\fi
\ifx \binstitute  \undefined \def \binstitute#1{#1}\fi
\ifx \binstitutionaled  \undefined \def \binstitutionaled#1{#1}\fi
\ifx \bctitle  \undefined \def \bctitle#1{#1}\fi
\ifx \beditor  \undefined \def \beditor#1{#1}\fi
\ifx \bpublisher  \undefined \def \bpublisher#1{#1}\fi
\ifx \bbtitle  \undefined \def \bbtitle#1{#1}\fi
\ifx \bedition  \undefined \def \bedition#1{#1}\fi
\ifx \bseriesno  \undefined \def \bseriesno#1{#1}\fi
\ifx \blocation  \undefined \def \blocation#1{#1}\fi
\ifx \bsertitle  \undefined \def \bsertitle#1{#1}\fi
\ifx \bsnm \undefined \def \bsnm#1{#1}\fi
\ifx \bsuffix \undefined \def \bsuffix#1{#1}\fi
\ifx \bparticle \undefined \def \bparticle#1{#1}\fi
\ifx \barticle \undefined \def \barticle#1{#1}\fi
\bibcommenthead
\ifx \bconfdate \undefined \def \bconfdate #1{#1}\fi
\ifx \botherref \undefined \def \botherref #1{#1}\fi
\ifx \url \undefined \def \url#1{\textsf{#1}}\fi
\ifx \bchapter \undefined \def \bchapter#1{#1}\fi
\ifx \bbook \undefined \def \bbook#1{#1}\fi
\ifx \bcomment \undefined \def \bcomment#1{#1}\fi
\ifx \oauthor \undefined \def \oauthor#1{#1}\fi
\ifx \citeauthoryear \undefined \def \citeauthoryear#1{#1}\fi
\ifx \endbibitem  \undefined \def \endbibitem {}\fi
\ifx \bconflocation  \undefined \def \bconflocation#1{#1}\fi
\ifx \arxivurl  \undefined \def \arxivurl#1{\textsf{#1}}\fi
\csname PreBibitemsHook\endcsname

%%% 1
\bibitem{kilmer2011factorization}
\begin{barticle}
\bauthor{\bsnm{Kilmer}, \binits{M.E.}},
\bauthor{\bsnm{Martin}, \binits{C.D.}}:
\batitle{Factorization strategies for third-order tensors}.
\bjtitle{Linear Algebra Appl.}
\bvolume{435}(\bissue{3}),
\bfpage{641}--\blpage{658}
(\byear{2011})
\end{barticle}
\endbibitem

%%% 2
\bibitem{kilmer2013third}
\begin{barticle}
\bauthor{\bsnm{Kilmer}, \binits{M.E.}},
\bauthor{\bsnm{Braman}, \binits{K.}},
\bauthor{\bsnm{Hao}, \binits{N.}},
\bauthor{\bsnm{Hoover}, \binits{R.C.}}:
\batitle{Third-order tensors as operators on matrices: A theoretical and
  computational framework with applications in imaging}.
\bjtitle{SIAM J. Matrix Anal. Appl.}
\bvolume{34}(\bissue{1}),
\bfpage{148}--\blpage{172}
(\byear{2013})
\end{barticle}
\endbibitem

%%% 3
\bibitem{soltani2016tensor}
\begin{barticle}
\bauthor{\bsnm{Soltani}, \binits{S.}},
\bauthor{\bsnm{Kilmer}, \binits{M.E.}},
\bauthor{\bsnm{Hansen}, \binits{P.C.}}:
\batitle{A tensor-based dictionary learning approach to tomographic image
  reconstruction}.
\bjtitle{BIT}
\bvolume{56}(\bissue{4}),
\bfpage{1425}--\blpage{1454}
(\byear{2016})
\end{barticle}
\endbibitem

%%% 4
\bibitem{tarzanagh2018fast}
\begin{barticle}
\bauthor{\bsnm{Tarzanagh}, \binits{D.A.}},
\bauthor{\bsnm{Michailidis}, \binits{G.}}:
\batitle{Fast randomized algorithms for {T}-product based tensor operations and
  decompositions with applications to imaging data}.
\bjtitle{SIAM J. Imaging Sci.}
\bvolume{11}(\bissue{4}),
\bfpage{2629}--\blpage{2664}
(\byear{2018})
\end{barticle}
\endbibitem

%%% 5
\bibitem{xie2018unifying}
\begin{barticle}
\bauthor{\bsnm{Xie}, \binits{Y.}},
\bauthor{\bsnm{Tao}, \binits{D.C.}},
\bauthor{\bsnm{Zhang}, \binits{W.S.}},
\bauthor{\bsnm{Liu}, \binits{Y.}},
\bauthor{\bsnm{Zhang}, \binits{L.}},
\bauthor{\bsnm{Qu}, \binits{Y.Y.}}:
\batitle{On unifying multi-view self-representations for clustering by tensor
  multi-rank minimization}.
\bjtitle{Int. J. Comput. Vis.}
\bvolume{126}(\bissue{11}),
\bfpage{1157}--\blpage{1179}
(\byear{2018})
\end{barticle}
\endbibitem

%%% 6
\bibitem{yin2018multiview}
\begin{barticle}
\bauthor{\bsnm{Yin}, \binits{M.}},
\bauthor{\bsnm{Gao}, \binits{J.B.}},
\bauthor{\bsnm{Xie}, \binits{S.L.}},
\bauthor{\bsnm{Guo}, \binits{Y.}}:
\batitle{Multiview subspace clustering via tensorial {T}-product
  representation}.
\bjtitle{IEEE Trans. Neural Netw. Learn. Syst.}
\bvolume{30}(\bissue{3}),
\bfpage{851}--\blpage{864}
(\byear{2018})
\end{barticle}
\endbibitem

%%% 7
\bibitem{zhang2018nonlocal}
\begin{barticle}
\bauthor{\bsnm{Zhang}, \binits{C.Y.}},
\bauthor{\bsnm{Hu}, \binits{W.R.}},
\bauthor{\bsnm{Jin}, \binits{T.Y.}},
\bauthor{\bsnm{Mei}, \binits{Z.L.}}:
\batitle{Nonlocal image denoising via adaptive tensor nuclear norm
  minimization}.
\bjtitle{Neural Comput. Appl.}
\bvolume{29}(\bissue{1}),
\bfpage{3}--\blpage{19}
(\byear{2018})
\end{barticle}
\endbibitem

%%% 8
\bibitem{semerci2014tensor}
\begin{barticle}
\bauthor{\bsnm{Semerci}, \binits{O.}},
\bauthor{\bsnm{Hao}, \binits{N.}},
\bauthor{\bsnm{Kilmer}, \binits{M.E.}},
\bauthor{\bsnm{Miller}, \binits{E.L.}}:
\batitle{Tensor-based formulation and nuclear norm regularization for
  multienergy computed tomography}.
\bjtitle{IEEE Trans. Image Process}
\bvolume{23}(\bissue{4}),
\bfpage{1678}--\blpage{1693}
(\byear{2014})
\end{barticle}
\endbibitem

%%% 9
\bibitem{zhang2014novel}
\begin{botherref}
\oauthor{\bsnm{Zhang}, \binits{Z.M.}},
\oauthor{\bsnm{Ely}, \binits{G.}},
\oauthor{\bsnm{Aeron}, \binits{S.}},
\oauthor{\bsnm{Hao}, \binits{N.}},
\oauthor{\bsnm{Kilmer}, \binits{M.}}:
Novel methods for multilinear data completion and de-noising based on
  tensor-{SVD}.
Paper presented at 2014 {IEEE} conference on computer vision and pattern
  recognition, {IEEE}, Columbus, OH, 3842--3849 June 2014
(2014)
\end{botherref}
\endbibitem

%%% 10
\bibitem{zhang2016exact}
\begin{barticle}
\bauthor{\bsnm{Zhang}, \binits{Z.M.}},
\bauthor{\bsnm{Aeron}, \binits{S.}}:
\batitle{Exact tensor completion using t-{SVD}}.
\bjtitle{IEEE Trans. Signal Process.}
\bvolume{65}(\bissue{6}),
\bfpage{1511}--\blpage{1526}
(\byear{2016})
\end{barticle}
\endbibitem

%%% 11
\bibitem{zhou2017tensor}
\begin{barticle}
\bauthor{\bsnm{Zhou}, \binits{P.}},
\bauthor{\bsnm{Lu}, \binits{C.Y.}},
\bauthor{\bsnm{Lin}, \binits{Z.C.}},
\bauthor{\bsnm{Zhang}, \binits{C.}}:
\batitle{Tensor factorization for low-rank tensor completion}.
\bjtitle{IEEE Trans. Image Process.}
\bvolume{27}(\bissue{3}),
\bfpage{1152}--\blpage{1163}
(\byear{2017})
\end{barticle}
\endbibitem

%%% 12
\bibitem{liu2018improved}
\begin{barticle}
\bauthor{\bsnm{Liu}, \binits{Y.}},
\bauthor{\bsnm{Chen}, \binits{L.X.}},
\bauthor{\bsnm{Zhu}, \binits{C.}}:
\batitle{Improved robust tensor principal component analysis via low-rank core
  matrix}.
\bjtitle{IEEE J. of Sel. Top. Signal Process.}
\bvolume{12}(\bissue{6}),
\bfpage{1378}--\blpage{1389}
(\byear{2018})
\end{barticle}
\endbibitem

%%% 13
\bibitem{braman2010third}
\begin{barticle}
\bauthor{\bsnm{Braman}, \binits{K.}}:
\batitle{Third-order tensors as linear operators on a space of matrices}.
\bjtitle{Linear Algebra Appl.}
\bvolume{433}(\bissue{7}),
\bfpage{1241}--\blpage{1253}
(\byear{2010})
\end{barticle}
\endbibitem

%%% 14
\bibitem{jin2017generalized}
\begin{barticle}
\bauthor{\bsnm{Jin}, \binits{H.W.}},
\bauthor{\bsnm{Bai}, \binits{M.R.}},
\bauthor{\bsnm{Ben{\'\i}tez}, \binits{J.}},
\bauthor{\bsnm{Liu}, \binits{X.J.}}:
\batitle{The generalized inverses of tensors and an application to linear
  models}.
\bjtitle{Comput. Math. Appl.}
\bvolume{74}(\bissue{3}),
\bfpage{385}--\blpage{397}
(\byear{2017})
\end{barticle}
\endbibitem

%%% 15
\bibitem{lund2020tensor}
\begin{barticle}
\bauthor{\bsnm{Lund}, \binits{K.}}:
\batitle{The tensor {T}-function: A definition for functions of third-order
  tensors}.
\bjtitle{Numer. Linear Algebra Appl.}
\bvolume{27}(\bissue{3}),
\bfpage{2288}
(\byear{2020})
\end{barticle}
\endbibitem

%%% 16
\bibitem{miao2020generalized}
\begin{barticle}
\bauthor{\bsnm{Miao}, \binits{Y.}},
\bauthor{\bsnm{Qi}, \binits{L.Q.}},
\bauthor{\bsnm{Wei}, \binits{Y.M.}}:
\batitle{Generalized tensor function via the tensor singular value
  decomposition based on the {T}-product}.
\bjtitle{Linear Algebra Appl.}
\bvolume{590},
\bfpage{258}--\blpage{303}
(\byear{2020})
\end{barticle}
\endbibitem

%%% 17
\bibitem{miao2021t}
\begin{barticle}
\bauthor{\bsnm{Miao}, \binits{Y.}},
\bauthor{\bsnm{Qi}, \binits{L.Q.}},
\bauthor{\bsnm{Wei}, \binits{Y.M.}}:
\batitle{T-{J}ordan canonical form and {T}-{D}razin inverse based on the
  {T}-product}.
\bjtitle{Commun. Appl. Math. Comput.}
\bvolume{3}(\bissue{2}),
\bfpage{201}--\blpage{220}
(\byear{2021})
\end{barticle}
\endbibitem

%%% 18
\bibitem{zheng2021t}
\begin{barticle}
\bauthor{\bsnm{Zheng}, \binits{M.M.}},
\bauthor{\bsnm{Huang}, \binits{Z.H.}},
\bauthor{\bsnm{Wang}, \binits{Y.}}:
\batitle{T-positive semidefiniteness of third-order symmetric tensors and
  {T}-semidefinite programming}.
\bjtitle{Comput. Optim. Appl.}
\bvolume{78}(\bissue{1}),
\bfpage{239}--\blpage{272}
(\byear{2021})
\end{barticle}
\endbibitem

%%% 19
\bibitem{qi2021t}
\begin{botherref}
\oauthor{\bsnm{Qi}, \binits{L.Q.}},
\oauthor{\bsnm{Zhang}, \binits{X.Z.}}:
T-Quadratic Forms and Spectral Analysis of {T}-Symmetric Tensors.
Preprint at \url{https://arxiv.org/abs/2101.10820}
(2021)
\end{botherref}
\endbibitem

%%% 20
\bibitem{el2021tensor}
\begin{barticle}
\bauthor{\bsnm{{El Guide}}, \binits{M.}},
\bauthor{\bsnm{{El Ichi}}, \binits{A.}},
\bauthor{\bsnm{Jbilou}, \binits{K.}},
\bauthor{\bsnm{Sadaka}, \binits{R.}}:
\batitle{On tensor {GMRES} and {Golub--Kahan} methods via the {T}-product for
  color image processing}.
\bjtitle{Electron. J. Linear Algebra}
\bvolume{37},
\bfpage{524}--\blpage{543}
(\byear{2021})
\end{barticle}
\endbibitem

%%% 21
\bibitem{wu2022kaczmarz}
\begin{barticle}
\bauthor{\bsnm{Wu}, \binits{N.C.}},
\bauthor{\bsnm{Liu}, \binits{C.Z.}},
\bauthor{\bsnm{Zuo}, \binits{Q.}}:
\batitle{On the {K}aczmarz methods based on relaxed greedy selection for
  solving matrix equation {AXB = C}}.
\bjtitle{J. Comput. Appl. Math.}
\bvolume{413},
\bfpage{114374}
(\byear{2022})
\end{barticle}
\endbibitem

%%% 22
\bibitem{niu2022global}
\begin{botherref}
\oauthor{\bsnm{Niu}, \binits{Y.Q.}},
\oauthor{\bsnm{Zheng}, \binits{B.}}:
On global randomized block Kaczmarz algorithm for solving large-scale matrix
  equations.
Preprint at \url{https://arxiv.org/abs/2204.13920}
(2022)
\end{botherref}
\endbibitem

%%% 23
\bibitem{du2022convergence}
\begin{barticle}
\bauthor{\bsnm{Du}, \binits{K.}},
\bauthor{\bsnm{Ruan}, \binits{C.C.}},
\bauthor{\bsnm{Sun}, \binits{X.H.}}:
\batitle{On the convergence of a randomized block coordinate descent algorithm
  for a matrix least squares problem}.
\bjtitle{Appl. Math. Lett.}
\bvolume{124},
\bfpage{107689}
(\byear{2022})
\end{barticle}
\endbibitem

%%% 24
\bibitem{gower2015randomized}
\begin{barticle}
\bauthor{\bsnm{Gower}, \binits{R.M.}},
\bauthor{\bsnm{Richt{\'a}rik}, \binits{P.}}:
\batitle{Randomized iterative methods for linear systems}.
\bjtitle{SIAM J. Matrix Anal. Appl.}
\bvolume{36}(\bissue{4}),
\bfpage{1660}--\blpage{1690}
(\byear{2015})
\end{barticle}
\endbibitem

%%% 25
\bibitem{gower2019adaptive}
\begin{barticle}
\bauthor{\bsnm{Gower}, \binits{R.M.}},
\bauthor{\bsnm{Molitor}, \binits{D.}},
\bauthor{\bsnm{Moorman}, \binits{J.}},
\bauthor{\bsnm{Needell}, \binits{D.}}:
\batitle{On adaptive sketch--and--project for solving linear systems}.
\bjtitle{SIAM J. Matrix Anal. Appl.}
\bvolume{42}(\bissue{2}),
\bfpage{954}--\blpage{989}
(\byear{2021})
\end{barticle}
\endbibitem

%%% 26
\bibitem{tang2022sketch}
\begin{botherref}
\oauthor{\bsnm{Tang}, \binits{L.}},
\oauthor{\bsnm{Yu}, \binits{Y.J.}},
\oauthor{\bsnm{Zhang}, \binits{Y.J.}},
\oauthor{\bsnm{Li}, \binits{H.Y.}}:
Sketch-and-project methods for tensor linear systems.
Numer. Linear Algebra Appl.,
2470
(2022)
\end{botherref}
\endbibitem

%%% 27
\bibitem{lu2019tensor}
\begin{barticle}
\bauthor{\bsnm{Lu}, \binits{C.Y.}},
\bauthor{\bsnm{Feng}, \binits{J.S.}},
\bauthor{\bsnm{Chen}, \binits{Y.D.}},
\bauthor{\bsnm{Liu}, \binits{W.}},
\bauthor{\bsnm{Lin}, \binits{Z.C.}},
\bauthor{\bsnm{Yan}, \binits{S.C.}}:
\batitle{Tensor robust principal component analysis with a new tensor nuclear
  norm}.
\bjtitle{IEEE Trans. Pattern Anal. Mach. Intell.}
\bvolume{42}(\bissue{4}),
\bfpage{925}--\blpage{938}
(\byear{2019})
\end{barticle}
\endbibitem

%%% 28
\bibitem{zhang2018randomizedddd}
\begin{barticle}
\bauthor{\bsnm{Zhang}, \binits{J.N.}},
\bauthor{\bsnm{Saibaba}, \binits{A.K.}},
\bauthor{\bsnm{Kilmer}, \binits{M.E.}},
\bauthor{\bsnm{Aeron}, \binits{S.}}:
\batitle{A randomized tensor singular value decomposition based on the
  {T}-product}.
\bjtitle{Numer. Linear Algebra Appl.}
\bvolume{25}(\bissue{5}),
\bfpage{2179}
(\byear{2018})
\end{barticle}
\endbibitem

%%% 29
\bibitem{ma2021randomized}
\begin{barticle}
\bauthor{\bsnm{Ma}, \binits{A.}},
\bauthor{\bsnm{Molitor}, \binits{D.}}:
\batitle{Randomized {K}aczmarz for tensor linear systems}.
\bjtitle{BIT}
\bvolume{62},
\bfpage{1}--\blpage{24}
(\byear{2021})
\end{barticle}
\endbibitem

%%% 30
\bibitem{ttproduct}
\begin{botherref}
\oauthor{\bsnm{Lu}, \binits{C.Y.}}:
Tensor-Tensor Product Toolbox.
Available from \url{https://github.com/canyilu/Tensor-tensor-product-toolbox}
(2018)
\end{botherref}
\endbibitem

%%% 31
\bibitem{hansen2006deblurring}
\begin{bbook}
\beditor{\bsnm{Hansen}, \binits{P.C.}},
\beditor{\bsnm{Nagy}, \binits{J.G.}},
\beditor{\bsnm{O'{L}eary}, \binits{D.P.}} (eds.):
\bbtitle{Deblurring Images: Matrices, Spectra, and Filtering}.
\bpublisher{SIAM},
\blocation{Philadelphia}
(\byear{2006})
\end{bbook}
\endbibitem

\end{thebibliography}
%% if required, the content of .bbl file can be included here once bbl is generated
%%\input sn-article.bbl

%% Default %%
%%\input sn-sample-bib.tex%

\end{document}